\definecolor{darkblue}{rgb}{0.0, 0.0, 0.55}
\definecolor{bordeaux}{rgb}{0.34, 0.01, 0.1}
\newtheorem{thm}{Theorem}[section]
\newtheorem{cor}[thm]{Corollary}
\newtheorem{lem}[thm]{Lemma}
\newtheorem{prop}[thm]{Proposition}
\theoremstyle{definition}
\newtheorem{rem}[thm]{Remark}
\newtheorem{exa}[thm]{Example}
\DeclareMathOperator{\rk}{rk}
\DeclareMathOperator{\ran}{ran}
\DeclareMathOperator{\tr}{tr}
\DeclareMathOperator{\opm}{M}
\DeclareMathOperator{\oph}{H}
\DeclareMathOperator{\GL}{GL}
\DeclareMathOperator{\End}{End}
\DeclareMathOperator{\Hom}{Hom}
\DeclareMathOperator{\Ext}{Ext}
\def\C{\mathbb C}
\def\R{\mathbb R}
\def\N{\mathbb N}
\def\cA{\mathcal A}
\def\cB{\mathcal B}
\def\cC{\mathcal C}
\def\cD{\mathcal D}
\def\cH{\mathcal H}
\def\cL{\mathcal L}
\def\cM{\mathcal M}
\def\cO{\mathcal O}
\def\cZ{\mathcal Z}
\def\kk{{\mathbbm k}}
\def\ux{{\underline x}}
\def\uy{{\underline y}}
\def\uz{{\underline z}}
\def\uxs{{\underline x^*}}
\def\uA{{\underline A}}
\def\uB{{\underline B}}
\def\uX{{\underline X}}
\def\uY{{\underline Y}}
\def\uZ{{\underline Z}}
\newcommand*{\mtx}[1]{\opm_{#1}(\kk)}
\newcommand*{\mtxc}[1]{\opm_{#1}(\C)}
\newcommand*{\her}[1]{\oph_{#1}(\C)}
\newcommand{\Langle}{\mathop{<}\!}
\newcommand{\Rangle}{\!\mathop{>}}
\newcommand{\mx}{\Langle \ux\Rangle}
\newcommand{\px}{\kk\!\Langle \ux\Rangle}
\newcommand{\cx}{\C\!\Langle \ux\Rangle}
\newcommand{\py}{\kk\!\Langle \uy\Rangle}
\newcommand{\cxs}{\C\!\Langle \ux, \uxs \Rangle}
\LetLtxMacro\orgvdots\vdots
\LetLtxMacro\orgddots\ddots
\DeclareRobustCommand\vdots{%
  \mathpalette\@vdots{}%
}
\newcommand*{\@vdots}[2]{%
  \sbox0{$#1\cdotp\cdotp\cdotp\m@th$}%
  \sbox2{$#1.\m@th$}%
  \vbox{%
    \dimen@=\wd0 %
    \advance\dimen@ -3\ht2 %
    \kern.5\dimen@
    \dimen@=\wd2 %
    \advance\dimen@ -\ht2 %
    \dimen2=\wd0 %
    \advance\dimen2 -\dimen@
    \vbox to \dimen2{%
      \offinterlineskip
      \copy2 \vfill\copy2 \vfill\copy2 %
    }%
  }%
}
\DeclareRobustCommand\ddots{%
  \mathinner{%
    \mathpalette\@ddots{}%
    \mkern\thinmuskip
  }%
}
\newcommand*{\@ddots}[2]{%
  \sbox0{$#1\cdotp\cdotp\cdotp\m@th$}%
  \sbox2{$#1.\m@th$}%
  \vbox{%
    \dimen@=\wd0 %
    \advance\dimen@ -3\ht2 %
    \kern.5\dimen@
    \dimen@=\wd2 %
    \advance\dimen@ -\ht2 %
    \dimen2=\wd0 %
    \advance\dimen2 -\dimen@
    \vbox to \dimen2{%
      \offinterlineskip
      \hbox{$#1\mathpunct{.}\m@th$}%
      \vfill
      \hbox{$#1\mathpunct{\kern\wd2}\mathpunct{.}\m@th$}%
      \vfill
      \hbox{$#1\mathpunct{\kern\wd2}\mathpunct{\kern\wd2}\mathpunct{.}\m@th$}%
    }%
  }%
}
\title[Pointwise equivalences of nc polynomials]{Global structure behind pointwise equivalences of noncommutative polynomials}
\author{Eli Shamovich}
\address{Department of Mathematics, Ben-Gurion University of the Negev, Israel}
\email{shamovic@bgu.ac.il}
\thanks{Supported by the BSF Grant no. 2022235.}
\author{Jurij Vol\v{c}i\v{c}}
\address{Department of Mathematics, University of Auckland, New Zealand}
\email{jurij.volcic@auckland.ac.nz}
\thanks{Supported by the NSF grant DMS-1954709, and the BSF Grant no. 2022235.}
\date{\today}
\keywords{Noncommutative polynomial, rank equivalence, isospectrality, pointwise similarity, stable association, intertwiner, local-global principle}
\subjclass[2020]{47A56, 16S10, 47A13, 15A24, 16U30}
\begin{document}

\begin{abstract}
This paper investigates the interplay between local and global equivalences on noncommutative polynomials, the elements of the free algebra. When the latter are viewed as functions in several matrix variables, a local equivalence of noncommutative polynomials refers to their values sharing a common feature point-wise on matrix tuples of all dimensions, such as rank-equivalence (values have the same ranks), isospectrality (values have the same spectrum), and pointwise similarity (values are similar). On the other hand, a global equivalence refers to a ring-theoretic relation within the free algebra, such as stable association or (elementary) intertwinedness. This paper identifies the most ubiquitous pairs of local and global equivalences. Namely, rank-equivalence coincides with stable association, isospectrality coincides with both intertwinedness and transitive closure of elementary intertwinedness, and pointwise similarity coincides with equality. Using these characterizations, further results on spectral radii and norms of values of noncommutative polynomials are derived.
%
\end{abstract}

\maketitle

\tableofcontents

\section{Introduction}

The free algebra $\px = \kk\!\Langle x_1,\ldots,x_n \Rangle $ over a field $\kk$ is, in many ways, a good noncommutative generalization of the polynomial ring. The algebraic properties of the free algebra and, more generally, free ideal rings were extensively studied by Cohn \cite{cohn}, Amitsur (see, for example, \cite{Amitsur-null, Amitsur-rat_iden}), and others (see, for example, \cite{ber69, FR, Schofield-book}). The main goal of this paper is to study the algebraic properties through a more geometric and analytic approach that originates in noncommutative analysis. Broadly speaking, noncommutative analysis is an area of analysis dealing with operator algebras and related concepts. Recently, a fruitful approach to operator algebras (both self-adjoint and not) was to view them as functions on the space of their representations. This idea is, of course, not new and goes back to Gelfand. In the theory of $C^*$-algebras, noncommutative functions first appeared in the work of Takesaki \cite{Takesaki} that was extended by Bichteler \cite{Bichteler}. More recently, this point of view was enhanced and applied to the study of operator systems and noncommutative convexity by Davidson and Kennedy \cite{DavidsonKennedy_choquet}. For our purposes here, it suffices to consider noncommutative functions on the noncommutative affine space. Let 
$$\cM^n=\bigcup_{k\in\N} \mtx{k}^n$$
be the set of all $n$-tuples of square matrices over $\kk$. This space, clearly, parametrizes all finite-dimensional representations of $\px$. An element $f \in \px$ gives rise to a function on $\cM^n$ that takes values in $\cM^1$. Moreover, $f$ satisfies the following three properties:
\begin{itemize}
    \item For all $k \in \N$, $f(\mtx{k}^n) \subseteq \mtx{k}$;
    \item For all $\uX \in \mtx{k}^n$ and $\uY \in \mtx{\ell}^n$, define 
    \[
    \uX \oplus \uY = \left(\begin{pmatrix} X_1 & 0 \\ 0 & Y_1 \end{pmatrix}, \ldots, \begin{pmatrix} X_n & 0 \\ 0 & Y_n \end{pmatrix} \right) \in \mtx{k+\ell}^n.
    \]
    Then, $f(\uX \oplus \uY) = f(\uX) \oplus f(\uY)$.
    \item For all $\uX \in \mtx{k}^n$ and all $S \in \GL_k(\kk)$, define
    \[
    S^{-1} \uX S = \left( S^{-1} X_1 S, \ldots, S^{-1} X_n S \right) \in \mtx{k}^n.
    \]
    Then, $f(S^{-1} \uX S) = S^{-1} f(\uX) S$.
\end{itemize}
These properties in the case of elements of $\px$ simply encode the behavior of representations. However, these properties (for $\kk = \C$) were introduced by Taylor to create algebras of functions that may give rise to functional calculus for general tuples of operators \cite{Taylor-frame, Taylor-ncfunc} (see also \cite{Luminet-PI, Luminet-func}).  Taylor's work was greatly developed in recent years by many researchers (see the books \cite{AMY-book,BallBolotnikov-book,KVV10}). Moreover, Popescu has developed a notion of a noncommutative function that arose naturally from his study of dilation theory and operator algebras generated by row isometries (see, for example, \cite{Popescu-funcI,Popescu-funcII}). Popescu's theory fits nicely into the framework of noncommutative analysis (see \cite{JuryMartin-fatou,JuryMartin-lebesgue,SSS1,SampatShalit25} for examples of applications). In all this theory, the free algebra plays a role similar to that of the polynomial ring in the theory of analytic functions. Therefore, we will call elements of $\px$ \textit{noncommutative polynomials} or simply polynomials, if there can be no confusion.

Another natural area of applications of noncommutative functions is free probability. In fact, Voiculescu has independently discovered noncommutative functions \cite{Voiculescu-subordination,Voiculescu-quest1,Voiculescu-quest2}. One of the main reasons was the study of operator-valued free probability. One of Voiculescu's key results is a free version of a central limit theorem \cite{Voiculescu-quasi_diag}. Roughly speaking, the result says that the limit of the expected value of the trace of a noncommutative polynomial in independent GUE matrices converges to the trace of the polynomial applied to free semicirculars as the size of the matrices tends to infinity. This result was extended by Haagerup and Thorbj{\o}rnsen \cite{HaagThor-ext} to almost everywhere convergence of norms. Later, this result was extended to noncommutative rational functions (elements of the free skew field) by Mai, Speicher, and Yin \cite{MaiSpeYin}, and the ranks of their evaluations were used to investigate unavoidable atoms in noncommutative distributions in \cite{arizmendi}.

From a more geometric point of view, one can consider various types of varieties in $\cM^n$ and its subdomains. In \cite{HKV} and \cite{HKV0}, Helton, Klep, and the second author have studied determinantal varieties. Namely, given $f \in \px$, one can define $\cZ(f) = \{\uX \in \cM^n \mid \det f(\uX) = 0\}$. The authors proved that for $f, g \in \px$, if $\cZ(f) = \cZ(g)$, then they admit factorization $f = f_1 \cdots f_k$ and $g = g_1 \cdots g_k$ in $\px$, such that for each $1 \leq i \leq k$, there exists $1 \leq j \leq k$, such that $f_i$ and $g_j$ are stably associated. Cohn introduced stable association to study the problem of factorizations in $\px$. We say that two polynomials $f, g \in \px$ are \emph{stably associated}, if there exist $P, Q \in \GL_2(\px)$, such that
\[
\begin{pmatrix} g & 0 \\ 0 & 1 \end{pmatrix} = P \begin{pmatrix} f & 0 \\ 0 & 1 \end{pmatrix} Q.
\]
Another type of noncommutative varieties is the ``hard zeroes'' varieties. Namely, for $f \in \px$, we consider the collection of points where $f$ vanishes in the strong sense. In \cite{SSS1}, such varieties were studied in the unit ball as geometric invariants to classify certain operator algebras arising from noncommutative interpolation. Varieties of hard zeroes naturally correspond to two-sided ideals. A homogeneous Nullstellensatz was obtained in \cite{SSS1}. Lastly, one can consider directional varieties, which naturally correspond to left (or right) ideals. A natural Nullstellensatz is the so-called Bergman Nullstellensatz \cite{HelMcC-positivss}. Similar varieties appear in the analytic context to study factorizations in the free semigroup algebra \cite{JMS-bso}.

The goal of this paper is to study certain relations between polynomials described in terms of their values on $\cM^n$. Throughout the paper, we assume that the field $\kk$ is algebraically closed of characteristic 0. We say that noncommutative polynomials $f,g$ are:
\begin{enumerate}[(i)]
    \item \emph{rank-equivalent} if $\rk f(\uX)=\rk g(\uX)$ for all $\uX\in\cM^n$;
    \item \emph{isospectral} if the eigenvalues of $f(\uX)$ and $g(\uX)$ coincide for all $\uX\in\cM^n$;
    \item \emph{pointwise similar} if the matrices $f(\uX)$ and $g(\uX)$ are similar for all $\uX\in\cM^n$;
    \item ($\kk=\C$) \emph{pointwise the same norm $\|\cdot\|$} if $\|f(\uX)\|=\|g(\uX)\|$ for all $\uX\in\cM^n$.
\end{enumerate}
Note that the first condition can be thought of as the condition on classical analytic functions to have the same zeroes of the same order. The first condition is also related to the more general study of rank-stability of noncommutative polynomial equations \cite{beeri,ElekGrab}.
The second condition is, in a sense, generic similarity. The second condition first appeared in \cite{vol20}, where a noncommutative version of Bertini's theorem from algebraic geometry was obtained. The third condition is, of course, stronger than the second one. In fact, such questions arise from time to time in noncommutative analysis. Namely, given two elements of an operator algebra and a collection of representations that separate points, can we deduce that the two elements are similar or unitarily equivalent in the algebra, if their images under each one of our representations are similar or unitarily equivalent, respectively? Lastly, the last condition is metric. It is analogous to two analytic functions having the same modulus. It is well known that in this case, the two analytic functions differ by a multiplication by a unimodular scalar. Surprisingly, this is also true for noncommutative polynomials, as we will see.

As we have seen, Taylor introduced noncommutative functions to study a notion of joint spectrum and functional calculus for arbitrary tuples of operators on a Banach/Hilbert space. Taylor's definition of the joint spectrum has some drawbacks, however. Firstly, it deals with categories of Fr\'echet modules over certain topological algebras, and Taylor builds a sophisticated cohomological machinery to deal with these categories \cite{Taylor-homology}. Secondly, both in the case of commuting operators and the case of an arbitrary tuple of matrices, the spectrum and the functional calculus do not work as desired (see also \cite{Luminet-PI}). A more naive approach is to study the singularity loci of pencils in order to understand the ``joint spectrum''. Namely, one can think of the spectrum of an operator $T$ as the singularities in the complex plane of the analytic operator-valued function $z \mapsto (zI-T)^{-1}$. Homogenizing we get the function $(z I - w T)^{-1}$. More generally, the projective spectrum of a tuple of operators was considered by Yang and his collaborators \cite{CadeYang, Yang-proj_spec}. From the noncommutative analysis perspective, a natural notion of the joint spectrum of a tuple of matrices $A_1,\ldots,A_n \in \mtxc{k}$ is the singularity set of the pencil $I - \sum_{j=1}^n x_j A_j$ or its homogenization $x_0 I - \sum_{j=1}^n x_j A_j$. The latter relates to the noncommutative projective space that appeared first in the work of Voiculescu \cite{Voiculescu-quest2} and was recently used to solve a joint similarity problem for tuples of matrices by Derksen, Klep, Makam, and the second author \cite{DKMV}.  The former relates to the domains of definition of noncommutative rational functions and was studied in \cite{KlepVolcic-loci_rat}. More recently, realizations with operator coefficients were introduced by Augat, Martin, and the first author in \cite{AugMarS}. Therefore, understanding the local behaviour of noncommutative polynomials and pencils on $\cM^n$ is a natural first step in the study of joint spectra of arbitrary tuples of operators.

The following example lists some non-trivial instances of our relations.

\begin{exa}
Polynomials $f_1=xy+1$ and $g_1=yx+1$ are both rank-equivalent and isospectral, but not pointwise similar:
$$f_1\left(
\begin{pmatrix}
1&0\\0&0
\end{pmatrix},
\begin{pmatrix}
0&1\\0&0
\end{pmatrix}
\right)=\begin{pmatrix}
1&1\\0&1
\end{pmatrix},\quad
g_1\left(
\begin{pmatrix}
1&0\\0&0
\end{pmatrix},
\begin{pmatrix}
0&1\\0&0
\end{pmatrix}
\right)=\begin{pmatrix}
1&0\\0&1
\end{pmatrix}.$$
Polynomials $f_2=xy$ and $g_2=yx$ are isospectral, but not rank-equivalent:
$$f_2
\left(\begin{pmatrix}
1&0\\0&0
\end{pmatrix},
\begin{pmatrix}
0&1\\0&0
\end{pmatrix}\right)
=\begin{pmatrix}
0&1\\0&0
\end{pmatrix},\quad
g_2\left(\begin{pmatrix}
1&0\\0&0
\end{pmatrix},\begin{pmatrix}
0&1\\0&0
\end{pmatrix}\right)
=\begin{pmatrix}
0&0\\0&0
\end{pmatrix}.
$$
Polynomials $f_3=xyxy+xy+x$ and $g_3=xy^2x+xy+x$ are rank-equivalent, but not isospectral:
$$f_3
\left(\begin{pmatrix}
1&0\\0&0
\end{pmatrix},\begin{pmatrix}
0&1\\1&0
\end{pmatrix}\right)
=\begin{pmatrix}
1&1\\0&0
\end{pmatrix},\quad
g_3
\left(\begin{pmatrix}
1&0\\0&0
\end{pmatrix},\begin{pmatrix}
0&1\\1&0
\end{pmatrix}\right)
=\begin{pmatrix}
2&1\\0&0
\end{pmatrix}.
$$

The claimed relations may be verified using elementary means; from the perspective of this paper, they are consequences of relations $f_1x=xg_1$, $f_2x=xg_2$, $f_3(yx+1)=(xy+1)g_3$ 
and Theorems \ref{t:master}, \ref{t:st_assoc} below.
\end{exa}

We say that two polynomials $f,g \in \px$ are \emph{intertwined} if there exists a nonzero $a \in \px$ such that $f a = a g$. This relation first appeared in \cite{vol20}. The main result of this paper is the following theorem. Note that the reverse implications in the statement are trivial.

\begin{thm}\label{t:master}
Let $f,g$ be noncommutative polynomials.
\begin{enumerate}[(i)]
\item $f$ and $g$ are rank-equivalent if and only if they are stably associated (Theorem \ref{t:rankequiv}).
\item $f$ and $g$ are isospectral if and only if they are intertwined (Theorem \ref{t:isospec}).
\item $f$ and $g$ are pointwise similar if and only if they are equal (Theorem \ref{t:similar}).
\item $f$ and $g$ have pointwise the same operator/Frobenius norm if and only if they are equal up to scaling by a constant of modulus 1 (Theorem \ref{t:realmix}).
\end{enumerate}
\end{thm}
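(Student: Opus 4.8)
Throughout, the four reverse implications are immediate and I will not dwell on them: a stable association, an intertwiner $a\neq0$, an equality, or an equality up to a unimodular scalar forces the corresponding pointwise relation on the Zariski-dense set where the relevant witness is invertible, and this extends to all of $\cM^n$ because ranks, the coefficients of $\det(tI-f(\uX))$, and the Gram quantities $\tr(f(\uX)^*f(\uX))$ are regular functions of the entries of $\uX$. For the forward directions the recurring tool is the block substitution $X_j\mapsto\left(\begin{smallmatrix}Y_j&T_j\\0&Z_j\end{smallmatrix}\right)$, under which $f(\uX)=\left(\begin{smallmatrix}f(\uY)&\Delta_f\\0&f(\uZ)\end{smallmatrix}\right)$ with $\Delta_f=\Delta_f(\uY,\uZ)[T_1,\dots,T_n]$ linear in the $T_j$. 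For Part~(i) I would linearize: by Higman's trick $f$ is stably associated to a monic pencil $L_f=I_N+\sum_jA_jx_j$ with $A_j$ scalar matrices, and $\rk f(\uX)=\rk L_f(\uX)-(N-1)k$ for $\uX\in\mtx{k}^n$. So rank-equivalence of $f,g$ forces $\rk L_f(\uX)-\rk L_g(\uX)$ to be a fixed multiple of $k$; padding the smaller pencil with an identity block (which keeps it monic and preserves the stable-association class) we may assume $L_f,L_g$ are of equal size and have the same rank function on $\cM^n$. It then suffices to show that $\uX\mapsto\rk L(\uX)$ determines a monic pencil up to stable association. The base case, $L$ full everywhere, forces $\det L(\uX)$ to be a nowhere-vanishing polynomial in the entries, hence a nonzero constant, so $L\in\GL_N(\px)$ and $L\sim_s1$; in general one invokes the structure theory of linear pencils over a fir, in the guise of the free-locus results of \cite{HKV} but refined to track the entire filtration $\{\uX:\rk L(\uX)\le N-r\}$ rather than only $\{\det L=0\}$, to peel off a full factor and induct on $N$. \textbf{This is where the main obstacle of the paper lies}: converting the purely pointwise datum ``equal rank everywhere'' into an honest equivalence over $M_\bullet(\px)$ requires that finite-dimensional matrix evaluations be rich enough to recover the inner rank and the block structure of a pencil --- a rank-faithfulness statement for $\px$ of the kind underlying the noncommutative-rank results (\cite{DKMV} and related work) and the embedding of $\px$ into its universal skew field.

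For Part~(ii), given an intertwiner $fa=ag$ with $a\neq0$, on $\{\det a(\uX)\neq0\}$ the matrices $f(\uX),g(\uX)$ are conjugate, hence isospectral, and isospectrality extends to all of $\cM^n$ by the regularity remark. For the converse, isospectrality amounts to the coincidence of the determinantal hypersurfaces $\cZ(f-\lambda)=\cZ(g-\lambda)$ for every $\lambda\in\kk$. By the noncommutative Bertini-type factorization results of \cite{vol20} and \cite{HKV}, each such coincidence yields, for fixed $\lambda$, matching factorizations of $f-\lambda$ and $g-\lambda$ into pairwise stably associated atoms, and via the linearizations of Part~(i) this becomes the statement that the companion pencils $L_{f-\lambda},L_{g-\lambda}$ are cogredient. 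The remaining task --- and \textbf{the technical heart of (ii)} --- is to pass from this $\lambda$-by-$\lambda$ information to a single $a\in\px$ with $fa=ag$; I expect this to go by a Zariski-density argument in $\lambda$, using that an intertwiner is automatically shift-invariant ($(f-\lambda)a=a(g-\lambda)\iff fa=ag$) so that the whole one-parameter family of relations must be harmonized simultaneously. The same bookkeeping of chains of common atoms also yields the refinement that isospectrality equals the transitive closure of elementary intertwinedness.

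For Part~(iii), pointwise similarity implies isospectrality, hence an intertwiner $fa=ag$ by (ii); it also implies that $(f-\lambda)^m$ and $(g-\lambda)^m$ are rank-equivalent for all $\lambda\in\kk$ and $m\geq1$, hence stably associated by (i). To force $f=g$ I would specialize the block substitution to equal diagonal blocks $\uY=\uZ$, so that $f(\uX)=\left(\begin{smallmatrix}f(\uY)&Df(\uY)[T]\\0&f(\uY)\end{smallmatrix}\right)$ with $Df$ the directional derivative. Pointwise similarity of this matrix and its $g$-counterpart, combined with $f(\uY)\sim g(\uY)$, translates into a cohomological constraint: after conjugating by a matrix intertwining $f(\uY)$ and $g(\uY)$, the difference $Df(\uY)[T]-Dg(\uY)[T]$ lies in the range of the commutator map $S\mapsto[f(\uY),S]$, for all $\uY$ and $T$. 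A degree induction, fed by the stable associations of $(f-\lambda)^m$ and $(g-\lambda)^m$ and by the fact that the constant terms and the leading parts must already agree, then propagates this down to $f=g$ (equivalently, the intertwiner $a$ is forced to be a nonzero scalar); coupling the cohomological constraint with the rank data is the subtle point.

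For Part~(iv), the Frobenius case is a direct computation: $\|f(\uX)\|_F^2=\|g(\uX)\|_F^2$ for all $\uX$ reads, on writing $f=\sum_wc_ww$ and $g=\sum_wd_ww$ over words $w$, as
\[
\sum_{w,w'}\bigl(\overline{c_w}\,c_{w'}-\overline{d_w}\,d_{w'}\bigr)\,\tr\!\bigl(w(\uX)^*w'(\uX)\bigr)=0 .
\]
The functions $\uX\mapsto\tr(w(\uX)^*w'(\uX))$ are linearly independent over $\C$ --- test on a generic $\uX$, where they become distinct monomials in the entries of $\uX$ and their complex conjugates --- so $\overline{c_w}\,c_{w'}=\overline{d_w}\,d_{w'}$ for all words $w,w'$; choosing $w_0$ with $c_{w_0}\neq0$ and setting $c=d_{w_0}/c_{w_0}$, the case $w=w'=w_0$ gives $|c|=1$ while the cases $w'=w_0$ give $d_w=c\,c_w$, i.e. $g=cf$. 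For the operator norm I would exploit instead the coincidence $\{\uX:\|f(\uX)\|\leq1\}=\{\uX:\|g(\uX)\|\leq1\}$ of the unit sublevel sets --- the matrix-convex sets cut out by the Hermitian pencils $\left(\begin{smallmatrix}I&f\\f^*&I\end{smallmatrix}\right)$ and $\left(\begin{smallmatrix}I&g\\g^*&I\end{smallmatrix}\right)$ --- which, by rigidity of such sets (a noncommutative Positivstellensatz), should force $g=uf$ for a unitary $u\in\px$, i.e. $u$ a modulus-one scalar. Making this last reduction precise is the one extra ingredient needed in (iv).
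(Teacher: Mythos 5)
Your proposal is an outline that correctly locates the difficulties but does not resolve them, and in several places the sketched route diverges from the paper in ways that would not go through as stated. For part (i), you flag the ``main obstacle'' yourself — converting pointwise rank data into an equivalence over $\opm_\bullet(\px)$ — but the peeling-off-a-full-factor induction you gesture at is not how the paper closes this gap. The paper's mechanism is entirely different: it realizes $R/Rf$ and $R/Rg$ as objects of the abelian category of finitely presented torsion modules, invokes Bongartz's theorem to separate non-isomorphic objects by $\dim_\kk\Hom_R(-,V)$ for a single torsion module $V$, reads that off as a kernel-dimension discrepancy at an infinite-dimensional operator tuple (Lemma \ref{l:infrank}), and then compresses to finite matrices via the nontrivial padding estimate of Proposition \ref{p:pad}. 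None of this is implied by your linearization sketch, and the free-locus results you cite track only $\{\det L=0\}$, not the full rank filtration.

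For part (iii) the proposed derivative/cohomology argument is a genuinely different approach, and the obstacle it must overcome is sharper than you acknowledge: Example \ref{exa:unexpected} in the paper exhibits $f\neq g$ with $f-\lambda$ and $g-\lambda$ stably associated for \emph{every} $\lambda\in\kk$, so ``stable association of $(f-\lambda)$ for all $\lambda$ plus a degree induction'' cannot suffice, and it is not clear the directional-derivative constraint you write down actually rules out that example. The paper instead reduces to non-composite $f,g$, extracts from the module isomorphism $R/Rf\oplus R/R(f-\beta)\cong R/Rg\oplus R/R(g-\beta)$ a single polynomial $a$ making $(f-\lambda)a=a(g-\lambda)$ comaximal for \emph{all} $\lambda$ simultaneously, and then invokes the Smith-normal-form argument of Proposition \ref{p:irr_similar}(b) (which converts the pointwise-in-$\lambda$ comaximality into comaximality over $\kk[t]\otimes R$, whence $f=g$ by Cohn). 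Finally, in part (iv) the linear-independence claim for the trace functions $\uX\mapsto\tr(w(\uX)^*w'(\uX))$ is not justified by ``they become distinct monomials'' — trace is cyclically invariant, so these are not monomials but sums, and their linear independence is exactly the content of the tracial Nullstellensatz the paper cites; the paper then needs the additional combinatorial observation that words $uv^*$ with $u,v$ nonconstant and $*$-free are cyclically inequivalent unless equal. Your operator-norm reduction via matrix-convex rigidity is left as a hope; the paper instead passes through spectral radii of $ff^*$ and $gg^*$ on Hermitian tuples and the isospectrality machinery of Section~\ref{sec:eig}.
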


While Theorem \ref{t:master} sums up the main contributions of the paper, we also provide additional insight into these local equivalences through examples, counterexamples, and their finer features.
Two problems are left open. We say that $f$ and $g$ are \emph{operator isospectral} if $f(\uX)$ has the same spectrum as $g(\uX)$ for every tuple $\uX \in B(\cH)^n$, where $\cH$ is a separable Hilbert space. In Examples \ref{ex:not_op_isosp} and \ref{ex:opspec}, we show that this relation is neither isospectrality nor equality. It would be interesting to understand this relation algebraically. Another intriguing question is to extend some of these results to certain operator algebras. For example, it stands to reason that Theorem \ref{t:master}(iv) may admit a generalization to algebras of noncommutative functions on operator space balls or more general matrix convex sets. Furthermore, the ring of germs of noncommutative functions was shown to be a semifir (semi-free ideal ring) by Klep, Vinnikov, and the second author \cite{KlVinVol-germs}, meaning that it admits a well-behaved factorization theory, which is one of the main tools in this paper.
It would be interesting to see whether the results on isospectrality and stable association can be extended to this ring.

\section{Preliminaries on factorization of noncommutative polynomials}\label{sec:prelim}

This paper relies heavily on the profound and comprehensive theory of factorization in free algebras (and related rings), of which P. M. Cohn was the chief architect \cite{cohn}. In this section, we review some of the concepts and results from the factorization theory that are regularly applied throughout the paper.

Let $\kk$ be an algebraically closed field of characteristic 0,\footnote{
We are mainly interested in $\kk=\C$; some of the statements likely remain valid for more general fields.
} 
and let $x_1,\dots,x_n$ be freely noncommuting variables. Let $\px$ be the free associative $\kk$-algebra generated by $x_1,\dots,x_n$. Throughout the paper (especially within the proofs), we regularly denote $R=\px$. Its elements are called \emph{noncommutative polynomials}. We typically write $x_1=x$ and $x_2=y$ in examples with two variables. 
This paper revolves around matrix (and sometimes operator) evaluations of noncommutative polynomials. If $\uX=(X_1,\dots,X_n)\in\mtx{k}^n$, then $f(\uX)\in\mtx{k}$ is obtained in a natural way, by replacing the $x_j$ in $f$ by $X_j$ (and the constant term with the corresponding scalar multiple of the identity matrix).

Two polynomials $f,g\in\px $ are \emph{stably associated} \cite[Section 0.5]{cohn} if there exist $P,Q\in\GL_2(\px)$ such that $g\oplus 1=P(f\oplus 1)Q$. 
This notion makes up for the fact that the usual association is very restrictive because there are no invertible elements in $\px$ besides constants; as there is a plethora of invertible $2\times 2$ matrices over the free algebra, stable association is more flexible.
More generally, $F\in\px^{d\times d}$ and $G\in\px^{e\times e}$ are stably associated if there exist $P,Q\in\GL_{d+e}(\px)$ such that $G\oplus I_d=P(F\oplus I_e)Q$.
Observe that stably associated polynomials are clearly rank-equivalent. 
Stable association is indispensable for investigating factorization in $\px$. For example, while a complete factorization of a noncommutative polynomial is not unique in the strict sense (e.g., $x(yx+1)=(xy+1)x$), it is known that a complete factorization is unique up to permutation and stable association of the irreducible factors \cite[Proposition 3.2.9]{cohn}.

Next, $f,g\in\px$ are
\emph{right/left coprime} if they do not have a nonconstant common right/left factor in $\px$,
and they are \emph{right/left comaximal} if the right/left ideal in $\px$ generated by them equals $\px$.
A relation $fa=bg$ in $\px$ is \emph{coprime} if $f,b$ are left coprime and $a,g$ are right coprime, 
and \emph{comaximal} if $f,b$ are right comaximal and $a,g$ are left comaximal.
By \cite[Corollary 3.1.4]{cohn}, a relation in $\px$ is coprime if and only if it is comaximal, which we regularly use in this paper.

\begin{thm}{\cite[Corollary 0.5.5, Proposition 0.5.6 and Corollary 3.1.4]{cohn}}
\label{t:st_assoc}
The following are equivalent for $f,g\in\px$: 
\begin{enumerate}[(i)]
\item $f$ and $g$ are stably associated;
\item the left $\px$-modules $\px/\px\cdot f$ and $\px/\px\cdot g$ are isomorphic;
\item there is a coprime/comaximal relation $fa=bg$ in $\px$.
\end{enumerate}
\end{thm}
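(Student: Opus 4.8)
The plan is to prove the cycle (i)$\Rightarrow$(ii)$\Rightarrow$(iii)$\Rightarrow$(i); the statement is classical and due to Cohn, but it is worth spelling out since the interplay of cokernels, lifts of free resolutions, and comaximal relations recurs throughout. Write $R=\px$, a domain and a free ideal ring, and dispose first of the degenerate case: if $f=0$ or $g=0$, then each of (i)--(iii) forces $f=g=0$, so assume $f,g\neq 0$. For (i)$\Rightarrow$(ii): given $g\oplus 1=P(f\oplus 1)Q$ with $P,Q\in\GL_2(R)$, regard $R^{1\times 2}$ as a left $R$-module on which matrices act by right multiplication; since $P$ is invertible, $R^{1\times 2}(g\oplus 1)=R^{1\times 2}P(f\oplus 1)Q=R^{1\times 2}(f\oplus 1)Q$, so right multiplication by $Q$ descends to an isomorphism of the cokernels, which, the matrices being diagonal, are exactly $R/Rf$ and $R/Rg$.

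For (ii)$\Rightarrow$(iii): since $R$ is a domain, right multiplication by $f$, resp.\ $g$, is injective, giving length-one free resolutions $0\to R\xrightarrow{\cdot f}R\xrightarrow{\pi_f}R/Rf\to 0$ and $0\to R\xrightarrow{\cdot g}R\xrightarrow{\pi_g}R/Rg\to 0$. Given an isomorphism $\phi\colon R/Rf\to R/Rg$, lift $\phi$ and $\phi^{-1}$ to morphisms of these resolutions (possible since $R$ is projective over itself); the components of the lifts, read as right multiplications, give $a,a',b,b'\in R$, with commutativity of the relevant squares yielding $fa=bg$ and $ga'=b'f$. The composite of the two lifts is a lift of $\mathrm{id}_{R/Rf}$, so its middle component $\cdot(aa')$ satisfies $aa'-1\in Rf$, say $aa'=1+cf$; cancelling $f$ on the right in
\[
(1+fc)f=f+fcf=f(aa')=(fa)a'=(bg)a'=b(ga')=b(b'f)=(bb')f
\]
gives $bb'=1+fc$, hence $1\in fR+bR$. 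Combined with $Ra+Rg=R$, which follows from surjectivity of $\phi$ on a generator, this exhibits $fa=bg$ as a comaximal relation; by \cite[Corollary 3.1.4]{cohn} it is then also coprime.

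The implication (iii)$\Rightarrow$(i) is where the real work lies, and I expect its completion step to be the main obstacle. From a comaximal relation $fa=bg$, the row $(f,\,{-b})$ is right comaximal; since $R$ is a fir, finitely generated submodules of free $R$-modules are free, and this lets one complete $(f,\,{-b})$ to an invertible $U\in\GL_2(R)$ with first row $(f,\,{-b})$. Then $U\binom{a}{g}=\binom{0}{e}$ with $e:=U_{21}a+U_{22}g$, and since $a,g$ are left comaximal and $R$ is a domain, $e$ is a unit (it is left-invertible, as $sa+tg=1$ shows after multiplying $\binom{a}{g}=U^{-1}\binom{0}{e}$ by a suitable row, and a left inverse of a nonzero element of a domain is a two-sided inverse). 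The relation $\binom{a}{g}=U^{-1}\binom{0}{e}$ identifies the second column of $U^{-1}$ as $\binom{a}{g}e^{-1}$, and using $fa=bg$ one factors $(f\oplus 1)U^{-1}$ as a product of unitriangular matrices times $\operatorname{diag}(1,g)$ times $\operatorname{diag}(1,e^{-1})$; conjugating $\operatorname{diag}(1,g)$ by the permutation $\left(\begin{smallmatrix}0&1\\1&0\end{smallmatrix}\right)$ to get $g\oplus 1$ and rearranging produces $P,Q\in\GL_2(R)$ with $g\oplus 1=P(f\oplus 1)Q$. The only genuinely nontrivial ingredient is the completion of the unimodular row $(f,\,{-b})$ to an invertible matrix; this is precisely where the free ideal ring structure of $\px$ enters in an essential way.
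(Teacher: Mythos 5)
The paper does not actually prove this theorem; it states it as a citation to Cohn's book (Corollary 0.5.5, Proposition 0.5.6, Corollary 3.1.4). Your proposal correctly reconstructs the standard Cohn-style argument from first principles. The (i)$\Rightarrow$(ii) and (ii)$\Rightarrow$(iii) steps (lifting the module isomorphism along the length-one free resolutions, composing with the lift of the inverse, and reading off the comaximality from $aa'-1\in Rf$ and $bb'-1\in fR$) are sound, and your identification of the genuine content of (iii)$\Rightarrow$(i) is accurate: the only nontrivial input is that the unimodular row $(f,-b)$ completes to an invertible $2\times 2$ matrix, which over the fir $R=\px$ follows from the kernel of the split surjection $R^2\twoheadrightarrow R$, $\binom{x}{y}\mapsto fx-by$, being stably free hence free of rank one. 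The subsequent bookkeeping -- $U\binom{a}{g}=\binom{0}{e}$ with $e$ a left-invertible (hence, in a domain, two-sided invertible) element, the identification $fc-bd=1$ from $UU^{-1}=I$, and the triangular factorization of $(f\oplus 1)U^{-1}$ into $\bigl(\begin{smallmatrix}1&b\\0&1\end{smallmatrix}\bigr)\bigl(\begin{smallmatrix}1&0\\d&1\end{smallmatrix}\bigr)(1\oplus g)(1\oplus e^{-1})$ -- all checks out and yields $g\oplus 1=P(f\oplus 1)Q$ after conjugating $1\oplus g$ by the swap. This is, in substance, Cohn's own argument unpacked; the paper simply defers to the reference.
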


Throughout the paper, we interchangeably apply the interpretations of stable association from Theorem \ref{t:st_assoc}; while the primary definition above is most directly related to rank equivalence, the others are more convenient in proofs. While not essential to this paper, let us mention that it is possible to generate all pairs of stably associated polynomials using \emph{continuant polynomials} \cite[Section 2.7]{cohn}.

Let $a\in\px$. The \emph{(left) eigenring} \cite[Section 0.6]{cohn} of $a$ is
\begin{align*}
E(a)\,:&=\End_{\px}\big(\px/\px\cdot a\big) \\ &\cong\{g\in\px\colon fa=ag \text{ for some }f\in\px \}\Big/ \px\cdot a.
\end{align*}
The structure of eigenrings in a free algebra over an algebraically closed field is well-understood \cite[Section 4.6]{cohn}.

\begin{thm}{\cite[Corollaries 4.6.10 and 4.6.13]{cohn}}\label{t:eigering}
Let $a\in\px$.
\begin{enumerate}[(a)]
    \item $E(a)$ is a finite-dimensional over $\kk$.
    \item If $a$ is irreducible then $E(a)=\kk$.
\end{enumerate}
\end{thm}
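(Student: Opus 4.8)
We identify $E(a)$ with $\{g\in R\colon ag\in Ra\}/Ra$, an endomorphism of $R/Ra$ being recovered from the image $\bar g$ of $\bar 1$ (this is the $g\in R$ with $a\bar g=0$ in $R/Ra$, i.e.\ $ag=qa$ for some $q\in R$; equivalently, apply $\Hom_R(-,R/Ra)$ to $0\to R\xrightarrow{\cdot a}R\to R/Ra\to0$ and use $\Hom_R(R,R/Ra)\cong R/Ra$). The plan is to prove (a) by reducing such $g$ modulo $Ra$ to bounded degree via a leading-term (weak-algorithm) argument, and then to deduce (b) from (a) together with the uniqueness of noncommutative factorizations.

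For part (a), fix a well-order on the free monoid $\mx$ that refines the length and is compatible with concatenation on both sides (degree-then-lexicographic works); since $\mx$ is cancellative, the leading monomial is multiplicative, $\mathrm{lt}(pq)=\mathrm{lt}(p)\mathrm{lt}(q)$ for $p,q\neq0$. Put $d=\deg a$ and let $g\neq0$ satisfy $ag=qa$. Writing $\mu=\mathrm{lt}(a)$, $\nu=\mathrm{lt}(g)$, $\lambda=\mathrm{lt}(q)$, the identity $ag=qa$ forces the word equation $\mu\nu=\lambda\mu$ with $|\lambda|=|\nu|$. If $\deg g\geq d$, then $\mu\nu$ has length $\geq 2d$ and contains $\mu$ as a prefix and as a non-overlapping suffix, hence $\mu$ is a suffix of $\nu$, say $\nu=\nu_0\mu$. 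Then $g':=g-\mathrm{lc}(g)\mathrm{lc}(a)^{-1}\,\nu_0 a$ has the same image in $E(a)$, still satisfies $ag'\in Ra$, and has strictly smaller leading monomial. As the order is a well-order, iterating produces a representative of degree $<d$; such a representative is unique in its class (a nonzero element of $Ra$ has degree $\geq d$). Hence $\dim_\kk E(a)\leq\#\{\text{monomials of degree }<d\}<\infty$.

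For part (b), by (a) the algebra $E(a)$ is finite dimensional over $\kk$ with $\kk$ central, and since $\kk$ is algebraically closed, such an algebra that is a domain must equal $\kk$ (every element generates a finite, hence trivial, field extension of $\kk$); so it suffices to show $E(a)$ has no zero divisors. Let $\bar g_1,\bar g_2$ be nonzero, with $ag_i=q_ia$ and $g_i\notin Ra$; then $q_i\neq0$ and $q_i\notin aR$ (if $q_i=aq$, cancelling $a$ on the left in $ag_i=aqa$ gives $g_i=qa\in Ra$). If $g_1g_2=ra\in Ra$, then $a\,g_1g_2=(ag_1)g_2=q_1q_2a$ while also $a\,g_1g_2=ara$, so cancelling $a$ on the right gives $ar=q_1q_2$, i.e.\ $q_1q_2\in aR$; this contradicts $q_1,q_2\notin aR$ once one knows that the one-sided ideal generated by an atom is completely prime (equivalently: an atom $a$ satisfies $a\mid uv$ on the left only if $a\mid u$ or $a\mid v$ on the left), which is a consequence of the uniqueness of complete factorizations up to permutation and stable association and the distributivity of the factor lattice of $a$ (Theorem~\ref{t:st_assoc} and the surrounding factorization theory). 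Therefore $E(a)$ is a domain, so $E(a)=\kk$.

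The main obstacle is precisely this last point: for an atom $a$, a nonzero endomorphism of $R/Ra$ is forced to be an automorphism (equivalently, $R/Ra$ is indecomposable and $E(a)$ has no nonzero nilpotents). This is \emph{not} a Schur-type triviality, since for $n\geq2$ the module $R/Ra$ need not be simple when $a$ is an atom (for instance the left ideal $Rx_1$ is not maximal: $Rx_1\subsetneq Rx_1+Rx_2^2\subsetneq R$), so Cohn's noncommutative factorization machinery, rather than elementary module theory, must be invoked here; the remaining ingredients — the reduction in (a), the identification of $E(a)$, and ``finite-dimensional domain over an algebraically closed field implies the field'' — are routine.
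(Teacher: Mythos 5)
The paper cites this theorem from Cohn's book without proof, so there is no in-paper argument to compare against; here is an assessment of your proof on its own terms.

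Your argument for part (a) is correct and self-contained. The compatible monomial well-order makes leading terms multiplicative, the word equation $\mu\nu=\lambda\mu$ with $|\mu|\le|\nu|$ forces $\mu$ to be a suffix of $\nu$, the reduction $g\mapsto g-\mathrm{lc}(g)\mathrm{lc}(a)^{-1}\nu_0 a$ terminates by well-foundedness and yields a unique representative of degree $<\deg a$, and the degree bound gives $\dim_\kk E(a)<\infty$.

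Part (b) contains a genuine gap. The framework is sound (a finite-dimensional $\kk$-algebra with $\kk$ central that is a domain must equal $\kk$ when $\kk$ is algebraically closed), but the fact you invoke to show $E(a)$ is a domain --- that for an atom $a$ the right ideal $aR$ is \emph{completely prime} --- is false, and does not follow from unique factorization or distributivity of the factor lattice. For a counterexample, $a=xy+1$ is an atom in $\kk\!\Langle x,y\Rangle$, and $(xy+1)x=x(yx+1)\in aR$, yet $x\notin aR$ and $yx+1\notin aR$ (both by degree comparison). So the desired contradiction from $q_1q_2\in aR$ does not materialize. Here is a correct route, still within the machinery you cite: let $0\neq\bar{g}\in E(a)$, so $g\notin Ra$ and $ag\in Ra$. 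Then $ag$ is a nonzero element of $Rg\cap Ra$, so by the 2-fir property of $R$ the left ideal $Rg+Ra$ is principal, and since $a$ is an atom and $g\notin Ra$ one gets $Rg+Ra=R$; hence the endomorphism $\bar{r}\mapsto\overline{rg}$ of $R/Ra$ is \emph{surjective}. A composition of two such surjections is surjective, hence nonzero because $R/Ra\neq0$, so $E(a)$ has no zero divisors. (This is essentially the statement that $R/Ra$, for $a$ an atom, is a simple object in Cohn's abelian category of torsion modules over the fir $R$, with Schur's lemma giving that $E(a)$ is a division ring.) Your instinct that the module $R/Ra$ is generally not simple as an $R$-module and that elementary Schur reasoning therefore does not apply is correct; the resolution is that it \emph{is} simple in the torsion category, not that $aR$ is prime.
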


Factorization of homogeneous polynomials is especially simple. The following fact is well-known to experts in factorization theory and is tacitly used throughout the paper (e.g., when factorizing the highest-degree homogeneous component of a noncommutative polynomial).

\begin{prop}\label{p:notused}
Stably associated homogeneous noncommutative polynomials differ only up to a scalar multiple. 
In a complete factorization of a homogeneous noncommutative polynomial, the irreducible factors are homogeneous and unique up to scaling.
\end{prop}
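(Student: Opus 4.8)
The plan is to prove both assertions by reducing to graded considerations. First I would establish the second sentence — that irreducible factors of a homogeneous polynomial are themselves homogeneous — and then derive the first sentence from it together with Theorem~\ref{t:st_assoc}.

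\medskip

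\textbf{Step 1: Homogeneity of irreducible factors.} Suppose $f\in\px$ is homogeneous of degree $d$ and $f=gh$ is a nontrivial factorization. Write $g=\sum_{i=p}^{q} g_i$ and $h=\sum_{j=r}^{s} h_j$ as sums of homogeneous components, where $g_p,g_q,h_r,h_s\neq 0$ and $p\le q$, $r\le s$. The product $gh$ has a lowest-degree component $g_p h_r$ and a highest-degree component $g_q h_s$; since $\px$ is a domain (no zero divisors), both $g_p h_r$ and $g_q h_s$ are nonzero, hence $\deg(g_ph_r)=p+r$ and $\deg(g_qh_s)=q+s$ are genuinely the lowest and highest degrees appearing in $gh$. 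But $gh=f$ is homogeneous of degree $d$, so $p+r=q+s=d$, which forces $p=q$ and $r=s$, i.e.\ $g=g_p$ and $h=h_r$ are homogeneous. Iterating this over a complete factorization $f=f_1\cdots f_k$ into irreducibles shows every $f_i$ is homogeneous; uniqueness up to permutation and stable association is \cite[Proposition 3.2.9]{cohn}, and once we know the first sentence of the proposition, this uniqueness collapses to uniqueness up to scaling.

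\medskip

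\textbf{Step 2: Stably associated homogeneous polynomials differ by a scalar.} Let $f,g\in\px$ be homogeneous and stably associated. By Theorem~\ref{t:st_assoc}, there is a coprime relation $fa=bg$ in $\px$. Decompose $a=\sum_i a_i$ and $b=\sum_j b_j$ into homogeneous components; comparing the lowest-degree (resp.\ highest-degree) homogeneous parts of $fa$ and $bg$ and using that $\px$ is a domain, we get $f\,a_{\min}=b_{\min}\,g$ and $f\,a_{\max}=b_{\max}\,g$ where $a_{\min},a_{\max}$ are the lowest/highest nonzero homogeneous components of $a$ (and likewise for $b$), and $\deg a_{\min}+\deg f=\deg b_{\min}+\deg g$, etc. I would then argue that the relation $f\,a_{\max}=b_{\max}\,g$ is again coprime: any nonconstant common left factor of $f$ and $b_{\max}$, or common right factor of $a_{\max}$ and $g$, would after homogenization (taking highest-degree parts) contradict coprimality of the original relation — here one uses that a left factor of a homogeneous polynomial $f$ is, by Step~1, necessarily homogeneous, so the issue of ``homogenizing a factor'' does not arise. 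Thus we may assume $a,b$ in the coprime relation $fa=bg$ are themselves homogeneous. Now, a coprime relation with $a,g$ right coprime and $a$ homogeneous: a homogeneous relation $fa=bg$ with $\deg f=\deg g$ (this equality of degrees follows since stably associated polynomials are rank-equivalent, hence have the same degree of their highest component — or more simply, since $\px/\px f\cong\px/\px g$ as graded modules once we track degrees). Comparing degrees in $fa=bg$ with $\deg f=\deg g$ gives $\deg a=\deg b$. Finally, right-coprimeness of $a$ and $g$ combined with the equality $fa=bg$ and $\deg f = \deg g$ forces $a$ to be constant: otherwise, consider a complete factorization of $a$; the leftmost irreducible factor of $a$ would have to be absorbed, but the equation $fa = bg$ with matched degrees and the eigenring rigidity of Theorem~\ref{t:eigering} (or a direct comparison of leading terms after factoring $f = f'c$, $g = g'c$ into irreducibles and matching) pins down $a \in \kk^\times$, whence $f = \lambda^{-1} b g$ with $b$ constant too, so $f$ and $g$ are scalar multiples.

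\medskip

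\textbf{Main obstacle.} The delicate point is Step~2: making rigorous the passage from a general coprime relation $fa=bg$ to a \emph{homogeneous} coprime relation, and then concluding that the intertwiner $a$ must be a scalar. The cleanest route may actually be via Theorem~\ref{t:st_assoc}(ii): $\px/\px f\cong\px/\px g$ as left modules; since $f,g$ are homogeneous, both quotients carry a natural grading, and one shows any module isomorphism can be taken graded (using that $\px$ is connected graded and the modules are cyclic with a homogeneous generator in degree $0$), after which comparing Hilbert series gives $\deg f=\deg g$ and tracking the image of the generator gives $g = \lambda f$ directly. I would likely present the argument this way to avoid the bookkeeping with leading/trailing components, with the domain property of $\px$ and the elementary degree estimate of Step~1 as the only genuinely needed inputs. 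Since the proposition is explicitly labeled as ``well-known to experts'' and ``not essential,'' a brief proof along these lines should suffice.
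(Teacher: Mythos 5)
Your proposal inverts the paper's order of attack: the paper cites references (Cohn, Exercise~3.1.16, or \cite[Remark~4.1]{HKV0}) for the first sentence and derives the second from the first via Cohn's characterization of complete factorizations, whereas you prove the second sentence directly (Step~1) and then attempt a self-contained proof of the first (Step~2). Step~1 is correct and pleasantly elementary: comparing lowest and highest homogeneous components of a factorization $f=gh$ of a homogeneous $f$, and using that $\px$ is a domain, does show that $g,h$ are homogeneous.

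Step~2, however, has a genuine gap. The pivotal claim — that the coprime relation $fa=bg$ can be replaced by a \emph{coprime} relation $f\,a_{\max}=b_{\max}\,g$ with $a_{\max},b_{\max}$ homogeneous — is not justified, and is in fact false as stated. Your proposed justification runs in the wrong direction: a nonconstant common right factor of $a_{\max}$ and $g$ is indeed homogeneous by Step~1, but there is no reason for it to be a right factor of the (non-homogeneous) $a$, so coprimality of the original relation is not violated. A concrete counterexample: take $f=g=x$, $a=yx+1$, $b=xy+1$. Then $x(yx+1)=(xy+1)x$ is a coprime relation, yet the top-degree reduction is $x\cdot yx = xy\cdot x$, which is \emph{not} coprime, since $x$ is a nonconstant common right factor of $yx$ and $x$. (In this example the bottom-degree component works, but you have no argument selecting a good component in general.) The alternative route you sketch via graded modules — showing that an ungraded isomorphism $\px/\px f\cong\px/\px g$ can be upgraded to a graded one — is also not established; that upgrade is essentially the content one needs and is not automatic. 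Finally, even granting a homogeneous coprime relation, the closing deduction that ``right-coprimeness of $a$ and $g$\ldots\ forces $a$ to be constant'' via ``eigenring rigidity'' is not spelled out and is unclear as written. A correct self-contained proof does exist (for irreducible $f,g$ one can use $\dim_\kk\Hom_R(R/Rf,R/Rg)=1$ to pick a homogeneous intertwiner, and then argue about comaximal transpositions of homogeneous irreducibles to handle the reducible case), but it is considerably more delicate than your sketch — which is presumably why the paper defers to a reference instead.
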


\begin{proof}
For the first part, see \cite[Exercise 3.1.16]{cohn} or \cite[Remark 4.1]{HKV0}. The second part then follows from the first part and the characterization of complete factorizations in a free algebra \cite[Proposition 3.2.9]{cohn}.
\end{proof}

For later use, we record some degree bounds pertaining to coprimeness and comaximality.

\begin{lem}\label{l:comax_bds}
Let $f,g\in \px\setminus\kk$.
\begin{enumerate}[(a)]
    \item If $f$ and $g$ are stably associated, then $\deg f=\deg g$, and there is a coprime relation $fa=bg$ with $\deg a=\deg b<\deg f$.
    \item If $f$ and $g$ are right comaximal, there exist $a,b\in\px$ with $\deg a<\deg g$ and $\deg b<\deg f$ such that $fa+gb=1$. 
\end{enumerate}
\end{lem}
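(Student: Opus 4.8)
The plan is to prove both parts by the same two–step scheme: first invoke the basic factorization theory of $\px$ to obtain \emph{some} relation of the required shape, and then \emph{reduce the degrees} of the auxiliary polynomials appearing in it. Throughout I fix a degree‑compatible monomial order on $\px$ (say degree‑lexicographic) and write $\mathrm{LM}(h)$ for the leading monomial of $0\neq h$; recall that $\deg\mathrm{LM}(h)=\deg h$ and that $\mathrm{LM}$ is multiplicative, $\mathrm{LM}(h_1h_2)=\mathrm{LM}(h_1)\mathrm{LM}(h_2)$, since $\px$ is a domain. The recurring mechanism is: if $h$ is a normal form modulo a one‑sided ideal generated by $g$ (no monomial of $h$ is divisible, on the relevant side, by $\mathrm{LM}(g)$), then a leading‑monomial comparison in whatever relation $h$ satisfies forces $\deg h<\deg g$ or $h=0$.

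For (a) I would take the equality $\deg f=\deg g$ as known (it is part of the standard theory of stable association over a free algebra; alternatively it follows by comparing top‑degree homogeneous components together with Proposition~\ref{p:notused}). By Theorem~\ref{t:st_assoc} there is a coprime relation $fa_0=b_0g$. Let $a=a_0-ug$ be the normal form of $a_0$ modulo the left ideal $\px g$ (obtained by repeatedly subtracting $cw g$ whenever a monomial $cw\,\mathrm{LM}(g)$ occurs; this terminates, since below any fixed monomial there are only finitely many, and it does not increase the degree), and put $b=b_0-fu$, so $fa=bg$. Then I would check: (i) $a\neq 0$ and $\deg a<\deg g$ — writing $fa=hg$ and comparing leading monomials, $\mathrm{LM}(f)\mathrm{LM}(a)=\mathrm{LM}(h)\mathrm{LM}(g)$; if $\deg a\geq\deg g$ the length‑$\deg g$ suffix of the left side lies inside $\mathrm{LM}(a)$ and equals $\mathrm{LM}(g)$, so $\mathrm{LM}(a)$ is right‑divisible by $\mathrm{LM}(g)$, contradicting normal form; and $a=0$ would make $g$ a common right factor of $a_0$ and $g$, contradicting coprimeness; (ii) $\deg b<\deg f$ — from $fa=bg$ and $\deg f=\deg g$ one gets $\deg b=\deg f+\deg a-\deg g=\deg a<\deg f$, which also gives $\deg a=\deg b$; (iii) coprimeness survives — a nonconstant common left divisor of $f$ and $b=b_0-fu$ would divide $b_0$, contradicting left coprimeness of $f,b_0$, and symmetrically on the other side for $a=a_0-ug$ and $g$.

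For (b), right comaximality gives $a_0,b_0$ with $fa_0+gb_0=1$. The structural observation I would use is that $\mathrm{LM}(f)$ and $\mathrm{LM}(g)$ are \emph{prefix‑comparable} (one a prefix of the other): in any relation $fr_1+gr_2=1$ both $fr_1$ and $gr_2$ are nonconstant (as $f,g\notin\kk$), so their top parts must cancel, giving $f^{\mathrm{top}}r_1^{\mathrm{top}}=-g^{\mathrm{top}}r_2^{\mathrm{top}}$ and hence $\mathrm{LM}(f)\mathrm{LM}(r_1)=\mathrm{LM}(g)\mathrm{LM}(r_2)$, which forces prefix‑comparability; were it to fail, no relation $fr_1+gr_2=1$ could exist. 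Consequently one can run a Euclidean‑type algorithm on $(f,g)$: the leading monomials being prefix‑comparable, fully reduce the polynomial of larger degree modulo the right ideal generated by the other; being a normal form, the reduced polynomial has leading monomial a \emph{proper} prefix of the other's, so its degree strictly drops (and it is never $0$, since right comaximal pairs have no common left factor). Iterating, some member of the pair becomes a nonzero constant, yielding $1$ explicitly; back‑substituting through the reduction steps, exactly as in the classical extended Euclidean algorithm / continuant bookkeeping (each step has the form $p\mapsto p-(\text{poly})q$ with $\deg q=\deg(\text{larger})-\deg(\text{smaller})$, just as over $\kk[x]$), produces $a,b$ with $fa+gb=1$ and $\deg a<\deg g$; then $\deg b<\deg f$ follows from $\deg(gb)=\deg(fa-1)=\deg f+\deg a$.

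The step I expect to be the real obstacle is the end of (b): in (a) the degree balance of $fa=bg$ makes a single reduction plus a coprimeness check do everything, whereas in (b) one must justify the Euclidean reduction inside the free algebra (this is exactly where the weak algorithm of $\px$, via prefix‑comparability of leading monomials, is indispensable) and — more delicately — carry the \emph{sharp} bounds $\deg a<\deg g$, $\deg b<\deg f$ through the back‑substitution rather than merely obtaining "bounded degree". The only other input I would rather cite than reprove is $\deg f=\deg g$ in~(a).
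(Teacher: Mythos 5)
Your argument is correct and conceptually tracks the paper's proof---both parts reduce degrees via the one-sided division algorithm of the free algebra---but where the paper quotes Cohn's factorization theory, you rederive the needed structure directly from the weak algorithm (i.e., leading-monomial divisibility). For part (a) the two proofs are mirror images: the paper reduces $b$ against left multiples of $f$ by comparing top homogeneous components, while you reduce $a$ to a normal form modulo the left ideal $\px g$; either way a single degree-drop plus a common-left/right-factor check finishes, and you correctly observe that coprimeness survives the reduction. For part (b) the paper invokes Cohn's Proposition 2.8.1 to decompose $f=\alpha p_m(h_1,\dots,h_m)$ and $g=\alpha p_{m-1}(h_1,\dots,h_{m-1})$ as continuant polynomials and then reads off the sharp degree bounds from Proposition 2.7.3; your Euclidean algorithm recreates exactly this continuant structure by hand (your quotients $q_i$ are Cohn's $h_i$), with the prefix-comparability of $\mathrm{LM}(f)$ and $\mathrm{LM}(g)$---forced by any Bezout relation $fa_0+gb_0=1$ once $f,g\notin\kk$---playing the role the weak algorithm plays in Cohn. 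That prefix-comparability, combined with the normal-form property, guarantees that each full reduction drops the degree below that of the other polynomial, and the back-substitution recurrences $\deg u_{i+1}\le\max(\deg u_{i-1},\deg u_i+\deg q_i)=\deg g-\deg r_i$ (and the analogous bound for $v$) do close to yield the strict bounds $\deg a<\deg g$ and $\deg b<\deg f$. The only gap is presentational rather than mathematical: the back-substitution bookkeeping is asserted ``just as over $\kk[x]$'' rather than proved, but it does work. The trade-off is self-containedness (you reprove what Cohn packages once and for all) versus brevity (the paper's two-citation version); both are sound.
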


\begin{proof}
(a) By \cite[Propositions 2.7.4 and 2.7.6]{cohn} we have $\deg f=\deg g$. 
By Theorem \ref{t:st_assoc}, there are $a,b\in R$ such that $fa=bg$ is a coprime relation. If $\deg b\ge\deg f$, then by looking at the highest-degree homogeneous components in $fa=bg$, there exists $c\in R$ such that $\deg (b-fc)<\deg b$. Let $b'=b-fc$ and $a'=a-cg$. Clearly, $fb'=a'g$ is a coprime relation. Continuing with decreasing the degree in this manner, we obtain $a'',b''\in R$ such that $fa''=b''g$ and $\deg b''<\deg f$.

(b) Without loss of generality, let $\deg f\ge \deg g$. By \cite[Corollary 2.3.9]{cohn}, $f R \cap g R$ is a nonzero principal right ideal. By \cite[Proposition 2.8.1]{cohn} and right comaximality of $f,g$, there exist $m\in\N$, $h_1,\dots,h_{m-1}\in R\setminus\kk$, $h_m\in\R\setminus\{0\}$ and $\alpha\in\kk\setminus\{0\}$ such that
$$f=\alpha p_m(h_1,\dots,h_m),\quad g=\alpha p_{m-1}(h_1,\dots,h_{m-1}),$$
where $p_m$ is the $m$\textsuperscript{th} \emph{continuant polynomial} \cite[Section 2.7]{cohn}. By \cite[Proposition 2.7.3]{cohn}, there exist $a,b\in R$ with $\deg a<\deg g$ and $\deg b<\deg f$ such that $fa+gb=1$.
\end{proof}

\section{Rank equivalence}\label{sec:rank}

In this section, we prove that rank-equivalent polynomials are stably associated (Theorem \ref{t:rankequiv}). This result is a generalization of \cite[Corollary 2.13]{HKV} for irreducible polynomials, and of \cite[Theorem 5.2]{DKMV} for homogeneous linear matrix pencils.

The \emph{inner rank} $\rk f$ of a nonzero $d\times e$ matrix $f$ over $\px$ is the smallest $r\in\N$ such that $f=gh$ for an $d\times r$ matrix $g$ and an $r\times e$ matrix $h$ over $\px$. Equivalently \cite[Theorem 7.5.13]{cohn}, $r$ is the rank of $f$ as a matrix over the free skew field, the universal skew field of fractions of $\px$ \cite[Corollaries 2.5.2 and 7.5.14]{cohn}. 
An $d\times d$ matrix over $\px$ is \emph{full} if its inner rank equals $d$; in particular, every nonzero element of $\px$ is a full $1\times1$ matrix.
For convenience, we record the following statement, which is well-known to experts.

\begin{lem}\label{l:rankeval}
Let $f$ be a nonzero matrix over $\px$. Its inner rank equals 
$$\max_{k\in\N}\,
\frac1k \max \big\{\rk f(\uX)\colon \uX\in\mtx{k}^n\big\}.$$
\end{lem}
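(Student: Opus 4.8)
The plan is to establish the two inequalities separately. Write $R=\px$, let $f\in R^{d\times e}$ be the given nonzero matrix, and set $r=\rk f$ for its inner rank. The easy bound is that $\frac1k\rk f(\uX)\le r$ for every $k$ and every $\uX\in\mtx{k}^n$: by the definition of the inner rank there is a factorization $f=gh$ with $g\in R^{d\times r}$ and $h\in R^{r\times e}$, whence $f(\uX)=g(\uX)h(\uX)$ with $g(\uX)$ of size $dk\times rk$, so $\rk f(\uX)\le rk$. In particular the right-hand side of the asserted identity is $\le r$, and it remains to exhibit a single $\uX$ attaining $r$ — which will also promote the supremum over $k$ to a genuine maximum.

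To that end I first reduce to a full square matrix. By \cite[Theorem 7.5.13]{cohn} (recalled above), $r$ equals the rank of $f$ regarded as a matrix over the free skew field $\mathcal{U}$ of $R$. Linear algebra over the division ring $\mathcal{U}$ then produces index sets $I,J$ with $|I|=|J|=r$ for which the submatrix $F:=f_{I,J}\in R^{r\times r}$ is invertible over $\mathcal{U}$; by the same theorem $F$ is a \emph{full} $r\times r$ matrix over $R$. For any $\uX\in\mtx{k}^n$ the matrix $F(\uX)$ is precisely the block-submatrix of $f(\uX)$ on block-rows $I$ and block-columns $J$, so $\rk f(\uX)\ge\rk F(\uX)$. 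Hence it is enough to find $k$ and $\uX\in\mtx{k}^n$ with $F(\uX)\in\mtx{rk}$ invertible: then $\rk f(\uX)\ge rk$, the easy bound forces $\rk f(\uX)=rk$, and $\frac1k\rk f(\uX)=r$.

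The crux is therefore the claim that a full $m\times m$ matrix $F$ over $R$ is invertible under some matrix substitution — which is exactly Cohn's Specialization Lemma, resting on the construction of the free skew field \cite{cohn}. Concretely, for $k\in\N$ let $\underline{\Xi}=(\Xi^{(1)},\dots,\Xi^{(n)})$ be the tuple of generic $k\times k$ matrices whose entries are independent commuting indeterminates over $\kk$; then $F(\underline{\Xi})$ is an $mk\times mk$ matrix over a commutative polynomial ring over $\kk$, and one must show that $\det F(\underline{\Xi})$ is not identically zero for some $k$. Since $\kk$ is infinite, a specialization of the indeterminates to scalars then yields a genuine $\uX\in\mtx{k}^n$ with $F(\uX)$ invertible. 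A convenient route to the non-vanishing is to linearize first: by a standard enlargement one has $L=P(F\oplus I_N)Q$ with $L$ a (full) linear pencil and $P,Q\in\GL_{m+N}(R)$, so that invertibility of $L(\uX)$ implies that of $F(\uX)$, reducing the claim to full linear pencils, for which it is classical (see \cite{DKMV} and the references therein).

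Every ingredient other than this last step is bookkeeping — the factorization bound, the passage to a full $r\times r$ submatrix via the rank over $\mathcal{U}$, and the identification of $F(\uX)$ as a submatrix of $f(\uX)$. The one genuinely nontrivial input, and the step I expect to be the main obstacle, is the transition from the skew-field-theoretic statement ``$F$ is full over $R$'' to the concrete assertion ``$F$ is invertible on some tuple of matrices over $\kk$''; I would dispatch it by invoking the Specialization Lemma (or its linear-pencil incarnation) rather than reproving it.
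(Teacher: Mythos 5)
Your proof is correct, and it reaches the same conclusion through a mildly different organization of the hard inequality; both arguments ultimately hinge on the same nontrivial input, namely that a full matrix over $\px$ (equivalently, one invertible over the free skew field) becomes invertible under some matrix substitution. The paper does this by factorizing $f$ directly over the free skew field as (invertible)$\cdot(I_r\oplus 0)\cdot$(invertible) and then citing \cite[Proposition~2.1]{KVV10}, which produces a single matrix tuple $\uX$ where both invertible factors are defined and invertible; the rank of $f(\uX)$ is then read off immediately. You instead stay inside $\px$ longer: you extract a full $r\times r$ submatrix $F$ of $f$ (legitimate, by linear algebra over the division ring together with \cite[Theorem~7.5.13]{cohn}), observe that $F(\uX)$ sits as a block submatrix of $f(\uX)$, and then reduce invertibility of $F$ at a point to the linear-pencil case via Higman linearization and the Specialization Lemma. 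Both routes are sound; yours is arguably more ``elementary'' in its bookkeeping, while the paper's is slicker because \cite[Proposition~2.1]{KVV10} delivers the desired $\uX$ in one step. One small note: \cite{DKMV} is not the natural reference for the final step of your argument --- the precise statement you want is Cohn's Specialization Lemma (or, in the nc-function-theoretic language the paper uses, \cite[Proposition~2.1]{KVV10}).
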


\begin{proof}
Let $r$ be the inner rank of $f$.
Clearly, $\rk f(\uX)\le kr$ for every $\uX\in\mtx{k}^n$.
Let $f=r\left(\begin{smallmatrix}
I_r&0\\0&0\end{smallmatrix}\right)s$ where $r,s$ are invertible matrices over the free skew field. By \cite[Proposition 2.1]{KVV10}, for some $k\in\N$ there exists $\uX\in\mtx{k}^n$ such that $r$ and $s$ are both defined and invertible at $\uX$. Then
$$f(\uX)=r(\uX)\begin{pmatrix}
I_{kr}&0\\0&0\end{pmatrix}s(\uX),$$
so $\rk f(\uX)=kr$.
\end{proof}

We start by showing that full matrices over $\px$, which are not stably associated, attain kernels of distinct dimensions when evaluated on suitable linear operators on an infinite-dimensional vector space.

\begin{lem}\label{l:infrank}
Let $f$ and $g$ be full matrices over $\px$. If $f$ and $g$ are not stably associated, there exists an infinite-dimensional vector space $V$ over $\kk$ and $X_1,\dots,X_n\in \End_\kk(V)$ such that $\ker f(\uX)$ and $\ker g(\uX)$ are finite-dimensional, with different dimensions.
\end{lem}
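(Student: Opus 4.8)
The plan is to reinterpret kernels of polynomial evaluations as Hom-spaces of modules over $R=\px$, and then realize the required operators as a left-multiplication action on a suitable quotient module. Write $F\in R^{d\times d}$ and $G\in R^{e\times e}$, and put $M:=R^{1\times d}/R^{1\times d}F$ and $N:=R^{1\times e}/R^{1\times e}G$. Since $F$ and $G$ are full, right multiplication by them is injective on row modules, so $0\to R^{1\times d}\xrightarrow{\cdot F}R^{1\times d}\to M\to 0$ is exact; in particular $M$ is a finitely generated torsion left $R$-module, and likewise for $N$. The matrix version of Theorem \ref{t:st_assoc}(ii) gives $M\not\cong N$. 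Now for any $\kk$-vector space $V$ and $\uX\in\End_\kk(V)^n$, write $V_{\uX}$ for $V$ viewed as a left $R$-module via $\uX$. Applying $\Hom_R(-,V_{\uX})$ to the presentation of $M$ and using $\Hom_R(R^{1\times d},V_{\uX})\cong V^d$, one checks that the induced endomorphism of $V^d$ is exactly the operator $F(\uX)$; hence $\Hom_R(M,V_{\uX})\cong\ker F(\uX)$, and analogously $\Hom_R(N,V_{\uX})\cong\ker G(\uX)$.

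Next, the plan is to feed in a test module of the form $V=M_H:=R^{1\times m}/R^{1\times m}H$ for a cleverly chosen full, non-unit matrix $H$ over $R$ (equivalently, $\uX$ is the left-multiplication action of $x_1,\dots,x_n$ on $M_H$). Such a $V$ is infinite-dimensional over $\kk$: a nonzero torsion module presented by a full square matrix has as many defining relations as generators, which forces infinite $\kk$-dimension over $\px$ in $n\ge 2$ variables, in contrast with, say, the $1$-dimensional module $\kk=R/(x_1,\dots,x_n)$, whose minimal presentation is unbalanced. Moreover $\Hom_R(M,M_H)$ and $\Hom_R(N,M_H)$ are finite-dimensional over $\kk$, by the matrix strengthening of Theorem \ref{t:eigering}(a) that Hom-spaces between finitely generated torsion $R$-modules are finite-dimensional. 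Thus the task reduces to choosing a full $H$ with $\dim_\kk\Hom_R(M,M_H)\neq\dim_\kk\Hom_R(N,M_H)$; the degenerate cases in which $F$ or $G$ is a unit (so that $\ker F(\uX)\equiv 0$) are disposed of at once by taking $V=M_G$ or $V=M_F$.

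The crux, and the step I expect to be the main obstacle, is to show that the non-isomorphism $M\not\cong N$ is always witnessed by such a test module. A naive attempt is to take $H\in\{F,G\}$, namely to test $M$ and $N$ against themselves; this can fail, as for $F=xy$ and $G=yx$, where the indecomposables $R/Rxy$ and $R/Ryx$ have one-dimensional Hom-spaces in every combination, so the relevant pairing is degenerate. One genuinely needs test modules not built from $M$ and $N$: in that example $H=x$ works, because $x$ is a right factor of $yx$ but not of $xy$, so that $\overline 1\in\ker G(\uX)$ while $\overline 1\notin\ker F(\uX)$ when $V=R/Rx$. In general I would decompose $M$ and $N$ into indecomposables, which is legitimate because their endomorphism rings are finite-dimensional, so Krull--Schmidt applies, and then show that the difference of the two multiplicity vectors is detected by $\dim_\kk\Hom_R(-,M_H)$ for some suitable $H$, either by producing $H$ directly from the factorization theory of $\px$ (an atom whose right-factor multiplicities in $F$ and $G$ differ, refined via comaximality/continuant estimates in the spirit of Lemma \ref{l:comax_bds} when plain right-divisibility is not enough), or by establishing that the Hom-pairing among all indecomposable torsion modules presented by full square matrices is nondegenerate. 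Once such an $H$ is found, $V=M_H$ with its left-multiplication action is the desired tuple of operators.
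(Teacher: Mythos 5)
Your reduction of the problem is exactly the one the paper uses: reinterpret $\ker h(\uX)$ as $\Hom_R(R^{1\times d}/R^{1\times d}h,\,V_{\uX})$, observe that full matrices present torsion modules with finite-dimensional Hom-spaces, and then feed in a test module of the form $R^{1\times m}/R^{1\times m}H$ with the tautological left-multiplication action. You also correctly recognize that testing $M$ against $N$ directly does not suffice, and your $xy$ versus $yx$ with $H=x$ example is a good illustration.

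However, there is a genuine gap at exactly the point you flag as ``the crux.'' You reduce the lemma to the claim that the Hom-pairing is nondegenerate on the category of torsion modules presented by full square matrices (equivalently, that nonisomorphic objects of this abelian category are distinguished by some $\dim_\kk\Hom_R(-,V)$), and then you sketch two speculative routes without carrying either one out. The paper closes this gap by citing a theorem of Bongartz: in an abelian category in which all Hom-spaces are finite-dimensional over $\kk$, two objects $M\not\cong N$ are separated by $\dim_\kk\Hom(M,-)$ and $\dim_\kk\Hom(N,-)$ on some test object. That single citation is the content you are missing; your Krull--Schmidt/multiplicity-vector plan is in effect an attempt to reprove it, and your factorization-theoretic alternative (hunting for an atom with different right-factor multiplicities) is not obviously viable because, as your own $R/Rxy$, $R/Ryx$ example shows, the witness $H$ need not be related to the atomic factors of $F$ or $G$ in any simple way. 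Two smaller points: your disposal of the case where $F$ (say) is a unit by taking $V=M_F$ gives $V=0$, which is not infinite-dimensional, so you would want $V=M_G$ there; and the infinite-dimensionality of $M_H$ over $\kk$ tacitly requires $n\ge2$ (for $n=1$ the test modules are finite-dimensional), a restriction the paper also does not spell out.
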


\begin{proof}
Consider the category $\mathscr{T}$ of left $R$-modules of the form $\cM(h):=R^{1\times d}/R^{1\times d}h$ for $d\in\N$ and full $h\in R^{d\times d}$. By \cite[Proposition 3.2.1 and Theorem 3.2.3]{cohn}, $\mathscr{T}$ is an abelian category. Furthermore, $\Hom_R(M,N)$ is a finite-dimensional over $\kk$ for every $M,N\in\mathscr{T}$ by \cite[Theorem 5.8.5]{cohn}.
Since $f$ and $g$ are not stably associated, the modules $\cM(f)$ and $\cM(g)$ in $\mathscr{T}$ are not isomorphic by \cite[Corollary 0.5.5]{cohn}. Therefore, there exists $V\in\mathscr{T}$ such that $\dim_\kk \Hom_R(\cM(f),V)\neq \dim_\kk \Hom_R(\cM(g),V)$ by \cite[Theorem]{bongartz}. Let $X_1,\dots,X_n\in \End_\kk(V)$ be given by the left action of $x_1,\dots,x_n$ on $V$. If $e_1,\dots,e_d$ denotes the standard basis of the left $R$-module $R^{1\times d}$, then for every $h\in R^{d\times d}$, there is an isomorphism of $\kk$-vector spaces
$$\Hom_R(\cM(h),V)\to\ker h(\uX)\subseteq V^d,\qquad
\phi\mapsto \begin{pmatrix}\phi(e_1)\\ \vdots \\ \phi(e_d)\end{pmatrix}.$$
Therefore, $\dim_\kk \ker f(\uX)\neq \dim_\kk \ker g(\uX)$.
\end{proof}

The following is a generalization of \cite[Lemma 3.1]{vol21}, and our main intermediate step towards Theorem \ref{t:rankequiv}. Together with Lemma \ref{l:rankeval}, it implies that given a full linear matrix $\Lambda$ over $\px$, every rectangular matrix tuple can be completed to a larger square tuple without increasing the kernel when evaluated at $\Lambda$.

\begin{prop}\label{p:pad}
Let $\Lambda=\sum_{i=1}^nA_ix_i\in\px^{d\times d}$ be a full linear matrix, $p\ge q$ and $T\in (\kk^{p\times q})^n$.
Denote $\tilde p=p+(p-q)(d-1)$.
Then the $\tilde p d\times \tilde p d$ affine matrix
$$\cL=\sum_iA_i\otimes 
\begin{pmatrix}
\begin{matrix}T_i\\0\end{matrix} & 
\begin{matrix}
y_{i11}&\cdots&y_{i1(\tilde p-q)}\\
\vdots&\ddots&\vdots\\
y_{i\tilde p1}&\cdots&y_{i\tilde p(\tilde p-q)}\\
\end{matrix}
\end{pmatrix}
$$
in $n\tilde p(\tilde p-q)$ free variables $\uy=(y_{i\imath\jmath})_{i,\imath,\jmath}$ has inner rank $\tilde p d-\dim\ker \Lambda(T)$ over $\py$.
\end{prop}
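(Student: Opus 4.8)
The plan is to prove the two inequalities $\rk\cL\le\tilde pd-\dim\ker\Lambda(T)$ and $\rk\cL\ge\tilde pd-\dim\ker\Lambda(T)$ separately. Let me write $\kappa:=\dim\ker\Lambda(T)$ throughout. For the upper bound, I would exhibit an explicit rank factorization after a suitable choice of specialization: setting all the free variables $\uy$ to scalar values turns $\cL$ into an ordinary matrix over $\kk$, and because inner rank dominates the rank of any matrix evaluation (Lemma \ref{l:rankeval} read in the easy direction), it suffices to produce \emph{one} scalar point where the rank equals $\tilde pd-\kappa$ — no, wait, that would give a lower bound on $\rk\cL$, not an upper bound. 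For the upper bound I instead want a genuine factorization $\cL=GH$ over $\py$ with $G$ of size $\tilde pd\times(\tilde pd-\kappa)$. The natural way to build it: pick a basis of $\ker\Lambda(T)\subseteq\kk^{dq}$, arrange these $\kappa$ vectors as columns of a matrix, and observe that the columns of $\cL$ (which, up to reindexing, are "$\Lambda$ applied blockwise to the columns $\left(\begin{smallmatrix}T\\0\end{smallmatrix}\right)$ augmented by generic columns $\uy$") satisfy exactly $\kappa$ independent $\kk$-linear relations coming from $\ker\Lambda(T)$ — the generic columns contribute nothing to the kernel once we are looking only at relations supported on the first $q$ columns of each block. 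Making this precise, one gets $\cL\cdot N=0$ for an explicit constant $\tilde pd\times\kappa$ matrix $N$ of rank $\kappa$ over $\kk$, hence $\rk\cL\le\tilde pd-\kappa$.

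For the lower bound — which I expect to be the crux — the idea, following the strategy of \cite[Lemma 3.1]{vol21} that the authors cite, is to specialize the free variables $\uy$ cleverly and invoke Lemma \ref{l:rankeval} in the direction that says $\rk\cL\ge\frac1k\rk\cL(\uY)$ for any matrix tuple $\uY$. The dimension bookkeeping in the definition $\tilde p=p+(p-q)(d-1)$ is the signal for what specialization to use: we have $d$ "copies" worth of padding, each of width $p-q$, so that after placing $T$ (of shape $p\times q$) in the top-left, the remaining $\tilde p-q$ columns of each $A_i$-block can be filled so as to make the first $d-1$ diagonal blocks of the resulting operator "shift-like" and the last block carry the essential information. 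Concretely, I would choose scalar (or small-matrix) values for the $y_{i\imath\jmath}$ so that $\cL$, evaluated there, becomes a matrix whose rank is visibly $\tilde pd-\kappa$: one wants the specialized columns $\uy$ to become, together with $\left(\begin{smallmatrix}T\\0\end{smallmatrix}\right)$, a tuple $\tilde T\in(\kk^{\tilde p\times\tilde p})^n$ for which $\Lambda(\tilde T)$ is an honest $\tilde pd\times\tilde pd$ matrix with $\dim\ker\Lambda(\tilde T)=\kappa$. Since $\tilde T$ extends $T$, one always has $\dim\ker\Lambda(\tilde T)\ge\dim\ker\Lambda(T)=\kappa$ (a kernel vector of $\Lambda(T)$ extends by zeros), so the content is the reverse inequality $\dim\ker\Lambda(\tilde T)\le\kappa$, i.e.\ the padding does not create \emph{new} kernel. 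This is exactly the ``completion without increasing the kernel'' assertion flagged in the paragraph before the proposition, and it is where fullness of $\Lambda$ must enter: for a non-full $\Lambda$ one could never avoid extra kernel.

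The mechanism for the non-increase, I expect, is an inductive/telescoping argument on the $d$ blocks of width $p-q$: at each stage one chooses the next batch of $p-q$ generic columns to be "in general position" relative to the kernel built so far, using that $\Lambda$ is full — hence, over the free skew field or over a sufficiently large matrix evaluation, $\Lambda$ has trivial kernel — to guarantee that a Zariski-generic choice strictly decreases (or keeps minimal) the defect. The count $(p-q)(d-1)$ extra columns, as opposed to some other number, should be precisely what is needed to "use up" the $d$-fold amplification coming from the tensor factor $A_i\otimes(-)$: each application of $\Lambda$ can cut down kernel dimension by at most... roughly a factor related to $d$, so $d-1$ rounds of padding of width $p-q$ suffices to absorb the gap $p-q$ in each of the $d$ coordinates. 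I would make this rigorous by writing $\Lambda(\tilde T)$ in block-upper-triangular form after the specialization, with the first $d-1$ diagonal blocks invertible by genericity and fullness, so that $\dim\ker\Lambda(\tilde T)=\dim\ker(\text{last block})$, and then identifying the last block with $\Lambda(T)$ up to an invertible change of coordinates. Combining the two bounds gives $\rk\cL=\tilde pd-\kappa$, as claimed. The main obstacle, to reiterate, is organizing the genericity argument for the lower bound so that the telescoping is valid with exactly the stated number of padding columns; the upper bound and the easy half of Lemma \ref{l:rankeval} are routine.
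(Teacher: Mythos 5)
Your upper bound is sound and matches what the paper has in mind when it says the inequality $\rk\cL\le\tilde pd-\dim\ker\Lambda(T)$ is clear: padding a basis of $\ker\Lambda(T)$ with zeros yields a constant matrix $N$ of rank $\kappa:=\dim\ker\Lambda(T)$ with $\cL N=0$, which bounds the inner rank from above.

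The lower bound is where the two arguments part ways, and where your proposal has a genuine gap. The paper never looks for a numerical specialization of $\uy$ that witnesses the rank. Instead, it applies the Fortin--Reutenauer characterization of inner rank directly to the affine pencil $\cL$ over $\py$: it suffices to show that every pair of constant matrices $U,V$ over $\kk$ with $U^\top\cL V=0$ satisfies $\rk U+\rk V\le\tilde pd+\kappa$. After a canonical shuffle of the tensor factors, the equation $U^\top\cL V=0$ decomposes into a ``constant'' part (involving $\sum_i T_i\otimes A_i$) and a ``free'' part (involving the $y_{i\imath\jmath}$), fullness of $\Lambda$ is used once, again via Fortin--Reutenauer, to bound the ranks of the blocks of $U,V$ that meet the free part, and a two-case analysis then closes the inequality; the formula $\tilde p=p+(p-q)(d-1)$ is exactly what makes the bookkeeping work out. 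No specialization, no genericity argument.

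Your proposed route — evaluate the $y_{i\imath\jmath}$ at scalars so that $\cL$ becomes $\sum_i A_i\otimes\tilde T_i$ for a square extension $\tilde T\in(\kk^{\tilde p\times\tilde p})^n$ of $T$ with $\dim\ker\Lambda(\tilde T)=\kappa$, then invoke the easy half of Lemma \ref{l:rankeval} — presupposes that the inner rank of $\cL$ is attained at a scalar point. That is not automatic, and is in fact the crux you would need to establish. Lemma \ref{l:rankeval} only guarantees that some matrix tuple $\uY$ of some unspecified size $\ell$ attains $\rk\cL(\uY)=\ell(\tilde pd-\kappa)$; the $\ell$ need not be $1$. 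Your appeal to fullness of $\Lambda$ to justify a generic scalar choice does not supply what is needed: fullness promises that $\Lambda$ is generically non-singular on sufficiently large matrix tuples, but says nothing about scalar tuples, and indeed there exist full linear pencils (the $3\times 3$ generic skew-symmetric pencil in three variables is the standard example) that are singular at every scalar point of $\kk^n$. You yourself flag the genericity-telescoping step as the obstacle, but it is not merely a matter of ``organizing'' the argument; as formulated, the step does not follow from fullness, and a scalar witness may simply not exist. The content of the proposition — and the reason the paper phrases it as an inner-rank computation over $\py$ rather than as a statement about completions — is precisely that the Fortin--Reutenauer machinery lets one sidestep the need for any explicit specialization.
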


\begin{proof}
Clearly, the inner rank of $\cL$ is at most $\tilde pd-\dim\ker \Lambda(T)$. 
By \cite[Theorem 1]{FR} it suffices to prove the following: if $U\in\kk^{\tilde pd\times a}$ and $V\in\kk^{\tilde pd\times b}$ satisfy $U^\top \cL V=0$, then $\rk U+\rk V\le \tilde pd+\dim\ker \Lambda(T)$.

After a canonical shuffle and with a slight abuse of notation, we write
$$
K=\sum_iT_i\otimes A_i,\qquad
\cL=\sum_i 
\begin{pmatrix}
\begin{matrix} T_i\\0\end{matrix} & 
\begin{matrix}
	y_{i11}&\cdots&y_{i1(p-q)}\\
	\vdots&&\vdots\\
	y_{ip1}&\cdots&y_{ip(p-q)}\\
\end{matrix}
\end{pmatrix}\otimes A_i.
$$
Let $U\in\kk^{\tilde pd\times a}$ and $V\in\kk^{\tilde pd\times b}$ be such that $U^\top \cL V=0$.
Denote
$$
U=\begin{pmatrix}U_1\\ \vdots \\U_{\tilde p}\end{pmatrix},\quad 
U_0=\begin{pmatrix}U_1\\ \vdots \\U_p\end{pmatrix},\quad 
V=\begin{pmatrix}V_1\\ \vdots \\V_{\tilde p}\end{pmatrix},\quad
V_0=\begin{pmatrix}V_1\\ \vdots \\V_q\end{pmatrix}
$$
where $U_k\in \kk^{d\times a}$ and $V_k\in \kk^{d\times b}$ for $k\ge 1$. 
In terms of these blocks, the equation $U^\top \cL V=0$ is equivalent to
\begin{align}
\label{e:constrank}U_0^\top KV_0&=0,\\
\label{e:linrank}U_k^\top  \Lambda V_\ell&=0\quad \text{for }1\le k\le \tilde p,\, q+1\le \ell\le \tilde p.
\end{align}
Since $\Lambda$ is full, 
\begin{equation}\label{e:linrank0}
\rk \begin{pmatrix}
U_1&\cdots& U_{\tilde p}
\end{pmatrix}
+\rk \begin{pmatrix}
V_{q+1}&\cdots& V_{\tilde p}
\end{pmatrix}\le d\end{equation}
by \eqref{e:linrank} and \cite[Proposition 3.1.2]{cohn} (or \cite[Theorem 1]{FR}). Denote $r=\rk \left(\begin{smallmatrix}V_{q+1}& \cdots &V_{\tilde p}\end{smallmatrix}\right)$. 
Next, we distinguish two complementary cases.
\\
(First case) Suppose 
\begin{equation}\label{e:linrank1}
\rk \begin{pmatrix}
U_{p+1}&\cdots& U_{\tilde p}
\end{pmatrix}
+\rk \begin{pmatrix}
V_{q+1}&\cdots& V_{\tilde p}
\end{pmatrix}\le d-1.
\end{equation}
By \eqref{e:constrank} and \eqref{e:linrank1},
\begin{align*}
\rk U+\rk V
&\le \rk U_0+\rk V_0+\rk \begin{pmatrix}U_{p+1}\\ \vdots \\U_{\tilde p}\end{pmatrix}
+\rk \begin{pmatrix}V_{q+1}\\ \vdots \\V_{\tilde p}\end{pmatrix}\\
&\le pd+\dim\ker K+(\tilde p-p)\rk \begin{pmatrix}U_{p+1}& \cdots &U_{\tilde p}\end{pmatrix}+(\tilde p-q)
\rk \begin{pmatrix}V_{q+1}& \cdots &V_{\tilde p}\end{pmatrix}\\
&\le pd+\dim\ker K+(\tilde p-p)(d-r-1)
+(\tilde p-q)r\\
&=\tilde pd+\dim\ker K+(p-q)r-(\tilde p-p)\\
&\le \tilde pd+\dim\ker K+(p-q)(d-1)-(\tilde p-p)\\
&= \tilde pd+\dim\ker K
\end{align*}
by the definition of $\tilde p$.
\\
(Second case) Now suppose \eqref{e:linrank1} does not hold. Then by \eqref{e:linrank0},
\begin{align*}
&\rk \begin{pmatrix}
U_1&\cdots& U_{\tilde p}
\end{pmatrix}
+\rk \begin{pmatrix}
V_{q+1}&\cdots& V_{\tilde p}
\end{pmatrix}=d,\\
&\ran \begin{pmatrix}
U_1&\cdots& U_p
\end{pmatrix}\subseteq \ran \begin{pmatrix}
U_{p+1}&\cdots& U_{\tilde p}
\end{pmatrix}.
\end{align*}
Thus, after multiplying $\Lambda$ on the left and on the right with an invertible matrix, we can assume that our matrices have the block structure
$$A_j=
\begin{pmatrix}A_j^\uparrow&A_j^\to\\0&A_j^\downarrow
\end{pmatrix},
\quad
U_k=\begin{pmatrix}
0\\ U_k^\downarrow
\end{pmatrix},\quad
V_k=\begin{pmatrix}
V_k^\uparrow\\ V_k^\downarrow
\end{pmatrix} \text{ for }k\le q,\quad
V_k=\begin{pmatrix}
V_k^\uparrow\\ 0
\end{pmatrix} \text{ for }k> q
$$
where $A_j^\uparrow$ and $V_k^\uparrow$ have $r$ rows, and $A_j^\downarrow$ and $U_k^\downarrow$ have $d-r$ rows.
Then \eqref{e:constrank} implies
$$
\begin{pmatrix}U_1^\downarrow\\ \vdots \\U_p^\downarrow\end{pmatrix}^\top 
\left(\sum_i T_i\otimes A_i^{\downarrow}\right)
\begin{pmatrix}V_1^\downarrow\\ \vdots \\V_q^\downarrow\end{pmatrix}=0,
$$
and so
$$
\rk \begin{pmatrix}U_1^\downarrow\\ \vdots\\U_p^\downarrow\end{pmatrix}+
\rk \begin{pmatrix}V_1^\downarrow\\ \vdots\\V_q^\downarrow\end{pmatrix}
\le p(d-r)+\dim\ker \left(\sum_i T_i\otimes A_i^{\downarrow}\right)
\le p(d-r)+\dim\ker K
$$
because $\sum_i T_i\otimes A_i^{\downarrow}$ is a square block in the block-upper-triangular $K$.
Hence,
\begin{align*}
\rk U+\rk V
&\le 
\rk \begin{pmatrix}U_1^\downarrow\\ \vdots\\U_p^\downarrow\end{pmatrix}
+\rk \begin{pmatrix}V_1^\downarrow\\ \vdots\\V_q^\downarrow\end{pmatrix}
+\rk \begin{pmatrix}U_{p+1}^\downarrow\\ \vdots\\U_{\tilde p}^\downarrow\end{pmatrix}
+\rk \begin{pmatrix}V_{q+1}^\uparrow\\ \vdots\\V_{\tilde p}^\uparrow\end{pmatrix}
+\rk \begin{pmatrix}V_1^\uparrow\\ \vdots\\V_q^\uparrow\end{pmatrix}\\
&\le p(d-r)+\dim\ker K+(\tilde p-p)(d-r)+(\tilde p-q)r+qr\\
&=\tilde p d+\dim\ker K.
\end{align*}

Therefore, $\rk U+\rk V\le \tilde p d+\dim\ker K=\tilde p d+\dim\ker \Lambda(T)$ in both cases, so $\Lambda$ is full.
\end{proof}

\subsection{Rank-equivalent matrices over the free algebra}

We can now prove the main result of this section.

\begin{thm}\label{t:rankequiv}
Let $f$ and $g$ be full $c\times c$ matrices over $\px$. Then $f$ and $g$ are rank-equivalent if and only if $f$ and $g$ are stably associated.
\end{thm}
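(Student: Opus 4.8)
The reverse implication is trivial (stably associated $c\times c$ matrices over $\px$ are rank-equivalent, as observed before Theorem \ref{t:st_assoc}), so the plan is to assume $f$ and $g$ rank-equivalent and deduce stable association, in four stages. First, reduce to pencils: by a standard linearization (companion matrices, cf.\ \cite{cohn}), $f$ is stably associated to a full affine-linear matrix $\Lambda_f=A_0+\sum_{i=1}^nA_ix_i\in\px^{N_f\times N_f}$ and $g$ to some $\Lambda_g\in\px^{N_g\times N_g}$; replacing these by $\Lambda_f\oplus I_{N-N_f}$ and $\Lambda_g\oplus I_{N-N_g}$ with $N=\max(N_f,N_g)$, I may assume both have the same size $N$, and I write $\Lambda,\Lambda'$ for them. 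Stable association of $f$ (size $c$) with $\Lambda$ (size $N$) forces $\rk\Lambda(\uX)=\rk f(\uX)+(N-c)k$ for every $\uX\in\mtx{k}^n$, and similarly for $g,\Lambda'$ with the same $N$ and $c$; hence $(f,g)$ is rank-equivalent (respectively, stably associated) if and only if $(\Lambda,\Lambda')$ is. So it suffices to prove the theorem for full linear $N\times N$ matrices $\Lambda,\Lambda'$ over $\px$.

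Second, if the conclusion fails, I produce an infinite-dimensional evaluation separating kernel dimensions and compress it to a \emph{rectangular} finite one. Assuming $\Lambda,\Lambda'$ rank-equivalent but not stably associated, Lemma \ref{l:infrank} yields a $\kk$-vector space $V$ and $X_1,\dots,X_n\in\End_\kk(V)$ with $\dim\ker\Lambda(\uX)$ and $\dim\ker\Lambda'(\uX)$ finite and distinct. Let $W\subseteq V$ be the finite-dimensional span of all entries of fixed bases of $\ker\Lambda(\uX),\ker\Lambda'(\uX)\subseteq V^N$, set $W'=W+\sum_iX_iW$, fix a basis $e_1,\dots,e_q$ of $W$ extended to a basis $e_1,\dots,e_p$ of $W'$ (so $p\ge q$), and let $T_i\in\kk^{p\times q}$ be the matrix of $X_i|_W\colon W\to W'$. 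Since each $X_i|_W$ lands in $W'$, the operator $\Lambda(\uX)$ carries $W^N$ into $(W')^N$, and a direct coordinate computation identifies $\ker\Lambda(T)$ with $\{v\in W^N:\Lambda(\uX)v=0\}$, which equals $\ker\Lambda(\uX)\cap W^N=\ker\Lambda(\uX)$ by the choice of $W$; the same holds for $\Lambda'$. Hence $T\in(\kk^{p\times q})^n$ with $p\ge q$ satisfies $\dim\ker\Lambda(T)\neq\dim\ker\Lambda'(T)$.

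Third, pass from the rectangular tuple to a genuine finite matrix tuple. Apply Proposition \ref{p:pad} (or its evident version allowing a constant term, since the linearization need not be homogeneous) to $\Lambda$ and to $\Lambda'$ with this common $T$; as $\tilde p$ and the padded matrices $M_i(\uy)$ depend only on $T,p,q,N$, they coincide, so I obtain matrices $\cL_\Lambda,\cL_{\Lambda'}$ over $\py$ which are $\Lambda$, respectively $\Lambda'$, evaluated at one common tuple $\underline{M}(\uy)=(M_1(\uy),\dots,M_n(\uy))$ of $\tilde p\times\tilde p$ matrices over $\py$, with distinct inner ranks $\tilde pN-\dim\ker\Lambda(T)$ and $\tilde pN-\dim\ker\Lambda'(T)$. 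Since inner rank is rank over the free skew field, $\cL_\Lambda\oplus\cL_{\Lambda'}$ has inner rank the sum of these two, so by Lemma \ref{l:rankeval} there is a finite tuple $\uY_0$ of some size $\ell$ with $\rk(\cL_\Lambda\oplus\cL_{\Lambda'})(\uY_0)=\ell\cdot\rk(\cL_\Lambda\oplus\cL_{\Lambda'})$; together with the trivial bounds $\rk\cL_\Lambda(\uY_0)\le\ell\,\rk\cL_\Lambda$ and $\rk\cL_{\Lambda'}(\uY_0)\le\ell\,\rk\cL_{\Lambda'}$ this forces equality in both, whence $\rk\cL_\Lambda(\uY_0)\neq\rk\cL_{\Lambda'}(\uY_0)$. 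But $\cL_\Lambda(\uY_0)=\Lambda(\underline{M}(\uY_0))$ and $\cL_{\Lambda'}(\uY_0)=\Lambda'(\underline{M}(\uY_0))$ for the single finite matrix tuple $\underline{M}(\uY_0)\in\cM^n$ (matrix evaluation composes), contradicting rank-equivalence of $\Lambda,\Lambda'$. Therefore $\Lambda,\Lambda'$ — hence $f,g$ — are stably associated.

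I expect the main obstacle to be the second stage: compressing the infinite-dimensional evaluation to a finite \emph{rectangular} tuple while keeping the kernel dimension \emph{exactly}, not merely bounding it. The crucial point is to record the action $X_i|_W\colon W\to W'$ into the slightly larger space $W'$ rather than projecting back onto $W$, which turns the a priori inclusion of kernels into an equality; Proposition \ref{p:pad} is then precisely the device that removes the resulting rectangularity, and everything afterwards is routine, the only wrinkle being the harmless constant term coming from the linearization.
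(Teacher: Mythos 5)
Your proposal follows the paper's proof almost exactly: linearize $f,g$ to full affine pencils, use Lemma \ref{l:infrank} to get an infinite-dimensional evaluation with distinct (finite) kernel dimensions, compress to a rectangular tuple $T$ by restricting $X_i$ to $W\to W'$, feed this into Proposition \ref{p:pad}, then use Lemma \ref{l:rankeval}. Your variants (normalizing both pencils to the same size $N$ and afterwards taking $\cL_\Lambda\oplus\cL_{\Lambda'}$, rather than setting $A_i=A_i'\oplus A_i''$ from the start) are cosmetic, and the additivity of inner rank over $\oplus$ that you invoke is correct.

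The genuine gap is in the phrase ``its evident version allowing a constant term,'' and it propagates into the final line ``$\cL_\Lambda(\uY_0)=\Lambda(\underline{M}(\uY_0))$.'' Proposition \ref{p:pad} is stated only for the \emph{homogeneous} linear $\Lambda=\sum_{i\ge1}A_ix_i$, while the companion linearization is affine. The paper handles this by introducing an extra index $i=0$ with $T_0=\left(\begin{smallmatrix}I_q\\0\end{smallmatrix}\right)$, so the proposition is applied to the homogenized pencil $\sum_{i\ge0}A_ix_i$ and the resulting $\cL$ retains free variables $y_{0\imath\jmath}$ in the $M_0$ block. The constant-term difficulty is only resolved at the very end: one chooses $\uY$ in the intersection of two Zariski open conditions (rank of the evaluation is maximal, and $W:=M_0(\uY)$ is invertible), and then sets $Z_j=W^{-1}\cdot(\text{the }j\text{-th block})$; the identity $\cL(\uY)=(I\otimes W)\Lambda(\uZ)$ is what makes $\rk\cL(\uY)=\rk\Lambda(\uZ)$. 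Your claimed ``affine version'' is the assertion that one may instead \emph{specialize} the $y_0$ variables to the constants making $M_0=I_{\tilde p}$ without the inner rank dropping. That is not a consequence of Proposition \ref{p:pad} as stated: specializing free variables to specific constants can decrease inner rank, and the ``good'' Zariski-open locus need not meet the affine slice $\{M_0=I\}$. The right-multiplication by $W^{-1}$ that would push a generic evaluation onto that slice destroys the block structure of the other $M_i$, so it cannot be absorbed into the hypotheses of the proposition either. Concretely, your equation $\cL_\Lambda(\uY_0)=\Lambda(\underline{M}(\uY_0))$ reads $A_0\otimes M_0(\uY_0)+\sum_{i\ge1}A_i\otimes M_i(\uY_0)=A_0\otimes I+\sum_{i\ge1}A_i\otimes M_i(\uY_0)$, which fails unless $M_0(\uY_0)=I$. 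The fix is to keep $y_0$ free, pick $\uY_0$ generic enough that $W=M_0(\uY_0)$ is invertible as well as rank-maximizing, and then pass to $\uZ=W^{-1}\underline{M}(\uY_0)$; this ``wrinkle'' is exactly the nontrivial dehomogenization step, not a harmless side remark.
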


\begin{proof}
The implication $(\Leftarrow)$ is clear by the definition of stable association. To prove $(\Rightarrow)$, assume $f$ and $g$ are not stably associated. 
By \cite[Theorem 5.8.3]{cohn}, $f$ is stably associated to a full affine matrix $\Lambda'=A_0'+\sum_iA_i'x_i$ with $A_j'\in\mtx{d'}$, and $g$ is stably associated to a full affine matrix $\Lambda''=A_0''+\sum_iA_i''x_i$ with $A_i''\in\mtx{d'}$.
Denote $d=d'+d''$ and $A_i=A_i'\oplus A_i''$.
By Lemma \ref{l:infrank}, $k'=\dim\ker f(\uX)$ and $k''=\dim\ker g(\uX)$ are finite and distinct for some tuple of operators $\uX$ on an infinite-dimensional space $V$.
note that $\dim\ker \Lambda'(\uX)=k'$ and $\dim\ker \Lambda''(\uX)=k''$.
Let $U$ be a finite-dimensional subspace of $V$ such that $\ker\Lambda'(\uX)+\ker\Lambda''(\uX)\subseteq \kk^d\otimes U$. Note that the subspace $\widehat{U}=\sum_j X_jU\subset V$ is finite-dimensional.
By looking at the restrictions $X_j|_U:U\to \widehat{U}$, and enlarging $U'$ if necessary, we obtain $T_1,\dots,T_n\in\kk^{p\times q}$ with $p\ge q=\dim U$ such that
$$\dim\ker \sum_{j=0}^n A_i'\otimes T_i\neq 
\dim\ker \sum_{j=0}^n A_i''\otimes T_i,$$
where $T_0=\left(\begin{smallmatrix}I\\0\end{smallmatrix}\right)$.
By Proposition \ref{p:pad} applied to $K=\sum_iA_i\otimes T_i$, the $\tilde pd\times \tilde pd$ affine matrix (with $\tilde p=p+(p-q)(d-1)$)
$$\sum_iA_i\otimes 
\begin{pmatrix}
\begin{matrix} T_i\\0\end{matrix} & 
\begin{matrix}
y_{i11}&\cdots&y_{i1(p-q)}\\
\vdots&\ddots&\vdots\\
y_{i\tilde p1}&\cdots&y_{i\tilde p(\tilde p-q)}
\end{matrix}
\end{pmatrix}
$$
has inner rank $\tilde pd-k'-k''$. By Lemma \ref{l:rankeval} there exists $\uY\in\mtx{\ell}^{(n+1)\tilde p(\tilde p-q)}$ such that
\begin{equation}\label{e:bigrank}
\rk
\sum_{i=0}^nA_i\otimes 
\begin{pmatrix}
\begin{matrix}T_i\otimes I_\ell\\0\end{matrix} & 
\begin{matrix}
Y_{i11}&\cdots&Y_{i1(\tilde p-q)}\\
\vdots&\ddots&\vdots\\
Y_{i\tilde p1}&\cdots&Y_{i\tilde p(\tilde p-q)}
\end{matrix}
\end{pmatrix}
=(\tilde pd-k'-k'')\ell.
\end{equation}
Since this holds for $\uY$ in a Zariski open subset of $\mtx{\ell}^{(n+1)\tilde p(\tilde p-q)}$, we can furthermore assume that the $(\tilde p-q)\times(\tilde p-q)$ matrix
$$\begin{pmatrix}
Y_{0(q+1)1}&\cdots&Y_{0(q+1)(\tilde p-q)}\\
\vdots&\ddots&\vdots\\
Y_{0\tilde p1}&\cdots&Y_{0\tilde p(\tilde p-q)}
\end{pmatrix}$$
is invertible. Therefore, the $\tilde p\ell\times \tilde p\ell$ matrix
$$W=\begin{pmatrix}
\begin{matrix}T_0\otimes I_\ell \\0\end{matrix} & 
\begin{matrix}
Y_{011}&\cdots&Y_{01(\tilde p-q)}\\
\vdots&\ddots&\vdots\\
Y_{0\tilde p1}&\cdots&Y_{0\tilde p(\tilde p-q)}
\end{matrix}
\end{pmatrix}$$
is invertible.
Note that \eqref{e:bigrank} implies
\begin{equation}\label{e:bigrank1}
\begin{split}
\rk \sum_{i=0}^nA_i'\otimes 
\begin{pmatrix}
\begin{matrix}T_i\otimes I_\ell\\0\end{matrix} & 
\begin{matrix}
Y_{i11}&\cdots&Y_{i1(\tilde p-q)}\\
\vdots&\ddots&\vdots\\
Y_{i\tilde p1}&\cdots&Y_{i\tilde p(\tilde p-q)}
\end{matrix}
\end{pmatrix}&= (\tilde pd'-k')\ell,\\
\rk \sum_{i=0}^nA_i''\otimes 
\begin{pmatrix}
\begin{matrix}T_i\otimes I_\ell\\0\end{matrix} & 
\begin{matrix}
Y_{i11}&\cdots&Y_{i1(\tilde p-q)}\\
\vdots&\ddots&\vdots\\
Y_{i\tilde p1}&\cdots&Y_{i\tilde p(\tilde p-q)}
\end{matrix}
\end{pmatrix}&= (\tilde pd''-k'')\ell.
\end{split}
\end{equation}
Let $\uZ\in\mtx{\tilde p\ell}^n$ be given as
$$Z_j=W^{-1}\begin{pmatrix}
\begin{matrix}T_i\otimes I_\ell\\0\end{matrix} & 
\begin{matrix}
Y_{i11}&\cdots&Y_{i1(\tilde p-q)}\\
\vdots&\ddots&\vdots\\
Y_{i\tilde p1}&\cdots&Y_{i\tilde p(\tilde p-q)}
\end{matrix}\end{pmatrix}$$
for $j=1,\dots,n$.
By \eqref{e:bigrank1},
\begin{align*}
\rk f(\uZ)
&=c\tilde p\ell-\dim\ker f(\uZ)=d'\tilde p\ell-(\tilde pd'-k')\ell=k'\ell\\
&\neq k''\ell=d''\tilde p\ell-(\tilde pd''-k'')\ell=c\tilde p\ell-\dim\ker g(\uZ)
=\rk g(\uZ),
\end{align*}
as desired.
\end{proof}

Theorem \ref{t:rankequiv} and \cite[Exercise 3.1.16]{cohn} (or \cite[Remark 4.1]{HKV0}) imply the following corollary for homogeneous polynomials. Nevertheless, we provide an alternative self-contained proof.

\begin{cor}
Let $f,g\in\px$ be homogeneous. Then $f,g$ are stably associated if and only if $g=\lambda f$ for some nonzero $\lambda\in\kk$.
\end{cor}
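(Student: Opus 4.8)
The plan is to prove the nontrivial implication $(\Rightarrow)$ by reducing it, via the comaximal-relation description of stable association, to a comparison of low-degree homogeneous components. The reverse implication is immediate: if $g=\lambda f$ with $\lambda\in\kk\setminus\{0\}$, then $g\oplus1=\mathrm{diag}(\lambda,1)\,(f\oplus1)\,I_2$ with $\mathrm{diag}(\lambda,1)\in\GL_2(\px)$, so $f$ and $g$ are stably associated.

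For $(\Rightarrow)$, I would first clear away the degenerate cases using Theorem~\ref{t:st_assoc}(ii). If $f=0$, then $\px/\px\cdot f\cong\px$ as a left module, hence also $\px/\px\cdot g\cong\px$, which forces $g=0$; and if $f\in\kk\setminus\{0\}$, then $\px/\px\cdot f=0$, hence $\px/\px\cdot g=0$, which forces $g\in\kk\setminus\{0\}$. In either case $g=\lambda f$ for a suitable $\lambda\in\kk\setminus\{0\}$, and the situation is symmetric if instead $g\in\kk$. So I may assume $f,g\in\px\setminus\kk$; then $\deg f=\deg g=:m\ge1$ by Lemma~\ref{l:comax_bds}(a). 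By Theorem~\ref{t:st_assoc}, stable association of $f$ and $g$ provides a comaximal relation $fc=bg$ in $\px$; in particular $f\px+b\px=\px$, so $fu+bv=1$ for some $u,v\in\px$.

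Now write $b=b_0+b_1+\cdots$ and $c=c_0+c_1+\cdots$ for the decompositions into homogeneous components of degrees $0,1,\dots$. The key step is to read off $b_0\neq0$: comparing constant terms in $fu+bv=1$ and using that $f$ is homogeneous of degree $m\ge1$, so its constant term vanishes, we obtain $b_0v_0=1$. Next, compare the degree-$m$ homogeneous components of $fc=bg$: since $f$ is homogeneous of degree $m$, the degree-$m$ part of $fc$ is $fc_0$, and since $g$ is homogeneous of degree $m$, the degree-$m$ part of $bg$ is $b_0g$; hence $fc_0=b_0g$. As $b_0,c_0\in\kk$ are central and $b_0\neq0$, this gives $g=(b_0^{-1}c_0)f$, and since $g\neq0$ the scalar $\lambda:=b_0^{-1}c_0$ is nonzero, which would finish the proof.

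The only substantive point is extracting $b_0\neq0$ from comaximality; the rest is bookkeeping with homogeneous components. This is exactly where the hypothesis $\deg f\ge1$ is indispensable — for $\deg f=0$ the constant term of $f$ survives and the identity $fu+bv=1$ no longer pins down $b_0$ — which is why the reductions to $f,g\notin\kk$ and to $\deg f=\deg g$ come first. Beyond keeping the side of comaximality straight (the \emph{right} ideal $f\px+b\px$), I do not anticipate any real obstacle.
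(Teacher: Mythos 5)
Your proof is correct, and it takes a genuinely different route from the one in the paper. The paper's proof passes to the finite-dimensional quotient algebra $\cA=\px/(J_g+\fm_d)$, observes that the left-multiplication operator $L_g$ vanishes on $\cA$, and then invokes \emph{rank equivalence} (the easy consequence of stable association) to force $L_f=0$ as well, whence $f\in J_g+\fm_d$ and homogeneity gives $f\in J_g$, and by symmetry $J_f=J_g$. Your argument instead stays entirely inside the free algebra: you take a comaximal relation $fc=bg$ from Theorem~\ref{t:st_assoc}(iii), extract $b_0\neq 0$ from the degree-zero component of $fu+bv=1$ (using $\deg f\ge1$), and then read off $fc_0=b_0 g$ from the degree-$m$ component of $fc=bg$, which yields $g=(b_0^{-1}c_0)f$. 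Both proofs are short. Yours is somewhat more elementary and self-contained — it uses only the comaximal characterization and the grading on $\px$, with no appeal to matrix evaluations or to the rank-equivalence observation — whereas the paper's proof slots naturally into the local-global theme of the section and re-uses machinery already in play. Your handling of the degenerate cases $f\in\kk$ via Theorem~\ref{t:st_assoc}(ii) (and invariant basis number for the case $f=0$) is sound, and the reduction to $\deg f=\deg g\ge 1$ via Lemma~\ref{l:comax_bds}(a) correctly licenses the later degree bookkeeping.
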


\newcommand{\fm}{\mathfrak{m}}

\begin{proof}
Let $J_f$ and $J_g$ be the two-sided ideals in $\px$ generated by $f$ and $g$, respectively.
For $d =\max\{\deg f, \deg g\}+1$ let $\fm_d$ be the ideal generated by all the monomials of degree $d$.
Consider the finite-dimensional $\kk$-algebra $\cA = \px/(J_g + \fm_d)$. For $h \in \px$, let $L_h$ be the operator of left multiplication by $h$ on $\cA$. We note that $L_h = h(L_{x_1},\ldots,L_{x_d})$. Hence, in particular, $L_g = 0$. Since the polynomials $f$ and $g$ are rank-equivalent, $L_f = 0$.
This means that $f \in J_g + \fm_d$. However, since $f$ and $g$ are both homogeneous and $d$ is larger than their degrees, we have $f \in J_g$. Analogously, $g \in J_f$, and thus $J_f = J_g$. This immediately implies that $g = \lambda f$ for some $\lambda\in\kk\setminus\{0\}$.
\end{proof}

\subsection{Relation with joint similarity of matrices}

Let us draw a parallel with the main result of \cite{DKMV} on joint similarity of matrix tuples and ranks of corresponding linear pencils.
By \cite[Theorem 1.1]{DKMV}, two tuples of $c\times c$ matrices $(A_1,\dots,A_n)$ and $(B_1,\dots,B_n)$ are jointly similar (meaning that there is $P\in\GL_c(\kk)$ such that $B_j=PA_jP^{-1}$ for all $j$) if and only if
\begin{equation}\label{e:derksen1}
\rk\big(I\otimes X_0+A_1\otimes X_1+\cdots+A_n\otimes X_n\big)=
\rk\big(I\otimes X_0+B_1\otimes X_1+\cdots+B_n\otimes X_n\big)
\end{equation}
for all tuples of square matrices $\uX=(X_0,\dots,X_n)$ (moreover, an upper bound on the dimension of $\uX$ is given). 
Theorem \ref{t:rankequiv} recovers this statement (without the dimension bound). 
Indeed, if $Ix_0+\sum_jA_jx_j$ and $Ix_0+\sum_jB_jx_j$ are rank-equivalent, then they are stably associated by Theorem \ref{t:rankequiv}. Since they are full and homogeneous, they are conjugated by a constant matrix by \cite[Theorem 5.8.3]{cohn}.

In general, $\uA$ and $\uB$ are not jointly similar if \eqref{e:derksen1} holds only when $X_0=I$ (e.g., take $n=1$, $A_1=\left(\begin{smallmatrix}
0&1\\0&0\end{smallmatrix}\right)$ and $B_1=\left(\begin{smallmatrix}
0&0\\0&0\end{smallmatrix}\right)$).
The following consequence of Theorem \ref{t:rankequiv} examines the effect of restricting to $X_0=I$ in \eqref{e:derksen1}.

\begin{cor}
Let $\uA=(A_1,\dots,A_n),\uB=(B_1,\dots,B_n)\in\mtx{c}^n$.
Then
\begin{equation}\label{e:derksen2}
\rk\big(I\otimes I+A_1\otimes X_1+\cdots+A_n\otimes X_n\big)=
\rk\big(I\otimes I+B_1\otimes X_1+\cdots+B_n\otimes X_n\big)
\end{equation}
for all $\uX=(X_1,\dots,X_n)$ if and only if  $I+\sum_jA_jx_j$ and $I+\sum_jB_jx_j$ are stably associated.

Furthermore, if
$$\bigcap_j \ker A_j=\bigcap_j \ker A_j^\top =
\bigcap_j \ker B_j=\bigcap_j \ker B_j^\top =\{0\},$$
then \eqref{e:derksen2} holds for all $\uX$ if and only if $\uA$ and $\uB$ are jointly similar.
\end{cor}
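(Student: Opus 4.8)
The plan is to deduce the first equivalence directly from Theorem \ref{t:rankequiv} and then upgrade it, under the stated nondegeneracy hypothesis, to an honest joint similarity statement. For the first equivalence: the pencils $\Lambda_A := I + \sum_j A_j x_j$ and $\Lambda_B := I + \sum_j B_j x_j$ are full over $\px$ (they are invertible at the origin $\uX=0$, where they evaluate to the identity, so Lemma \ref{l:rankeval} forces inner rank $c$). Condition \eqref{e:derksen2} says precisely that $\Lambda_A$ and $\Lambda_B$ are rank-equivalent over $\cM^n$, so Theorem \ref{t:rankequiv} gives that they are stably associated; the converse is immediate from the definition of stable association. This part is essentially formal.

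The substantive part is the ``furthermore''. Here I would argue that, under the hypothesis $\bigcap_j\ker A_j=\bigcap_j\ker A_j^\top=\{0\}$, the affine pencil $\Lambda_A=I+\sum_j A_j x_j$ determines the homogeneous pencil $I x_0 + \sum_j A_j x_j$ up to constant conjugation, and similarly for $B$. Given stable association of $\Lambda_A$ and $\Lambda_B$, both being full affine (indeed linear in $x_1,\dots,x_n$ with invertible constant term) matrices of the same size $c$, \cite[Theorem 5.8.3]{cohn} (as invoked in the paragraph before the corollary) yields constant invertible matrices $P,Q\in\GL_c(\kk)$ with $\Lambda_B = P\,\Lambda_A\,Q$. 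Matching the degree-zero terms gives $I = PQ$, i.e.\ $Q=P^{-1}$; matching the coefficient of $x_j$ gives $B_j = P A_j P^{-1}$ for all $j$, which is exactly joint similarity. Conversely, joint similarity trivially implies \eqref{e:derksen2}. Thus in fact one gets the cleaner statement that \emph{whenever} $\Lambda_A$ and $\Lambda_B$ are stably associated one already has joint similarity, without needing the kernel hypothesis at all for this direction --- so I should double-check whether the hypothesis is really needed, or whether the authors include it only to make the statement symmetric with the companion discussion of \eqref{e:derksen1}.

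The point where care is genuinely required is the appeal to \cite[Theorem 5.8.3]{cohn}: that theorem concerns stable association of full \emph{linear} (homogeneous degree-one) matrix pencils and asserts they are equivalent over $\GL(\kk)$ iff they are $\GL(\kk)$-conjugate after the canonical block completion. For affine pencils one must either homogenize --- replacing $x_j$ by $x_j$ and the constant $I$ by $Ix_0$, checking that stable association of the affine pencils over $\px$ lifts to stable association of the homogeneous pencils over $\kk\!\Langle x_0,\ux\Rangle$ --- or invoke directly the statement (already used in the text above for \eqref{e:derksen1}) that full affine matrices stably associated over $\px$ with invertible constant terms differ by constant left/right multiplication. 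I expect the main obstacle to be making this homogenization/linearization step airtight: one has to rule out that the transition matrices $P,Q$ witnessing stable association could be non-constant, and the clean way to do this is to note that $\Lambda_A$ and $\Lambda_B$ are full linear matrices after homogenization and then quote \cite[Theorem 5.8.3]{cohn} exactly as in the preceding paragraph of the paper, finally specializing $x_0 \mapsto 1$. The kernel hypothesis, if it plays a role, would enter in guaranteeing that the homogenized pencil $Ix_0 + \sum_j A_j x_j$ is itself full (no nontrivial common kernel of the coefficient matrices), which is what licenses the conjugation statement; I would state this as a short lemma or remark and then the corollary follows.
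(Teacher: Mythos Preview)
Your first part matches the paper exactly: fullness is clear from evaluation at $0$, and Theorem~\ref{t:rankequiv} does the rest.

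For the second part, your overall strategy---invoke \cite[Theorem 5.8.3]{cohn} to get constant $P,Q$, then compare coefficients---is the paper's strategy too. But your speculation that the kernel hypothesis might be superfluous is wrong, and your guess about \emph{where} it enters is off. Take $n=1$, $c=2$, $A_1=\left(\begin{smallmatrix}0&1\\0&0\end{smallmatrix}\right)$, $B_1=0$. Then $I+A_1x_1=\left(\begin{smallmatrix}1&x_1\\0&1\end{smallmatrix}\right)$ is invertible over $\px$, hence stably associated to $I_2=I+B_1x_1$; yet $A_1$ and $B_1$ are not similar. So stable association alone does \emph{not} force constant $P,Q$.

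The role of the kernel hypothesis is not fullness of the homogenized pencil $Ix_0+\sum_jA_jx_j$---that pencil is always full, since its $x_0$-coefficient is $I$. Rather, the hypothesis is precisely the \emph{minimality} (left/right monic) condition in Cohn's theorem: it says the linear part $\sum_jA_jx_j$ has no nonzero constant row or column annihilator, which is what rules out reducing the pencil to a smaller size and is what makes the constant-equivalence conclusion of \cite[Theorem 5.8.3]{cohn} available. (In the counterexample above, $\ker A_1\neq\{0\}$ and $\ker A_1^\top\neq\{0\}$, and the pencil reduces to the $0\times 0$ empty pencil.) No homogenization is needed; Cohn's statement already applies to affine pencils, and the paper invokes it directly. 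Once you have $\Lambda_B=P\Lambda_A Q$ with constant $P,Q$, your coefficient comparison finishing the argument is correct.
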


\begin{proof}
The first part is an immediate consequence of Theorem \ref{t:rankequiv} since $F=I+\sum_jA_jx_j$ and $G=I+\sum_jB_jx_j$ are full.
By \cite[Theorem 5.8.3]{cohn} and the kernel assumption on $\uA$ an $\uB$, the affine matrices $F$ and $G$ are stably associated if and only if and only if $GQ=PF$ for some $P,Q\in\GL_c(\kk)$, in which case $Q=P$ and $B_j=PA_jP^{-1}$.
\end{proof}

\subsection{Stable association of powers}

If $f,g\in\px$ are rank-equivalent, it is not necessarily true that their powers are likewise rank-equivalent (that is, while $f$ and $g$ may always have the same number of Jordan blocks at 0, these blocks might not have the same sizes). 
For example, let $f=xyxy+xy+x$ and $g=xy^2x+xy+x$. Then $f$ and $g$ are stably associated, and therefore rank-equivalent, by $f(yx+1)=(xy+1)g$ and Theorem \ref{t:st_assoc}.
On the other hand, $f^2$ and $g^2$ are not rank-equivalent, and thus also not stably associated by Theorem \ref{t:rankequiv}. Indeed, denote
$$X=\begin{pmatrix}1&0\\0&0\end{pmatrix},\quad 
Y=\begin{pmatrix}0&1\\-1&0\end{pmatrix}.$$
Then $f(X,Y)^2\neq0$ and $g(X,Y)^2=0$.

On the other hand, the converse of the statement holds (and is useful in Section \ref{sec:sim} below).

\begin{lem}\label{l:power}
Let $f,g\in\px$. If $f^k$ and $g^k$ are stably associated for some $k\in\N$, then $f$ and $g$ are stably associated.
\end{lem}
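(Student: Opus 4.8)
\textbf{The plan.} The natural approach is to use the module-theoretic characterization of stable association from Theorem \ref{t:st_assoc}: $f$ and $g$ are stably associated if and only if $R/Rf\cong R/Rg$ as left $R$-modules, where $R=\px$. So it suffices to deduce $R/Rf\cong R/Rg$ from $R/Rf^k\cong R/Rg^k$. The key observation is that $R/Rf$ sits inside $R/Rf^k$ in a canonical way: multiplication on the right by $f^{k-1}$ induces an injection $R/Rf\hookrightarrow R/Rf^k$ whose image is $Rf^{k-1}/Rf^k$, and more generally the chain $Rf^k\subseteq Rf^{k-1}\subseteq\cdots\subseteq Rf\subseteq R$ gives a filtration of $M:=R/Rf^k$ all of whose successive quotients are isomorphic to $R/Rf$. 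The idea, then, is to recover $R/Rf$ from $M$ in an isomorphism-invariant way, so that $M\cong N:=R/Rg^k$ forces $R/Rf\cong R/Rg$.

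\textbf{Key steps.} First I would record that right multiplication $\rho_{f^{k-1}}\colon R\to R$ descends to an $R$-linear map $R/Rf\to R/Rf^k$ (since $h\in Rf\Rightarrow hf^{k-1}\in Rf^k$) that is injective (if $hf^{k-1}\in Rf^k$ then $hf^{k-1}=h'f^k$, and cancelling $f^{k-1}$ on the right in the domain $R$, which is an integral domain, gives $h=h'f\in Rf$). Thus $R/Rf\cong Rf^{k-1}/Rf^k$, the bottom nonzero layer of the filtration $\{Rf^i/Rf^k\}_{i=0}^k$ of $M$. Now the socle-type / torsion-length structure: since $E(f^k)$ is finite-dimensional over $\kk$ (Theorem \ref{t:eigering}(a)), $M$ is a module of finite length over $R$ (this is \cite[Theorem 5.8.5]{cohn}-type finiteness, or follows because $R/Rf^k$ is finite length, its irreducible factors corresponding to a complete factorization of $f^k$). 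The cleanest invariant characterization I would aim for: $Rf^{k-1}/Rf^k$ is the set of elements of $M$ annihilated (on the right, i.e. in the module structure, by $f$)—more precisely $Rf^{k-1}/Rf^k=\{m\in M: Rf\cdot m\text{-style condition}\}$. Let me instead use the cleaner formulation via the endomorphism $\varphi\in\End_R(M)$ given by right multiplication by $f$ (well-defined since $f\cdot Rf^k\subseteq Rf^k$... careful: right multiplication is not left-$R$-linear unless $f$ centralizes, which it need not).

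So the honest route avoids right multiplication by $f$ as an endomorphism and instead argues via uniqueness of factorization. Here is the step I would actually carry out: by \cite[Proposition 3.2.9]{cohn}, write a complete factorization $f=p_1\cdots p_r$ into irreducibles; then $f^k=(p_1\cdots p_r)^k$ is a complete factorization of $f^k$, and similarly $g^k=(q_1\cdots q_s)^k$ from a complete factorization $g=q_1\cdots q_s$ of $g$. Stable association of $f^k$ and $g^k$ means, by the Jordan–Hölder / Malcev uniqueness of complete factorizations up to permutation and stable association of factors (Theorem \ref{t:st_assoc} applied layer by layer to the chain, together with \cite[Proposition 3.2.9]{cohn}), that the multiset of stable-association classes of $\{p_i\}$ repeated $k$ times equals that of $\{q_j\}$ repeated $k$ times. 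Cancelling the common multiplicity $k$ (a multiset equality of the form $k\cdot A=k\cdot B$ forces $A=B$), the multiset of stable-association classes of $p_1,\dots,p_r$ equals that of $q_1,\dots,q_s$; in particular $r=s$, $\deg f=\sum\deg p_i=\sum\deg q_j=\deg g$, and after reindexing $p_i$ is stably associated to $q_i$ for each $i$. This does not yet give that $f$ and $g$ are stably associated—stable association is not determined by the unordered multiset of factor classes in general—so I need the isomorphism $M\cong N$, not merely its consequences for factor multisets.

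\textbf{The main obstacle and how I expect to handle it.} The crux is that the multiset of composition factors of $R/Rf^k$ does not determine $R/Rf^k$, so I must transport the \emph{actual} isomorphism $M\xrightarrow{\sim} N$ down to the bottom layers. The mechanism: an isomorphism $\psi\colon M\to N$ of finite-length modules sends the unique \emph{largest submodule isomorphic to a quotient of} (appropriate characterization) ... concretely, I would identify $Rf^{k-1}/Rf^k\subseteq M$ intrinsically as $\bigcap\{\ker\theta:\theta\in\Hom_R(M,R/Rf^{k-1})\text{ ... }\}$—or, more robustly, as follows. Note $M/(Rf^{k-1}/Rf^k)\cong R/Rf^{k-1}$, and iterate: $M$ has a unique filtration $0=M_0\subset M_1\subset\cdots\subset M_k=M$ with $M_i=Rf^{k-i}/Rf^k$ and each $M/M_i\cong R/Rf^{k-i}$, characterized by $M_i$ being the smallest submodule with $M/M_i$ cyclic-of-a-certain-length; since $\psi$ is an isomorphism it carries this canonical filtration on $M$ to the analogous one on $N$, hence $M_1\cong N_1$, i.e. $R/Rf\cong R/Rg$, and we conclude by Theorem \ref{t:st_assoc}. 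The delicate point I anticipate spending the most care on is pinning down a genuinely isomorphism-invariant description of the submodule $Rf^{k-1}/Rf^k$ inside $M$ (equivalently, showing the tower of "multiplication by powers of $f$" is intrinsic); the fallback, if a slick invariant description is elusive, is to argue by induction on $k$ using that $Rf/Rf^k\cong R/Rf^{k-1}$ (via right multiplication by $f$, which \emph{is} left-$R$-linear as a map $R\to R$ and descends appropriately) together with the finite length to run a Krull–Schmidt / cancellation argument. I'd expect the induction-plus-cancellation version to be the safest to write out in full.
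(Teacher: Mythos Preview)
Your setup matches the paper's: both start from the isomorphism $\psi\colon R/Rf^k\to R/Rg^k$ and the chain
\[
R/Rf^k\supset Rf/Rf^k\supset\cdots\supset Rf^{k-1}/Rf^k\supset\{0\}
\]
with successive quotients $\cong R/Rf$. The gap is in the step where you try to argue that this filtration is \emph{intrinsic}. It is not. Submodules of the cyclic module $R/Rf^k$ are exactly $Rh/Rf^k$ for left factors $h$ of $f^k$, and the quotient $M/(Rh/Rf^k)\cong R/Rh$ is always cyclic; so ``smallest submodule with cyclic quotient of a given length'' does not single out $Rf^{k-1}/Rf^k$, because $f^k$ typically has many left factors with the prescribed number of irreducible factors. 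Concretely, $\psi$ carries $Rf^{k-1}/Rf^k$ to $Rh/Rg^k$ for \emph{some} left factor $h$ of $g^k$ with cofactor of the correct length, but there is no reason $h=g^{k-1}$. (You in fact already noticed this phenomenon one paragraph earlier, when you correctly observed that the multiset of stable-association classes of irreducible factors does not determine the module; the same failure blocks the ``intrinsic filtration'' argument.) Your fallback of Krull--Schmidt/cancellation does not apply either: there is no direct-sum decomposition in sight to cancel from, and $R/Rf^k$ can well be indecomposable (e.g.\ $f$ irreducible).

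What the paper does with the transported filtration is different and is the missing idea. The image chain in $R/Rg^k$ corresponds (by the submodule $\leftrightarrow$ left-factor dictionary, \cite[Section 3.2]{cohn}) to a factorization $g^k=f_1\cdots f_k$ with each $f_j$ stably associated to $f$; in particular $g$ and $f_1$ have the same multiset of irreducible factors up to stable association. The paper then runs a short factorization argument in $R$: take $\ell$ the highest common left factor of $g$ and $f_1$; if $g=\ell g'$ and $f_1=\ell f_1'$ with $g',f_1'$ nonconstant, pick an irreducible left factor $\ell'$ of $g'$, find its stably associated companion $\ell''$ inside $f_1'$, and use the strong distributive factor lattice property of $\px$ (\cite[Corollary 4.3.4, Theorem 4.2.3]{cohn}) on the equality $\ell' (g'' g^{k-1}) = a\ell'' (b f_2\cdots f_k)$ to force $a\ell''\in \ell'R$, contradicting maximality of $\ell$. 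Hence $g$ and $f_1$ are associates, so $g$ is stably associated to $f$. The point is that one cannot avoid a genuinely ring-theoretic argument in $\px$ after transporting the chain; the isomorphism alone does not align the two power-filtrations.
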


\begin{proof}
Since $f^k$ and $g^k$ are stably associated, there is an isomorphism of left $R$-modules $\phi:R/Rf^k\to R/Rg^k$. Consider the chain of submodules
$$R/Rf^k \supset Rf/Rf^k\supset \cdots\supset  Rf^{k-1}/Rf^k\supset \{0\},$$
where the quotient of any two consecutive terms is isomorphic to $R/Rf$. The image of this chain in $R/Rg^k$,
$$\phi(R/Rf^k) \supset \phi(Rf/Rf^k)\supset \cdots\supset  \phi(Rf^{k-1}/Rf^k)\supset \{0\},$$
corresponds to a factorization $g^k=f_1\cdots f_k$ where each $f_j$ is stably associated to $f$ by \cite[Section 3.2]{cohn}. If two polynomials are stably associated, then their irreducible factors can be grouped into stably associated pairs. Thus, $g,f_1,\dots,f_k,f$ all have the same irreducible factors up to stable association (and ordering in a complete factorization). Let $\ell$ be the highest common left factor of $g$ and $f_1$. If $g$ and $f_1$ are scalar multiples of $\ell$, then $g$ and $f$ are stably associated, as desired.

Suppose that $g=\ell g'$ and $f_1=\ell f_1'$ for nonconstant $g',f_1'\in R$. Let $\ell'$ be an irreducible left factor of $g'$, and write $g'=\ell' g''$. Then $\ell'$ is stably associated to an irreducible factor $\ell''$ of $f_1'$, and write $f_1'=a\ell'' b$. The relation
$$ \ell'\cdot (g''g^{k-1})=\ell^{-1}g^k=\ell^{-1}f_1\cdots f_k = a\ell''\cdot (bf_1' f_2\cdots f_k)$$
shows that $\ell' R\cap (a\ell'')R\neq\{0\}$. Since $\ell'$ and $\ell''$ are stably associated, \cite[Corollary 4.3.4 and Theorem 4.2.3(a)]{cohn} implies that $a\ell''\in \ell' R$. Thus, $\ell \ell'$ is a common left factor of $g$ and $f_1$, contradicting the choice of $\ell$.
\end{proof}

\section{Isospectrality}\label{sec:eig}

We say that $f,g\in\px$ are \emph{intertwined} if there exists a nonzero $a\in\px$ (called an intertwiner) such that $fa=ag$. By \cite[Corollary 4.4]{vol20}, two polynomials are isospectral if and only if they are intertwined. In this section, we revisit and strengthen this result, provide a new characterization of isospectrality in terms of elementary intertwinedness, and derive some consequences of isospectrality that are used in the next section.

\subsection{Elementary intertwinedness and isospectrality}

Let $\lambda\in\kk$ and $a,b\in\px$. We say that noncommutative polynomials $\lambda+ab$ and $\lambda+ba$ are \emph{elementary intertwined}. Note that $(\lambda+ab)a=a(\lambda+ba)$, so elementary intertwinedness implies intertwinedness. 

\begin{thm}\label{t:isospec}
The following are equivalent for $f,g\in\px$:
\begin{enumerate}[(i)]
\item for infinitely many $k\in\N$ there is a Zariski dense set $\cD_k\subseteq\mtx{k}^n$ such that $f(\uX)$ and $g(\uX)$ share an eigenvalue for every $\uX\in \cD_k$;
\item $f$ and $g$ are isospectral;
\item $f$ and $g$ are intertwined;
\item there exist $\ell\in\N$ and $f_1,\dots,f_{\ell}\in \px$ with $f_1=f$ and $f_{\ell}=g$ such that $f_i$ and $f_{i+1}$ are elementary intertwined for all $1\le i\le \ell-1$.
\end{enumerate}
\end{thm}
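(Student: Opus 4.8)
The reverse implications are trivial, and the equivalence (ii)$\Leftrightarrow$(iii) is \cite[Corollary 4.4]{vol20}. So it remains to prove (i)$\Rightarrow$(iii), (iii)$\Rightarrow$(iv) and (iv)$\Rightarrow$(iii). The last is immediate: if $f_i=\lambda_i+a_ib_i$ and $f_{i+1}=\lambda_i+b_ia_i$ then $f_ia_i=a_if_{i+1}$ (and $f_i=f_{i+1}$ if $a_i=0$), so consecutive $f_i$ are intertwined; since $\px$ is a domain the product of the intertwiners is nonzero, and it intertwines $f$ with $g$. Together with the cited equivalence this closes the cycle once (i)$\Rightarrow$(iii) and (iii)$\Rightarrow$(iv) are established, and (ii)$\Rightarrow$(i) is obvious.

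For (iii)$\Rightarrow$(iv), suppose $fa=ag$ with $a\neq0$; I argue by induction on the number of irreducible factors of $a$. If $a$ is a nonzero constant, then $f=g$ and the one-term chain works. If $a$ is irreducible, then right multiplication by $g$ is a well-defined left-$\px$-module endomorphism $\rho_g$ of $\px/\px a$ (since $rag=r(fa)\in\px a$), hence $\rho_g\in E(a)=\kk$ by Theorem \ref{t:eigering}(b); so $\rho_g=\mu\cdot\mathrm{id}$ for some $\mu\in\kk$, i.e.\ $g=\mu+ba$ for some $b\in\px$, and then $ag=\mu a+aba=(\mu+ab)a$, which with $fa=ag$ and cancellation in $\px$ gives $f=\mu+ab$; thus $f$ and $g$ are elementary intertwined. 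For general $a$, $\rho_g$ is still an element of the finite-dimensional $\kk$-algebra $E(a)$ (Theorem \ref{t:eigering}(a)); as $\kk$ is algebraically closed it has an eigenvalue $\mu$, and if $\rho_g=\mu\cdot\mathrm{id}$ the previous computation shows $f,g$ are already elementary intertwined. Otherwise one uses the idempotent (primary) decomposition of the commutative subalgebra $\kk[\rho_g]\subseteq E(a)$ to split $\px/\px a$ into $\rho_g$-invariant summands on each of which $g$ acts with a single eigenvalue; since these summands are cyclic, Cohn's theory identifies them as $\px/\px a_i$ with $a_i$ of strictly smaller factor-length, equipped with compatible polynomials giving a chain $f=g_0,g_1,\dots,g_\ell=g$ with $e_ig_i=g_{i-1}e_i$, and the inductive hypothesis (or the irreducible case applied to each $e_i$) finishes. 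The technical point is that an arbitrary submodule of $\px/\px a$ need not be of the form $\px/\px b$, which forces one to work through the $E(a)$-decomposition rather than to factor $a$ naively.

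The main work is (i)$\Rightarrow$(iii). ``$f(\uX)$ and $g(\uX)$ share an eigenvalue'' is the vanishing of the resultant of the two characteristic polynomials, hence a Zariski-closed condition on $\uX\in\mtx k^n$; so the existence of the dense set $\cD_k$ forces it to hold for \emph{all} $\uX\in\mtx k^n$, for infinitely many (in particular arbitrarily large) $k$. The remaining step — promoting this to an intertwining relation — is the crux. A common eigenvalue of $f(\uX)$ and $g(\uX)$, together with corresponding left and right eigenvectors, yields a nonzero rank-one matrix $A$ with $f(\uX)A=Ag(\uX)$; averaging these rank-one intertwiners over the Galois conjugates of the (algebraic, $\uX$-dependent) common eigenvalue produces a nonzero intertwiner rational in $\uX$, i.e.\ an element $B$ of the free skew field with $fB=Bg$, which one clears of denominators using the factorization theory of the fir $\px$ to obtain $a\in\px\setminus\{0\}$ with $fa=ag$. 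The subtlety — and the reason Lemmas \ref{l:rankeval} and \ref{l:infrank} do not apply off the shelf — is that the relevant singular evaluations lie on the thin ``Kronecker'' subvariety $\{(\uX\otimes I,\ I\otimes\uX)\}\subseteq\mtx{k^2}^{2n}$ rather than on all matrix tuples, so the argument must genuinely exploit the spectral structure (a common eigenvalue, not merely a common kernel); alternatively one can feed the reduced hypothesis into the proof of \cite[Corollary 4.4]{vol20}. I expect this bridging step, from generic eigenvalue-sharing in infinitely many dimensions to an honest intertwiner, to be the principal obstacle.
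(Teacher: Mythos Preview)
Your (iii)$\Rightarrow$(iv) has the right skeleton but a genuine gap in the inductive step. The primary decomposition of $\kk[\rho_g]$ splits $R/Ra$ only according to the \emph{distinct} eigenvalues of $\rho_g$; on each summand $\rho_g-\mu$ is nilpotent, not zero, so your ``previous computation'' (which requires $g-\mu\in Ra$, i.e.\ $\rho_g=\mu\cdot\mathrm{id}$ exactly) does not apply and no factor-length reduction occurs. The case where $\rho_g$ has a single eigenvalue but $(\rho_g-\mu)\neq0$ really arises (e.g.\ $a=f^2$, $g=f$), so this is not a phantom. The paper's argument avoids decomposing: pick any $\lambda$ with $g-\lambda$ a zero divisor in $E(a)$; one checks that $R(g-\lambda)+Ra$ is then a proper left ideal, hence equals $Ra_1$ for some nonconstant $a_1$ by the $2$-fir property, with $g-\lambda=ba_1$ and $a=\tilde a\,a_1$. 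Setting $\tilde g=\lambda+a_1b$, one has $g,\tilde g$ elementary intertwined via $a_1$, and $f\tilde a=\tilde a\,\tilde g$ with $\tilde a$ strictly shorter. This peels off one factor at a time and works whether or not $\rho_g$ is semisimple.

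Your (i)$\Rightarrow$(iii) sketch does not close, and the paper takes an entirely different route. Averaging rank-one intertwiners over Galois conjugates at a fixed size $k$ lands you at best in the generic matrix ring (or $\operatorname{UD}_k$), not in the free skew field; there is no mechanism to lift such an element back to $\px$, since the embedding $\px\hookrightarrow\operatorname{UD}_k$ has very thin image. The paper instead proves (i)$\Rightarrow$(ii) directly: for infinitely many $k$, pass to generic matrices $\underline{\Omega}$ and the center $\cZ$ of $\operatorname{UD}_k$. The characteristic polynomials $\chi_f,\chi_g\in\cZ[t]$ have pairwise distinct roots (by \cite[Corollary 2.10]{BreVol}), hence coincide with the minimal polynomials of $f(\underline{\Omega}),g(\underline{\Omega})$, hence are \emph{irreducible} over $\cZ$ because $\cZ[t]/(\chi_f)\cong\cZ[f(\underline{\Omega})]\subset\operatorname{UD}_k$ is a field. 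Your Zariski-closure/resultant observation then forces $\chi_f$ and $\chi_g$ to share a root in $\overline{\cZ}$, so by irreducibility $\chi_f=\chi_g$. This gives full isospectrality on $\mtx{k}^n$ for infinitely many $k$, hence everywhere; the cited (ii)$\Leftrightarrow$(iii) then produces the intertwiner.
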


\begin{proof}
The equivalence (ii)$\Leftrightarrow$(iii) is \cite[Corollary 4.4]{vol20}.
The implication (ii)$\Rightarrow$(i) is trivial, and the implication (iv)$\Rightarrow$(ii) follows by (iii)$\Leftrightarrow$(ii) and transitivity of isospectrality.

(iii)$\Rightarrow$(iv): Let nonzero $a\in R$ be such that $fa=ag$. 
We prove the statement by induction on the number of irreducible factors of $a$.

If $a$ is irreducible, then $E(a)=\kk$ by Theorem \ref{t:eigering}(b). Therefore $f=\lambda+ab$ and $g=\lambda+ba$ for some $\lambda\in\kk$ and $b\in R$.

Now suppose that $a$ is not irreducible. Note that $g$ represents an element of $E(a)$. Since $E(a)$ is finite-dimensional by Theorem \ref{t:eigering}(a), there exists $\lambda\in\kk$ such that $g-\lambda$ is a zero divisor in $E(a)$. Thus there exists $c\in R\setminus R a$ such that $ac\in R a$ and $(g-\lambda)c\in R a$.
If $1\in R(g-\lambda)+R a$, then
$c\in R(g-\lambda)c+R a c\subseteq R a$, a contradiction. Therefore, $R(g-\lambda)+R a$ is a proper left ideal in $R$. Since $0\neq (f-\lambda)a=a(g-\lambda)\in R(g-\lambda)\cap R a$, it follows by \cite[Theorem 2.3.7]{cohn} that $R(g-\lambda)+R a = R a_1$ for some nonconstant $a_1\in R$. Therefore $g-\lambda=ba_1$ and $a=\tilde{a}a_1$ for some $b,\tilde{a}\in R$. Let $\tilde{g}=\lambda+a_1b$. Then $g$ and $\tilde{g}$ are elementary intertwined, and
$$f\tilde{a}a_1=fa=ag
=\tilde{a}a_1(\lambda+ba_1)
=\tilde{a}(\lambda+a_1b)a_1
=\tilde{a}\tilde{g}a_1,$$
so $f\tilde{a}=\tilde{a}\tilde{g}$. Since $\tilde{a}$ has less irreducible factors than $a$, there is a sequence of elementary intertwined polynomials between $f$ and $\tilde{g}$ by the induction hypothesis. Hence, there is a sequence of elementary intertwined polynomials between $f$ and $g$.

(i)$\Rightarrow$(ii): The implication is trivial if either $f$ or $g$ is constant. Thus, assume $f,g\in R\setminus\kk$.
Let $\cO_k$ denote the set of $\uX\in\mtx{k}^n$ such that $f(\uX)$ and $g(\uX)$ each have $n$ pairwise distinct eigenvalues. For all large enough $k\in\N$, $\cO_k$ is a nonempty Zariski open subset of $\mtx{k}^n$ by \cite[Corollary 2.10]{BreVol}. By the assumption, $\cD_k\cap\cO_k$ is Zariski dense in $\mtx{k}^n$ for infinitely many $k\in\N$. For such $k$ and $\uX\in \cD_k\cap\cO_k$, the characteristic polynomials of $f(\uX)$ and $g(\uX)$ each have pairwise distinct roots, and share a root. Let $\underline{\Omega}$ denote the $n$-tuple of $k\times k$ generic matrices with independent indeterminate entries, $\operatorname{UD}_k$ the universal division $\kk$-algebra generated by $\underline{\Omega}$ \cite[Section 3.2]{Row}, and $\cZ$ the center of $\operatorname{UD}_k$. Consider monic polynomials $\chi_f=\det(tI-f(\underline{\Omega}))$ and $\chi_g=\det(tI-g(\underline{\Omega}))$ in $\cZ[t]$. Since $\cO_k$ is dense in $\mtx{k}^n$, the polynomials $\chi_f$ and $\chi_g$ each have pairwise distinct roots in $\overline{\cZ}$, and are consequently minimal polynomials of $f(\underline{\Omega})$ and $g(\underline{\Omega})$. Thus, $\chi_f$ and $\chi_g$ are irreducible over $\cZ$ since $\cZ[t]/(\chi_f)$ and $\cZ[t]/(\chi_g)$ are isomorphic to the $\cZ$-subfields in $\operatorname{UD}_k$ generated by $f(\underline{\Omega})$ and $g(\underline{\Omega})$, respectively. Since $\cD_k$ is dense in $\mtx{k}^n$, the polynomials $\chi_f$ and $\chi_g$ share a root in $\overline{\cZ}$. By the irreducibility, $\chi_f=\chi_g$. Thus, the spectra of $f(\uX)$ and $g(\uX)$ coincide for all $\uX\in\mtx{k}^n$, for infinitely many $k\in\N$. Hence, $f$ and $g$ are isospectral.
\end{proof}

Theorem \ref{t:isospec} in particular shows that intertwinedness coincides with the transitive closure of elementary intertwinedness (for an analogous phenomenon in symbolic dynamics, see elementary and strong shift equivalence of integer matrices \cite[Section 7.2]{LindMarcus}). 
Characterization of isospectrality in terms of elementary intertwinedness allows one (at least in principle) to construct the isospectrality equivalency class of a noncommutative polynomial (see Example \ref{exa:long}).
Item (i) of Theorem \ref{t:isospec} is used later in Section \ref{sec:real} to investigate the pointwise norm equality.
For homogeneous polynomials, isospectrality implies elementary intertwinedness, as follows.

\begin{cor}
Let $f,g\in\px$ be homogeneous. Then $f,g$ are isospectral if and only if $f=ab$, $g=ba$ for some homogeneous $a,b\in\px$.
\end{cor}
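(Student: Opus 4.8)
The plan is to reduce to the non-homogeneous case already handled by Theorem \ref{t:isospec} and then exploit homogeneity to upgrade the conclusion. The backward direction is immediate: if $f=ab$ and $g=ba$, then $f,g$ are elementary intertwined (with $\lambda=0$), hence intertwined, hence isospectral by Theorem \ref{t:isospec}. For the forward direction, suppose $f,g\in\px$ are homogeneous and isospectral. If either is zero the statement is trivial, so assume $f,g\neq 0$; note they must have the same degree $m$ (compare, e.g., the top eigenvalue growth, or use that isospectrality forces equal spectral radii and hence — by the scaling relation $f(t\uX)=t^m f(\uX)$ valid for homogeneous $f$ of degree $m$ — equal degree). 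By Theorem \ref{t:isospec}, $f$ and $g$ are intertwined: there is a nonzero $a\in\px$ with $fa=ag$.

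The key step is to pass to the \emph{homogeneous} part of this intertwining relation. Write $a=a_r+a_{r-1}+\cdots$ as a sum of homogeneous components with $a_r\neq 0$ the lowest-degree one (or the highest — whichever is cleaner; I would take the lowest). Since $f$ and $g$ are homogeneous of the same degree $m$, comparing the lowest-degree homogeneous components on both sides of $fa=ag$ gives $f a_r = a_r g$ with $a_r$ nonzero homogeneous. So we may assume the intertwiner $a$ itself is homogeneous. Now run the induction from the proof of Theorem \ref{t:isospec} (the implication (iii)$\Rightarrow$(iv)), but keeping track of homogeneity at every stage: by Proposition \ref{p:notused}, every factor of a homogeneous polynomial is homogeneous (up to scalars), and the eigenring of a homogeneous polynomial, being spanned by homogeneous elements, contains the scalar $\lambda$ only when the relevant degree matches — this forces $\lambda=0$ whenever $\deg a>0$. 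Concretely: if $a$ is irreducible, then $E(a)=\kk$ by Theorem \ref{t:eigering}(b), so $f=\lambda+ab$ and $g=\lambda+ba$; but $ab$ and $ba$ are homogeneous of degree $m=\deg a+\deg b>0$, while $f,g$ are homogeneous of degree $m$, so comparing components forces $\lambda=0$, i.e.\ $f=ab$, $g=ba$ with $a,b$ homogeneous.

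For the inductive step when $a$ is reducible, one repeats the argument in Theorem \ref{t:isospec}: $g$ represents an element of the finite-dimensional eigenring $E(a)$, and one finds $\lambda$ so that $g-\lambda$ is a zero divisor; then $R(g-\lambda)+Ra=Ra_1$ with $a_1$ a proper left factor, yielding $g-\lambda=ba_1$, $a=\tilde a a_1$, and the elementary-intertwined polynomial $\tilde g=\lambda+a_1 b$, with $f\tilde a=\tilde a\tilde g$. The point is that all of $a,a_1,\tilde a,b$ are homogeneous (factors of homogeneous polynomials), so $a_1b$ is homogeneous of positive degree, forcing again $\lambda=0$; thus $\tilde g=a_1 b$ and $g=ba_1$ exhibit $g$ as $ba_1$ with $a_1b$ homogeneous, and the induction on the number of irreducible factors of $\tilde a$ (strictly fewer than $a$) produces homogeneous $a',b'$ with $f=a'b'$, $\tilde g=b'a'$. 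Chaining these, one gets a sequence of homogeneous $\lambda=0$ elementary intertwinings from $f$ to $g$; collapsing such a chain $f=p_0q_0$, $q_0p_0=p_1q_1$, \dots\ back into a single pair $f=ab$, $g=ba$ is then a routine bookkeeping step (for instance, at each stage $q_ip_i=p_{i+1}q_{i+1}$ yields $q_i=p_{i+1}u$, $q_{i+1}=uq_i'$-type factorizations via $Rq_i\cap Rp_{i+1}$, exactly as in Lemma \ref{l:power}).

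The main obstacle I anticipate is precisely this last collapsing step: showing that the transitive closure of \emph{homogeneous, $\lambda=0$} elementary intertwinedness is already \emph{single-step} homogeneous elementary intertwinedness. This is a homogeneous analogue of the slickness in Lemma \ref{l:power}, and the cleanest route is probably to observe directly that $f$ and $g$ homogeneous and intertwined forces them to be \emph{stably associated after a shift}, or better: from $fa=ag$ with $a$ homogeneous, factor $a$ minimally and use that $Rf a=R a g$ together with the module picture $R/Ra$ to read off a factorization of $g$ into $\deg a$-many pieces, one of which is $f$-associated — then invoke Proposition \ref{p:notused} to make everything honestly homogeneous and conclude $g=ba$ directly with $ab$ stably associated, hence (being homogeneous) scalar-equal, to $f$. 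I would present the argument in this more direct module-theoretic form to sidestep managing an explicit chain.
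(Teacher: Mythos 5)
Your reduction to a homogeneous intertwiner $a$ (by taking a nonzero homogeneous component of $a$ in $fa=ag$) is exactly the paper's starting point, and your observation that homogeneity forces $\lambda=0$ at each eigenring step is correct. However, the gap you flag yourself is real: your induction produces a \emph{chain} of $\lambda=0$ elementary intertwinings ($f=a'b'$, $\tilde g=b'a'=a_1b$, $g=ba_1$), and you never show how to collapse it to a single pair $f=cd$, $g=dc$. This step is fillable — from $b'a'=a_1b$ and the unique factorization of homogeneous polynomials, whichever of $b',a_1$ has the smaller degree is a left factor of the other, which then regroups the four pieces into two — but you do not carry it out, and the ``module-theoretic'' alternative you sketch at the end (``read off a factorization of $g$ into $\deg a$-many pieces'') is too vague to substitute for it.

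The paper's proof avoids the entire detour through the induction of Theorem \ref{t:isospec}(iii)$\Rightarrow$(iv). From $fa=ag$ with everything homogeneous, it compares the two complete factorizations of the single homogeneous polynomial $fa=ag$ via Proposition \ref{p:notused}: if $\deg a\le\deg f$, the irreducible factors of $a$ form a left prefix of those of $f$, giving $f=ab$, and then $g=ba$ by left-cancelling $a$ in $(ab)a=ag$; if $\deg a>\deg f$, then instead $a=fa'$, and substituting and left-cancelling $f$ gives $fa'=a'g$ with $\deg a'<\deg a$, so one iterates down to the first case. This is shorter and sidesteps the collapsing problem entirely; running the elementary-intertwinedness machinery is unnecessary here.
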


\begin{proof}
If $f,g$ are isospectral, then $fa=ag$ for some nonzero $a\in\px$ by Theorem \ref{t:isospec}. Since $f$ and $g$ are homogeneous, we may take $a$ homogeneous (more precisely, we can replace the initial $a$ with any of its nonzero homogeneous components). We distinguish two cases.
If $\deg a\le \deg f$, then the uniqueness of factorization for homogeneous polynomials as in Proposition \ref{p:notused} applied to $fa=ag$ shows that $f=ab$ for some $b\in\px$, and then $g=ba$. 
If $\deg a>\deg f$, then Proposition \ref{p:notused} implies $a=fa'$ for some $a'\in\px$, and consequently $fa'=a'g$. Continuing in this fashion, we eventually arrive at the first case.
\end{proof}

\begin{rem}\label{r:unique}
An intertwiner of $f$ and $g$ (if it exists) of minimal degree is unique up to scaling, and all other intertwiners are its left multiples by polynomials that commute with $f$. Indeed, let $M=\{a\in\px\colon fa=ag\}$, and let $\cC$ be the centralizer of $f$ in $\px$. By Bergman's centralizer theorem \cite{ber69}, the $\kk$-algebra $\cC$ is isomorphic to the univariate polynomial ring over $\kk$; furthermore, the centralizer $\widetilde{C}$ of $f$ in the universal skew field of fractions of $f$ equals the field of fractions of $\cC$. Let us view $M$ as a left $\cC$-module. Clearly, $M$ is torsion-free; since $\cC$ is a principal ideal domain, $M$ is a free $\cC$-module. If $0\neq a,b\in M$, then $fba^{-1}=bga^{-1}=ba^{-1}f$, and so $ba^{-1}\in\widetilde{\cC}$. Thus, the vector space $\widetilde{\cC}\otimes_{\cC} M$ is one-dimensional, and so $M$ is free of rank 1. Hence, $M=\cC\cdot a$ for an intertwiner $a$ of minimal degree.
\end{rem}

\begin{exa}\label{exa:long}
Let $p,q,r\in \kk[x]$ and $p,q\neq0$. Consider
$$f=pyq+r,\qquad g=qyp+r$$
in $\kk\!\Langle x,y\Rangle$. Note that $qfp=pgq$. If $X$ is such that $p(X),q(X)$ are invertible, then $$g(X,Y)=\left(p(X)q(X)^{-1}\right)^{-1}f(X,Y)\left(p(X)q(X)^{-1}\right).$$
In particular, $f(X,Y)$ and $g(X,Y)$ are similar for a generic pair $(X,Y)$, and thus $f$ and $g$ are isospectral by the continuity of eigenvalues. 
Thus $f$ and $g$ are intertwined by Theorem \ref{t:isospec}.
If $p\neq q$ and $\deg p,\deg q\ge1$, then $f$ and $g$ are not elementary intertwined. 

The reader may observe that for general $p,q,r$, finding a nonzero $a\in \kk\!\Langle x,y \Rangle$ such that $fa=ag$ or $ga=af$ is not completely straightforward. 
For a case study, fix a finite subset $S\subset\kk$. For every $A\subseteq S$ denote
$p_A=\prod_{\alpha\in A}(x-\alpha)\in\kk[x]$.
Then
$$\left\{p_Ayp_{S\setminus A}+x\colon A\subseteq S\right\}$$
is an equivalence class (of size $2^{|S|}$) for the transitive extension of the elementary intertwinedness, and thus for isospectrality. 
Indeed, observe that $p_Ayp_{S\setminus A}+x-\lambda$ for $\lambda\in\kk$ is either irreducible, or has two irreducible factors: $x-\lambda$ and a factor of degree $|S|$. This shows that $p_Ayp_{S\setminus A}+x$ is elementary intertwined only with $p_Byp_{S\setminus B}+x$ for $|S\setminus (A\cap B)|\le1$.
More generally, one needs $|S\setminus(A\cap B))|$ elementary intertwined pairs to pass from $p_Ayp_{S\setminus A}+x$ to $q_Byp_{S\setminus B}+x$.
In particular, to pass from $f=p_Sy+x$ to $g=yp_S+x$, one requires $|S|$ elementary intertwined pairs. Concretely, if $S=\{1,\dots,s\}$,
\begin{equation}\label{e:chain}
p_{\{1,\dots,s\}}y+x
\rightsquigarrow p_{\{2,\dots,s\}}yp_{\{1\}}+x
\rightsquigarrow p_{\{3,\dots,s\}}yp_{\{1,2\}}+x
\rightsquigarrow\cdots\rightsquigarrow
yp_{\{1,\dots,s\}}+x.
\end{equation}
Note that $\deg f=\deg g=|S|+1$.
The minimal $a\neq 0$ such that $fa=ag$ has degree $|S|$; for example, $a=p_S$.
On the other hand, as the proof of Theorem \ref{t:isospec}(iii)$\Rightarrow$(iv) indicates, the minimal $a\neq 0$ such that $ga=af$ has degree $|S|^2$. Indeed, retracing \eqref{e:chain} using the identity $(v(u-\lambda)+u)(v+1)=(v+1)((u-\lambda)v+u)$ for a scalar $\lambda$, one sees that
$$a=\big(yp_{\{1,\dots,s-1\}}+1\big)
\big(p_{\{s\}}yp_{\{1,\dots,s-2\}}+1\big)
\big(p_{\{s-1,s\}}yp_{\{1,\dots,s-3\}}+1\big)\cdots
\big(p_{\{2,\dots,s\}}y+1\big)
$$
satisfies $ga=af$.
\end{exa}

\subsection{Operator isospectrality}

Let $V$ be a vector space over $\kk$. The spectrum of a linear operator $Y:V\to V$ is the set $\sigma(Y)=\{\lambda\in\kk\colon Y-\lambda I \text{ is not invertible}\}$.
We say that $f,g\in\px$ are \emph{operator isospectral} if the spectra of $f(\uX)$ and $g(\uX)$ coincide, for all tuples of linear operators $V\to V$, and all vector spaces $V$.\footnote{
It may be natural to restrict to $\kk=\C$, and evaluations on tuples of bounded operators on a separable Banach/Hilbert space. However, the conclusions of this subsection would remain the same.
} 
Clearly, operator isospectrality implies isospectrality. However, the converse fails, as the following example shows.

\begin{exa} \label{ex:not_op_isosp}
Let $f = xy$ and $g = yx$. Then $f$ and $g$ are elementary intertwined, and thus isospectral. 
Let $S \in \cB(\ell^2(\N))$ be the unilateral shift. Then, $f(S,S^*)$ is a non-trivial projection and $g(S,S^*) = I$. Therefore, $f$ and $g$ are not operator isospectral.
\end{exa}

While operator isospectrality is stronger than isospectrality, it is still weaker than pointwise similarity. More precisely, it does not imply rank equivalence, as demonstrated in Example \eqref{ex:opspec} below.
First, we record a few straightforward observations regarding spectra of noncommutative polynomials on operators.

\begin{lem}\label{l:easy}
Let $f,g\in\px$, and let $\uX$ be a tuple of linear operators on a vector space over $\kk$.
\begin{enumerate}[(a)]
    \item If $f,g$ are stably associated, then $f(\uX)$ is invertible if and only if $g(\uX)$ is invertible.
    \item If $f,g$ are elementary intertwined as $f=\lambda+ab$ and $g=\lambda+ba$, then $\sigma (f(\uX))\setminus\{\lambda\} = \sigma (g(\uX))\setminus\{\lambda\}$.
    \item If $f,g$ are isospectral, there exists a finite set $S\subset\kk$, independent of $\uX$, such that $\sigma(f(\uX))\setminus S=\sigma(g(\uX))\setminus S$.
\end{enumerate}
\end{lem}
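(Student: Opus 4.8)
The plan is to prove the three parts in increasing order of difficulty, with (a) and (b) essentially immediate and (c) following by combining (b) with the structural Theorem \ref{t:isospec}. For part (a): if $f$ and $g$ are stably associated, then by the primary definition there are $P,Q\in\GL_2(\px)$ with $g\oplus 1 = P(f\oplus 1)Q$. Since $P,Q$ have entries in $\px$, their evaluations $P(\uX),Q(\uX)$ make sense as operators on $V\oplus V$, and they are invertible because $P^{-1},Q^{-1}$ also have polynomial entries, so $P(\uX)P^{-1}(\uX) = I$ and similarly for $Q$. Evaluating the identity at $\uX$ gives $g(\uX)\oplus I = P(\uX)(f(\uX)\oplus I)Q(\uX)$, so $g(\uX)\oplus I$ is invertible if and only if $f(\uX)\oplus I$ is, which is equivalent to invertibility of $g(\uX)$, resp. $f(\uX)$.

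For part (b): write $f=\lambda+ab$ and $g=\lambda+ba$. I need to show that for $\mu\neq\lambda$, the operator $f(\uX)-\mu = (\lambda-\mu)I + a(\uX)b(\uX)$ is invertible if and only if $g(\uX)-\mu = (\lambda-\mu)I + b(\uX)a(\uX)$ is. This is the standard fact that $I+ST$ is invertible iff $I+TS$ is invertible (for $S=a(\uX)$, $T=b(\uX)/(\lambda-\mu)$, up to the nonzero scalar $\lambda-\mu$): indeed if $(I+ST)$ has inverse $W$ then one checks directly that $I - TWS$ is a two-sided inverse of $I+TS$. This gives $\sigma(f(\uX))\setminus\{\lambda\} = \sigma(g(\uX))\setminus\{\lambda\}$. (One may note this is purely algebraic and valid on any vector space, no topology needed.)

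For part (c): if $f$ and $g$ are isospectral, then by Theorem \ref{t:isospec} there exist $f=f_1,\dots,f_\ell=g$ with each consecutive pair elementary intertwined, say $f_i=\lambda_i+a_ib_i$ and $f_{i+1}=\lambda_i+b_ia_i$. Set $S=\{\lambda_1,\dots,\lambda_{\ell-1}\}$, a finite subset of $\kk$ independent of $\uX$. By part (b), $\sigma(f_i(\uX))\setminus\{\lambda_i\} = \sigma(f_{i+1}(\uX))\setminus\{\lambda_i\}$ for each $i$, hence $\sigma(f_i(\uX))\setminus S = \sigma(f_{i+1}(\uX))\setminus S$ for each $i$; chaining these equalities yields $\sigma(f(\uX))\setminus S = \sigma(g(\uX))\setminus S$.

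The only real subtlety — and the place to be a little careful — is in part (a) and (b), making sure that evaluation of polynomial matrices at a tuple of (possibly unbounded, possibly just $\kk$-linear) operators is well-defined and respects products and inverses; but since $P,Q$ and their inverses lie in $\GL_2(\px)$ with genuine polynomial entries, evaluation is a ring homomorphism $\opm_2(\px)\to\opm_2(\End_\kk(V))$ on the relevant finitely generated subalgebra, so no difficulty arises. I expect no real obstacle here; the lemma is a routine bookkeeping consequence of the definitions together with Theorem \ref{t:isospec}.
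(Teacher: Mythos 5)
Your proposal is correct, and parts (a) and (c) follow exactly the paper's route (definition of stable association plus $Y$ invertible $\iff$ $Y\oplus I$ invertible for (a); chain of elementary intertwined pairs from Theorem~\ref{t:isospec} plus part (b) for (c)). The one place you diverge is part (b): you prove it directly via the purely algebraic fact that $I+ST$ is invertible iff $I+TS$ is, with the explicit inverse $I-TWS$. The paper instead observes that for $\beta\neq\lambda$ the relation $(f-\beta)a=a(g-\beta)$ is comaximal (since $(f-\beta)\cdot\frac{1}{\lambda-\beta}-a\cdot\frac{b}{\lambda-\beta}=1$, etc.), so $f-\beta$ and $g-\beta$ are stably associated by Theorem~\ref{t:st_assoc}, and then (b) reduces to (a). Your version is more self-contained and elementary; the paper's version is economical and reinforces the stable-association theme of the section. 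Both are correct and of essentially equal length, so the choice is stylistic.
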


\begin{proof}
(a) Follows directly from the definition of stable association, using that $Y$ is invertible if and only if $Y\oplus I$ is invertible.

(b) If $\beta\neq \lambda$, then $f-\beta$ and $g-\beta$ are stably associated, via the comaximal relation $(f-\beta)a=a(g-\beta)$. Then, (b) follows from (a).

(c) If $f,g$ are isospectral, one can pass from one to the other by a finite chain of elementary intertwined pairs by Theorem \ref{t:isospec}. Then, (c) follows from (b).
\end{proof}

\begin{exa}\label{ex:opspec}
Let $f = x y^2 x$ and $g = y x^2 y$. 
Let $X, Y$ be linear operators on a vector space over $\kk$.
By Lemma \ref{l:easy}(b), $\sigma(f(X,Y))\setminus \{0\}=\sigma(g(X,Y))\setminus \{0\}$. 
Thus, we only need to investigate the membership of $0$ in the spectra, which is only possible if at least one of $X,Y$ is singular (non-invertible).
If $X$ is not injective, then $f(X,Y)$ is not injective; if $X$ is not surjective, then $f(X,Y)$ is not surjective.
We conclude that if $X$ is singular, then $0 \in \sigma(f(X,Y))$.
Similarly, if $Y$ is singular, then $0 \in \sigma(g(X,Y))$. Therefore, if both $X$ and $Y$ are singular, then the spectra of $f(X,Y)$ and $g(X,Y)$ coincide. We are left with the option that $X$ is singular and $Y$ is not, or vice versa. If $X$ is singular, then so is $f(X,Y)$. In order for the spectra to differ, $g(X,Y)$ must be invertible. However, if both $g(X,Y)$ and $Y$ are invertible, then so is $X^2$, which is impossible. Thus, the spectra of $f(X,Y)$ and $g(X,Y)$ always coincide.

To see that $f$ and $g$ are not rank-equivalent, let
\[
X = \begin{pmatrix} 0 & 1 \\ 0 & 0 \end{pmatrix} 
\quad\text{and}\quad
Y = \frac{1}{2} \begin{pmatrix} 1 & 1 \\ 1 & 1 \end{pmatrix}.
\]
On the one hand, $X^2 = 0$ implies $g(X,Y) = 0$. On the other hand, $Y$ is a projection, and $f(X,Y) = XYX = \frac{1}{2} X \neq 0$.
\end{exa}

The above reasoning extends to the following family of non-trivial operator isospectral pairs: 
if $a_1,a_2\in\px$ are stably associated and $b_1,b_2\in\px$ are stably associated, then
$a_1b_1b_2a_2$ and $b_2a_2a_1b_1$ are operator isospectral polynomials. 
Similarly, if $a,b\in\px$ are such that $ab$ and $ba$ are stably associated, then $ab$ and $ba$ are operator isospectral (see Example \ref{exa:unexpected} below for an instance of noncommuting $a,b$ with this feature). We leave the quest for characterization of operator isospectrality and related problems for future work.

\subsection{Isospectrality and composition}

The following observations about isospectrality and composition are needed later in Section \ref{sec:sim} below.

\begin{lem}\label{l:isospecfactors}
Let $f,g\in\px$ be isospectral, and $p\in\kk[t]$. Then every irreducible factor of $p(f)$ is stably associated to a factor of $p(g)$.
\end{lem}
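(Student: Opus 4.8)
The plan is to reduce to the case where $p$ is linear and then exploit the structure we already have. First I would observe that over the algebraically closed field $\kk$ we can factor $p(t)=\gamma\prod_{j}(t-\lambda_j)^{m_j}$, so that $p(f)=\gamma\prod_j(f-\lambda_j)^{m_j}$ and $p(g)=\gamma\prod_j(g-\lambda_j)^{m_j}$. By the uniqueness of complete factorizations in $\px$ up to permutation and stable association (\cite[Proposition 3.2.9]{cohn}), it suffices to show that each irreducible factor of a single block $(f-\lambda_j)^{m_j}$ is stably associated to an irreducible factor of $\prod_k (g-\lambda_k)^{m_k}$. Since $f$ and $g$ are isospectral, $f-\lambda$ and $g-\lambda$ are isospectral for every $\lambda\in\kk$, hence intertwined by Theorem \ref{t:isospec}; moreover, any two powers $(f-\lambda)^m$ and $(g-\lambda)^m$ are intertwined (if $(f-\lambda)a=a(g-\lambda)$ then $(f-\lambda)^m a = a(g-\lambda)^m$), hence isospectral, hence (as they are non-constant when $f$ is non-constant) their irreducible factors pair up via stable association by Theorem \ref{t:rankequiv} applied to\dots no — by the characterization of isospectrality. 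Actually the cleanest route: $(f-\lambda)^m$ and $(g-\lambda)^m$ intertwined means they have a coprime relation extracted from it, but more simply I will use that an intertwining relation $(f-\lambda)^m a = a (g-\lambda)^m$ together with Theorem \ref{t:st_assoc}-style bookkeeping forces the irreducible factors of $(f-\lambda)^m$ to be stably associated, with multiplicity, to those of $(g-\lambda)^m$.

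The key technical step I would carry out is the following: if $fa=ag$ is an intertwining relation in $R$ with $a\neq 0$, then every irreducible factor of $f$ is stably associated to an irreducible factor of $g$. To see this, write a complete factorization $f=f_1\cdots f_r$ and consider the submodule chain $R/Rf \supset Rf_1/Rf \supset Rf_1f_2/Rf\supset\cdots\supset 0$ whose successive quotients are $R/Rf_i$. Intertwinedness gives an isomorphism-of-modules argument: by Theorem \ref{t:st_assoc}, $fa=ag$ with $a$ chosen of minimal degree yields (after passing to a coprime relation, using Lemma \ref{l:comax_bds} and \cite[Corollary 3.1.4]{cohn}) that $f$ and $g$ are \emph{similar} as factors in the sense that their left modules admit a common refinement; concretely, $R/Rf$ and $R/Rg$ have the same composition factors (Jordan–Hölder in the abelian category $\mathscr T$ of Lemma \ref{l:infrank}), and composition factors of $R/Rh$ are exactly $R/R\ell$ for the irreducible factors $\ell$ of $h$, up to stable association. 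This is where I expect to lean hardest on Cohn's machinery; the Jordan–Hölder theorem in $\mathscr T$ plus \cite[Proposition 3.2.9]{cohn} does the job.

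So the main obstacle is making precise the claim ``intertwined polynomials have the same irreducible factors up to stable association (with multiplicity).'' The subtlety is that intertwinedness is a priori weaker than stable association — e.g. $xy$ and $yx$ are intertwined but not stably associated — so one only gets equality of the \emph{multisets} of irreducible factors, not of the factorizations as ordered products. Fortunately that is exactly what the lemma asks for. I would isolate this as the core claim, prove it via the composition-series argument above (reducing first to $a$ irreducible, where $E(a)=\kk$ by Theorem \ref{t:eigering}(b) forces $f=\lambda+ab$, $g=\lambda+ba$, and then $f,g$ visibly share factors, then induct on the number of irreducible factors of $a$ exactly as in the proof of Theorem \ref{t:isospec}(iii)$\Rightarrow$(iv)), and then apply it to $(f-\lambda_j)^{m_j}$ and $(g-\lambda_j)^{m_j}$ for each $j$. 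Assembling these across all roots $\lambda_j$ of $p$ and invoking uniqueness of complete factorization for $p(g)=\gamma\prod_k(g-\lambda_k)^{m_k}$ yields the statement. One edge case to dispatch at the start: if $f$ (equivalently $g$, or $p(f)$) is constant the statement is vacuous or trivial, so assume $p(f)\notin\kk$.
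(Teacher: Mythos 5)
Your proposal is correct in its conclusions, but it takes a genuinely different route from the paper. The paper's proof is three lines: since $f$ and $g$ are isospectral, so are $p(f)$ and $p(g)$; isospectral polynomials have the same free locus $\cZ(\cdot)$; and then \cite[Theorem~2.12]{HKV} immediately pairs up the irreducible factors of $p(f)$ with factors of $p(g)$ via stable association. Your plan instead stays entirely inside Cohn's factorization theory and the machinery already built in Section~4 (elementary intertwinedness, eigenrings). Both work, but the paper's route is much shorter because the heavy lifting is outsourced to the determinantal-locus theorem of \cite{HKV}, while yours reconstructs the relevant ``multisets of irreducible factors agree'' fact from the chain of elementary intertwinings in Theorem~\ref{t:isospec}(iii)$\Rightarrow$(iv). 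Your route has the virtue of being algebraically self-contained, which is a reasonable trade-off.

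Two technical remarks. First, the intermediate reduction you perform --- factoring $p$ over $\kk$ as $\gamma\prod_j(t-\lambda_j)^{m_j}$ and handling each block $(f-\lambda_j)^{m_j}$ vs.\ $(g-\lambda_j)^{m_j}$ separately --- is unnecessary overhead: $p(f)$ and $p(g)$ are already isospectral (the spectrum of $p(A)$ is $p$ applied to the spectrum of $A$), hence intertwined by Theorem~\ref{t:isospec}, so you can apply your core claim to $p(f)$ and $p(g)$ directly. Second, your first proposed justification of the core claim is shaky as written: from $fa=ag$ with $a$ of minimal degree you \emph{cannot} in general ``pass to a coprime relation'' between $f$ and $g$ themselves --- if you could, they would be stably associated, and your own example $f=xy$, $g=yx$ (with minimal intertwiner $a=x$, a common left factor) shows this fails. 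So the assertion that $R/Rf\cong R/Rg$ as modules, or that one gets a common refinement via Theorem~\ref{t:st_assoc}, does not follow that way. However, the fallback you give --- induction on the number of irreducible factors of $a$, exactly mirroring the proof of Theorem~\ref{t:isospec}(iii)$\Rightarrow$(iv), using that each elementary intertwining $\mu+ab \leftrightarrow \mu+ba$ preserves the multiset of irreducible factors (for $\mu\neq0$ the two are stably associated; for $\mu=0$ the factorizations are just cyclic rotations of each other), combined with uniqueness of complete factorization \cite[Proposition~3.2.9]{cohn} --- is sound. If you retain this lemma-within-a-lemma approach, you should lead with the induction argument and drop the coprime-relation framing.
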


\begin{proof}
Given $h\in\px$, its \emph{free locus} $\cZ(h)$ \cite{HKV} is the set of all matrix tuples $\uX$ such that $\det h(\uX)=0$. Since $f$ and $g$ are isospectral, so are $p(f)$ and $p(g)$, and in particular $\cZ(p(f))=\cZ(p(g))$. By \cite[Theorem 2.12]{HKV} it then follows that every irreducible factor of $p(f)$ is stably associated to a factor of $p(g)$.
\end{proof}

\begin{lem}\label{l:twoeig}
Let $f\in\px$, and let $\alpha,\beta\in\kk$ be distinct. Then no factor of $f-\alpha$ is stably associated to a factor of $f-\beta$.
\end{lem}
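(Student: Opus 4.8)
The plan is to argue by contradiction: suppose $h$ is a factor of $f-\alpha$ and $h'$ a factor of $f-\beta$ with $h$ stably associated to $h'$. Such factors are nonconstant (the constant case is vacuous, and if $f\in\kk$ then $f-\alpha,f-\beta$ are nonzero constants with no nonconstant factors), so $f\notin\kk$. Two reductions turn the ring-theoretic hypothesis into a purely geometric one. First, if $f-\alpha=uhv$ then $\det(f(\uX)-\alpha I)=\det u(\uX)\,\det h(\uX)\,\det v(\uX)$, so $\cZ(h)\subseteq\cZ(f-\alpha)$; likewise $\cZ(h')\subseteq\cZ(f-\beta)$. Second, stably associated polynomials are rank-equivalent, hence $\cZ(h)=\cZ(h')$. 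Combining, $\cZ(h)\subseteq\cZ(f-\alpha)\cap\cZ(f-\beta)=\{\uX:\{\alpha,\beta\}\subseteq\sigma(f(\uX))\}$. Thus it suffices to show that for no nonconstant $h\in\px$ is $\cZ(h)$ contained in $\cZ(f-\alpha)\cap\cZ(f-\beta)$.

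I would then pass to a fixed large dimension $k$ and the tuple $\underline\Omega$ of generic $k\times k$ matrices. On one side, $\det h(\underline\Omega)$ is a nonzero polynomial in the entries (it is nonzero at scalar tuples at which $h$ does not vanish), and it does have a zero for every large $k$: taking a nonzero finite-dimensional quotient $V$ of the left module $R/Rh$ and letting $\uX$ be the tuple of left multiplications by the $x_i$ on $V$, the image of $1$ is a nonzero vector killed by $h(\uX)$, so $h(\uX)$ is singular, and one pads with scalar blocks to reach any larger size. Hence $\cZ(h)\cap\mtx{k}^n=V(\det h(\underline\Omega))$ is a nonempty hypersurface, in particular pure of codimension $1$. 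On the other side, $\cZ(f-\alpha)\cap\cZ(f-\beta)\cap\mtx{k}^n=V(g_\alpha)\cap V(g_\beta)$, where $g_\gamma:=\det(f(\underline\Omega)-\gamma I)$. A nonempty hypersurface cannot lie inside $V(g_\alpha)\cap V(g_\beta)$ unless $g_\alpha$ and $g_\beta$ share an irreducible factor; so the whole statement reduces to showing that $g_\alpha$ and $g_\beta$ are coprime in $\kk[\text{entries of }\underline\Omega]$ for all large $k$.

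For this last point I would reuse the universal division algebra argument from the proof of Theorem \ref{t:isospec}(i)$\Rightarrow$(ii): since $f\notin\kk$, for all large $k$ the generic characteristic polynomial $\chi_k(t)=\det(tI-f(\underline\Omega))=t^k-s_1t^{k-1}+\cdots+(-1)^ks_k$ (with $s_j$ the symmetric functions of the eigenvalues of $f(\underline\Omega)$, polynomials in the entries) is separable and irreducible of degree $k$ over the center $\cZ_k$. One easy ingredient is that the affine-linear forms $p\mapsto p(\alpha)$ and $p\mapsto p(\beta)$ on monic degree-$k$ polynomials are non-parallel when $\alpha\neq\beta$: their linear parts $((-1)^j\alpha^{k-j})_j$ and $((-1)^j\beta^{k-j})_j$ are proportional only if $\alpha=\beta$, as one sees by comparing the $j=k$ entries; so the hyperplanes $\{p(\alpha)=0\}$ and $\{p(\beta)=0\}$ in $\kk^k$ meet in exactly codimension $2$. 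The main obstacle is to convert this into coprimality of $g_\alpha=\pm\chi_k(\alpha)$ and $g_\beta=\pm\chi_k(\beta)$: a common irreducible factor $\pi$ would, upon localizing and reducing modulo $\pi$, yield a specialization of the irreducible polynomial $\chi_k$ having the two distinct rational roots $\alpha,\beta$; ruling this out amounts to showing that the hypersurfaces $\cZ(f-\alpha)\cap\mtx{k}^n$ and $\cZ(f-\beta)\cap\mtx{k}^n$ share no irreducible component, and the delicate step is controlling the fibres of the characteristic-polynomial map $\Phi_k\colon\uX\mapsto(s_1(f(\uX)),\dots,s_k(f(\uX)))$ over the codimension-$2$ locus $\{p(\alpha)=0=p(\beta)\}$ — a shared component $W$ would have $\overline{\Phi_k(W)}$ inside that locus, which (using that for large $k$ the map $\Phi_k$ is étale, hence flat, onto a dense open of its image once one also records an eigenvector, together with a dimension count) is incompatible with $W$ being a hypersurface. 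The restriction to large $k$ is essential: at small $k$ there are genuine accidental coincidences — e.g. the symmetry of the spectrum of a traceless $2\times2$ matrix makes $\cZ(xy-yx-\alpha)\cap\mtx{2}^2=\cZ(xy-yx+\alpha)\cap\mtx{2}^2$ — which the irreducibility of $\chi_k$ for large $k$ precisely excludes.
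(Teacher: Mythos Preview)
Your reduction is sound: if a nonconstant $h$ divides $f-\alpha$ and a stably associated $h'$ divides $f-\beta$, then for every $k$ the hypersurface $V\!\big(\det h(\underline\Omega)\big)\subset\mtx{k}^n$ lies in $V(g_\alpha)\cap V(g_\beta)$, so each irreducible factor of $\det h(\underline\Omega)$ divides both $g_\alpha$ and $g_\beta$. Thus it is enough to find one large $k$ for which $g_\alpha$ and $g_\beta$ are coprime in $\kk[\text{entries}]$. The paper does not go this route at all; it simply writes $f-\alpha=ahb$, $f-\beta=a'h'b'$ and observes that
\[
ahb - a'h'b' \;=\; \beta-\alpha \in \kk^\times
\]
is forbidden in a ring with the strong distributive factor lattice property, citing \cite[Theorem 4.2.3(f) and Corollary 4.3.4]{cohn}. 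That is the whole proof.

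The gap in your proposal is precisely where you flag it. Your dimension argument for coprimality needs two ingredients that you do not establish. First, you need the codimension-$2$ locus $\{p(\alpha)=p(\beta)=0\}\subset\kk^k$ to still have codimension $2$ inside $\overline{\operatorname{im}\Phi_k}$; but $\Phi_k$ is not dominant in general (your own example $f=xy-yx$ gives $s_1\equiv 0$, and similar trace constraints can push the image into higher-codimension subvarieties), so intersecting two hyperplanes with the image can drop codimension. Second, even if one restricts to a dense open where some augmented map is flat, a putative shared component $W$ of $V(g_\alpha)$ and $V(g_\beta)$ is under no obligation to meet that open, so flatness there does not control $\overline{\Phi_k(W)}$. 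The irreducibility of $\chi_k$ you invoke is over the center $\cZ_k$ of the generic division algebra, not over $\kk(\text{entries})$, so it does not directly prevent $\chi_k$ from acquiring two rational roots after reducing modulo an irreducible $\pi\in\kk[\text{entries}]$.

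There is also a structural warning: by the free-locus theory \cite[Theorem 2.12]{HKV}, the irreducible factors of $g_\gamma=\det(f(\underline\Omega)-\gamma I)$ (for $k$ large) are in bijection with the stable-association classes of irreducible factors of $f-\gamma$. Hence ``$g_\alpha$ and $g_\beta$ are coprime for large $k$'' is literally equivalent to the lemma you are proving. Your geometric reduction therefore lands you on an equivalent statement, and an independent argument is required; the \'etaleness/dimension sketch does not supply one. Either close that gap with a genuine fiber-dimension bound (uniform in $\alpha,\beta$), or switch to the one-line lattice argument from Cohn.
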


\begin{proof}
Suppose $f-\alpha=ahb$ and $f-\beta=a'h'b'$, where $h$ and $h'$ are stably associated. Then
$$ahb-a'h'b'=\beta-\alpha.$$
But such a relation cannot hold in $\px$ by \cite[Theorem 4.2.3(f)]{cohn} because $\px$ possesses the strong distributive factor lattice property \cite[Corollary 4.3.4]{cohn}.
\end{proof}

We say that $f\in\px$ is \emph{composite} if $f=p(\tilde f)$ for some $\tilde f\in\px$ and a nonlinear $p\in\kk[t]$, and \emph{non-composite} otherwise.
Observe that detecting composition is straightforward by Bergman's centralizer theorem \cite{ber69}: namely, $f$ is composite if and only if the linear space $\{h\in\px\colon fh=hf\ \&\ \deg h<\deg f\}$ is nonzero.
The following assertion appears implicitly in \cite{vol20}. 

\begin{lem} \label{lem:isospec_non_comp}
If $f, g \in \px$ are isospectral, there exist $p\in\kk[t]$ and non-composite isospectral $\tilde f,\tilde g\in\px$ such that $f=p(\tilde f)$ and $g=p(\tilde g)$.
\end{lem}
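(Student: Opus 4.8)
The plan is to prove Lemma \ref{lem:isospec_non_comp} by writing $f$ and $g$ as compositions with the ``deepest possible'' inner polynomial and then showing these deepest descriptions match. First I would observe that by Bergman's centralizer theorem \cite{ber69}, the centralizer $\cC_f$ of $f$ in $\px$ is $\kk[\tilde f]$ for some \emph{non-composite} $\tilde f\in\px$: indeed $\cC_f$ is a $\kk$-subalgebra of a univariate polynomial ring, hence of the form $\kk[h]$, and choosing $h$ of minimal degree makes it non-composite. Every $h\in\px$ commuting with $f$ lies in $\kk[\tilde f]$, and in particular $f\in\kk[\tilde f]$, so $f=p(\tilde f)$ for some $p\in\kk[t]$; similarly $g=q(\tilde g)$ for non-composite $\tilde g$ and $q\in\kk[t]$. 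It remains to show we may take $p=q$ and $\tilde f,\tilde g$ isospectral.

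The key step is to relate the outer polynomials $p,q$ via the local (eigenvalue) data. Since $f$ and $g$ are isospectral, for every tuple $\uX$ the spectrum of $p(\tilde f(\uX))$ equals the spectrum of $q(\tilde g(\uX))$, i.e.\ $p(\sigma(\tilde f(\uX)))=q(\sigma(\tilde g(\uX)))$ as sets. On large matrix tuples, $\tilde f(\uX)$ attains many distinct eigenvalues (by \cite[Corollary 2.10]{BreVol} as used in the proof of Theorem \ref{t:isospec}), and moreover the eigenvalues of $\tilde f(\underline{\Omega})$ for the generic tuple are the roots of the irreducible characteristic polynomial $\chi_{\tilde f}\in\cZ[t]$; the same holds for $\tilde g$. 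The identity $p(\sigma(\tilde f(\uX)))=q(\sigma(\tilde g(\uX)))$ then forces, after passing to generic matrices, a relation $p\circ\tilde f$ and $q\circ\tilde g$ have the same characteristic polynomial over $\cZ$, hence are isospectral; one then argues $\deg p=\deg q$ (compare degrees of $f,g$ and of $\tilde f,\tilde g$, which are determined by the non-composite choice), and that $p$ and $q$ agree up to the ambiguity of replacing $\tilde f$ by $\lambda\tilde f+\mu$. Absorbing an affine change of variable $t\mapsto \lambda t+\mu$ into $\tilde f$ (which keeps it non-composite) and into $p$, we may assume $p=q$. Finally, isospectrality of $f=p(\tilde f)$ and $g=p(\tilde g)$ together with injectivity of $p$ on the generic spectra yields isospectrality of $\tilde f$ and $\tilde g$: the generic eigenvalues of $\tilde f(\uX)$ and $\tilde g(\uX)$ are roots of $p(t)-\lambda$ for common $\lambda$, and using that $\chi_{\tilde f},\chi_{\tilde g}$ are irreducible over $\cZ$ they must coincide.

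I would structure the write-up as: (1) set up $\tilde f,\tilde g$ non-composite with $f=p(\tilde f)$, $g=q(\tilde g)$ via Bergman; (2) normalize by an affine substitution so that $p,q$ are both, say, monic with zero subleading behaviour matched, and show $\deg p=\deg q$ using that $\deg f=\deg p\cdot\deg\tilde f$ and similarly for $g$, combined with the fact (from isospectrality, e.g.\ via Theorem \ref{t:isospec} and Lemma \ref{l:comax_bds}(a) or directly) that $\deg f=\deg g$ and that the non-composite inner polynomials have matching degree; (3) conclude $p=q$ and then derive isospectrality of $\tilde f,\tilde g$ from the generic-matrix / universal-division-algebra argument already deployed in the proof of Theorem \ref{t:isospec}.

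The main obstacle I expect is step (2)–(3): controlling the affine ambiguity and proving $\deg p=\deg q$ cleanly. A priori one could imagine $f=p(\tilde f)$ with $p$ of large degree but $\tilde f$ of small degree, versus $g=q(\tilde g)$ with the roles lopsided the other way; ruling this out requires knowing that the \emph{non-composite depth} is an isospectrality invariant, which is essentially the heart of the lemma. I would handle this by working with $\chi_{\tilde f}, \chi_{\tilde g}$ over the center $\cZ$ of the universal division algebra $\operatorname{UD}_k$: isospectrality of $f,g$ gives $p(\chi_{\tilde f})$-data $=q(\chi_{\tilde g})$-data, and irreducibility plus a degree count over $\cZ$ pins down $\deg p=\deg q$ and then $p=q$ up to the allowed affine change. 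Everything else is bookkeeping that parallels arguments already present in Section \ref{sec:eig}.
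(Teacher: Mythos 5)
Your opening move — using Bergman's centralizer theorem to write $f=p(\tilde f)$ and $g=q(\tilde g)$ with $\tilde f,\tilde g$ non-composite — is correct and matches the paper's starting point, and your overall goal (normalize to $p=q$, then show $\tilde f,\tilde g$ isospectral) is the right one. The gap is in the very step you flag as the ``heart of the lemma,'' and the fix you sketch does not close it. Knowing $\deg f=\deg g$ only gives $\deg p\cdot\deg\tilde f=\deg q\cdot\deg\tilde g$, which by itself permits a lopsided situation such as $(\deg p,\deg\tilde f)=(2,3)$ versus $(\deg q,\deg\tilde g)=(3,2)$. Your proposed remedy — compare $\chi_{\tilde f}$ and $\chi_{\tilde g}$ over the center $\cZ$ of $\operatorname{UD}_k$ and invoke ``irreducibility plus a degree count'' — does not see $\deg p$: for generic $k$, the polynomials $\chi_{\tilde f}$, $\chi_{\tilde g}$, $\chi_f=\prod_i(t-p(\mu_i))$ and $\chi_g=\prod_j(t-q(\nu_j))$ are \emph{all} irreducible of degree $k$ over $\cZ$, so the degree count over $\cZ$ collapses to $k=k$ and carries no information about $\deg p$ versus $\deg q$. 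You would need a genuinely different invariant extracted from the spectral data, and the proposal does not supply one.

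The paper closes this exact gap by a factorization-theoretic argument that your sketch bypasses. By \cite[Theorem 3.2]{vol20} there is a cofinite $A\subseteq\kk$ on which $\tilde f-\alpha$ and $\tilde g-\alpha$ are irreducible, and for generic $\lambda$ one can arrange $p-\lambda=\prod_{j=1}^{\deg p}(t-\beta_j)$ and $q-\lambda=\prod_{j=1}^{\deg q}(t-\gamma_j)$ with all roots in $A$ and pairwise distinct. Then $f-\lambda=\prod_j(\tilde f-\beta_j)$ is a complete factorization with exactly $\deg p$ irreducible factors, similarly for $g-\lambda$. Lemma \ref{l:isospecfactors} matches these factors up to stable association, while Lemma \ref{l:twoeig} shows the factors $\tilde f-\beta_j$ are pairwise non-stably-associated; counting gives $\deg p=\deg q$, and matching roots forces $\gamma_j=\beta_j+g(0)-f(0)$, so $p=q$ after an affine change and $\tilde f-\beta$ is stably associated to $\tilde g-\beta$ for all but finitely many $\beta$, whence $\tilde f,\tilde g$ are isospectral by continuity of eigenvalues. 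Your proposal never brings Lemmas \ref{l:twoeig} and \ref{l:isospecfactors} to bear at this decisive point, and the $\operatorname{UD}_k$ route you outline in their place does not substitute for them.
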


\begin{proof}
Write $f=p(\tilde f)$ and $g=q(\tilde g)$ for monic $p,q\in\kk[t]$ and non-composite $\tilde{f},\tilde{g}\in R$. By \cite[Theorem 3.2]{vol20}, there is $A\subseteq\kk$ such that $\kk\setminus A$ is finite and $\tilde f-\alpha,\tilde g-\alpha$ is irreducible for all $\alpha\in A$. 
Since $\kk\setminus A$ is finite, there exists $\Lambda\subset\kk$ such that $\kk\setminus\Lambda$ is finite and $p-\lambda,q-\lambda$ both have pairwise distinct roots in $\kk\setminus A$, for all $\lambda\in\Lambda$. Let
$$p-\lambda=\prod_{j=1}^r(t-\beta_j),\quad q-\lambda=\prod_{j=1}^s(t-\gamma_j).$$
Since $\tilde f-\beta_j,\tilde g-\gamma_j$ are irreducible, Lemmas \ref{l:twoeig} and \ref{l:isospecfactors} imply that $r=s$, and there is a permutation $\pi$ of $\{1,\dots,r\}$ such that $f-\beta_j$ is stably associated to $g-\gamma_{\pi(j)}$ for $j=1,\dots,n$. In particular, after reordering we have $\gamma_j=\beta_j+g(0)-f(0)$ for all $j=1,\dots,n$. Hence, $p=q$ after an affine transformation, and $\tilde f-\beta_j,\tilde g-\beta_j$ are stably associated for $j=1,\dots,n$. Furthermore, as $\lambda\in\Lambda$ varies, the roots $\beta_j$ attain all but finitely many values in $\kk$. Thus, $\tilde f-\beta,\tilde g-\beta$ are stably associated (in particular, rank-equivalent) for all but finitely many $\beta\in\kk$. Hence, $\tilde f$ and $\tilde g$ are isospectral by the continuity of eigenvalues.
\end{proof}

\begin{rem}
Lemma \ref{lem:isospec_non_comp} shows that deciding isospectrality reduces to non-composite polynomials. By the proof of \cite[Corollary 4.4]{vol20}, non-composite $f,g\in\px$ are isospectral if and only if $(f-t)a=b(g-t)$ for some nonzero $a,b\in \kk(t)\otimes\px$ of degree less than $\deg f$, which turns deciding isospectrality into a linear problem over the rational field $\kk(t)$.
\end{rem}

\section{Pointwise similarity}\label{sec:sim}

In this section, we show that two polynomials are pointwise similar if and only if they are equal (Theorem \ref{t:similar}).
Before embarking on the proof, let us briefly comment on the perceived complexity of this problem.\footnote{
The authors admit that they expected a quicker resolution.
} 
On the one hand, pointwise similarity is a much more restrictive local equivalence than isospectrality and rank equivalence, indicating a likelihood for a strong algebraic characterization. On the other hand, pointwise similarity lacks some desired features of isospectrality (continuity) and rank equivalence (clear interpretation of counting homomorphisms between free algebra modules).
The first main ingredient of the resolution of this problem comes from the Jordan canonical form: $f$ and $g$ are pointwise similar if and only if $(f-\lambda)^k$ and $(g-\lambda)^k$ are stably associated, for all $\lambda\in\kk$ and $k\in\N$. However, as stable association of powers turns out to be inconvenient to work with, the intertwining emerging from isospectrality is the second main ingredient.

\begin{prop}\label{p:irr_similar}
Let $f,g\in\px$. Assume that one of the following holds:
\begin{enumerate}[(a)]
    \item $f-\lambda$ is irreducible and stably associated to $g-\lambda$ for every $\lambda\in\kk$;
    \item there is $a\in\px$ such that $(f-\lambda)a=a(g-\lambda)$ is coprime for every $\lambda$.
\end{enumerate}
Then $f=g$.
\end{prop}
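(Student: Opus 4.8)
The plan is to treat the two cases largely in parallel, since (a) is essentially the special case of (b) with $a$ irreducible (then the relation $(f-\lambda)a=a(g-\lambda)$ is automatically coprime, and stable association to an irreducible factor forces $g-\lambda$ irreducible). So I would focus on (b): we are given a single nonzero $a\in\px$ with $(f-\lambda)a=a(g-\lambda)$ a coprime relation for \emph{every} $\lambda\in\kk$. Note that $a$ is an intertwiner of $f$ and $g$, so $f,g$ are isospectral, and in particular $f,g$ have the same degree and the same constant term; after subtracting a constant we may assume $f(0)=g(0)=0$. The first step is to extract structural consequences of coprimeness holding for all $\lambda$: by Lemma \ref{l:comax_bds}(a)-type reasoning (or directly), $\deg a<\deg f=\deg g=:m$, and coprimeness of $(f-\lambda)a=a(g-\lambda)$ says $f-\lambda,a$ are left coprime and $a,g-\lambda$ are right coprime for all $\lambda$. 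I would then pass to the ring $\kk[t]\otimes\px$ (or $\kk(t)\otimes\px$) and view $(f-t)a=a(g-t)$ as a single coprime relation over $\kk(t)$; the point is that $a$ does not depend on $\lambda$, so this is legitimate, and over the rational function field the relation stays coprime.

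The heart of the argument should be a degree/leading-term analysis. Write $m=\deg f$. Compare highest-degree homogeneous components: if $f_m,g_m,a_r$ denote the top components (with $r=\deg a<m$), the relation $(f-\lambda)a=a(g-\lambda)$ gives $f_m a_r=a_r g_m$ at top degree $m+r$, so $f_m$ and $g_m$ are homogeneous and stably associated, hence $g_m=\mu f_m$ for a scalar $\mu$ by Proposition \ref{p:notused}; feeding this back into the homogeneous relation and using that $\px$ is a domain forces $\mu=1$, i.e. $f$ and $g$ have the same leading component. Now I would run a descending induction on the components: suppose $f$ and $g$ agree in all homogeneous parts of degree $>d$; looking at the degree-$(d+r)$ part of $(f-t)a=a(g-t)$ (where the $-t$ only contributes in the lowest degrees, irrelevant here since $r<m$) yields $f_m a'+\cdots+f_{d+1}a_{r-?}+f_d a_r = a_r g_d + (\text{known terms})$, which by the induction hypothesis collapses to $f_d a_r - a_r g_d \in (\text{span of things already shown equal})$, i.e. $(f_d-g_d)a_r$ lies in a controlled submodule; the goal is to conclude $f_d=g_d$. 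The cleanest way to close this is probably to observe that the full intertwining relation $(f-t)a=a(g-t)$ with $\deg a<\deg f$, together with the leading components agreeing, forces $f-g$ to satisfy $(f-g)a = a\,0 + (\text{lower order})$ recursively, and since $a\neq0$ and $\px$ has no zero divisors and the degree of $(f-g)a$ would have to drop below what the relation permits, we get $f-g=0$. Alternatively — and this may be the more robust route — use Remark \ref{r:unique}: $a$ is (a left multiple of) the minimal intertwiner, $\deg a<\deg f$, and then a direct manipulation of $fa=ag$ viewed in $\kk(t)$ after replacing $f,g$ by $f-t,g-t$ shows $a^{-1}fa=g$ inside the free skew field, whence $f$ and $g$ are conjugate by $a$ in the skew field; combined with coprimeness (so that $a$ and $f-t$ share no common factor) and the fact that this holds over all of $\kk[t]$, a specialization argument at generic $\lambda$ plus the polynomial identity $fa=ag$ in $\px$ (not just the skew field) pins down $f=g$.

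I expect the main obstacle to be upgrading ``$f$ and $g$ are conjugate inside the free skew field of fractions, for all $\lambda$ simultaneously'' to the genuine polynomial equality $f=g$ in $\px$ — i.e., ruling out that the conjugator $a$ is genuinely noncommutative-nontrivial. The coprimeness hypothesis holding for \emph{every} $\lambda$ (not just generically) is exactly what should kill this: if $f\neq g$ then $f-g$ is a nonzero polynomial of degree $<m$, and by isospectrality $f-g$ "interacts" with the eigenstructure in a way that must violate coprimeness of $(f-\lambda)a=a(g-\lambda)$ for the finitely many $\lambda$ that are roots of some relevant resultant/discriminant, contradicting the all-$\lambda$ hypothesis. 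Making this contradiction precise — choosing the right $\lambda$ and the right common factor to exhibit — is the delicate point, and I would lean on Lemma \ref{l:twoeig}, Lemma \ref{l:isospecfactors}, and the strong distributive factor lattice property \cite[Corollary 4.3.4]{cohn} to force the rigidity. Once $f=g$ is established in case (b), case (a) follows immediately as noted, and I would remark that the passage to non-composite representatives via Lemma \ref{lem:isospec_non_comp} is not needed here because the hypotheses already supply the intertwiner directly.
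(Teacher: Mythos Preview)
Your proposal has two genuine gaps.

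First, the reduction of (a) to (b) does not work. Hypothesis (a) only gives, for each $\lambda$ separately, a coprime relation $(f-\lambda)a_\lambda=b_\lambda(g-\lambda)$; there is no reason the $a_\lambda$ coincide as $\lambda$ varies, nor that $a_\lambda=b_\lambda$. So (a) does not hand you a single intertwiner, let alone an irreducible one. The paper handles (a) by first \emph{constructing} a pair $A_0(t),B_0(t)\in\kk[t]\otimes\px$ solving $(f-t)A_0(t)=B_0(t)(g-t)$: one sets up this linear system with the degree bounds of Lemma~\ref{l:comax_bds}(a), observes it is solvable after specializing $t\mapsto\lambda$ for every $\lambda$, and concludes it is solvable over $\kk[t]$. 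Note $A_0\neq B_0$ in general, so this is not an intertwining relation.

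Second, for (b) your leading-term induction does not close. The relation $f_m a_r=a_r g_m$ at top degree is not a priori coprime, so you cannot immediately invoke Proposition~\ref{p:notused} to get $g_m=\mu f_m$; and even granting $f_m=g_m$, the degree-$(d+r)$ component of $fa=ag$ mixes $f_d a_r-a_r g_d$ with cross-terms like $f_m a_{r-(m-d)}-a_{r-(m-d)}f_m$ that do not cancel. Your fallback via conjugation in the free skew field likewise does not yield $f=g$ in $\px$: conjugacy $a^{-1}fa=g$ holds for any intertwined pair, yet Example~\ref{exa:long} exhibits intertwined $f\neq g$.

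The idea you are missing is an interpolation-to-$\kk[t]$ step. The paper's route for both cases is: show that $(f-t)a=a(g-t)$ (respectively $(f-t)A_0(t)=B_0(t)(g-t)$) is a \emph{comaximal} relation in $\kk[t]\otimes\px$, and then invoke \cite[Proposition~0.6.5]{cohn}, which says exactly that a comaximal relation of this shape over $\kk[t]\otimes\px$ forces $f=g$. Comaximality over $\kk[t]$ is established by writing the affine system $(f-t)A(t)+aB(t)=1$ (with the degree bounds of Lemma~\ref{l:comax_bds}(b)), checking that its specialization at every $\lambda\in\kk$ has a \emph{unique} solution (existence from the hypothesis, uniqueness from coprimeness), and then using the Smith normal form over $\kk[t]$: a $\kk[t]$-linear system whose specialization at every $\lambda$ is uniquely solvable must have invertible pivots and hence a global solution in $\kk[t]$. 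This Smith-form interpolation is the engine of the proof, and nothing in your outline substitutes for it.
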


\begin{proof}
Assume (a) holds. Let $\lambda\in\kk$ be arbitrary. Since $f-\lambda$ and $g-\lambda$ are stably associated, $g-\lambda$ is also irreducible. 
Consider the system of linear equations over $\kk[t]$,
\begin{equation}\label{e:linsys}
(f-t)A(t)=B(t)(g-t),\qquad \deg A(t),\deg B(t)<\deg f
\end{equation}
in coefficients of $A(t),B(t)\in\kk[t]\otimes R$. By stable association and Lemma \ref{l:comax_bds}(a), for every $\lambda\in\kk$, the specialization of the linear system \eqref{e:linsys} at $t=\lambda$ admits a nonzero solution.
Therefore, the system \eqref{e:linsys} itself admits a nonzero solution $A_0(t),B_0(t)\in\kk[t]\otimes R$. Furthermore, by factoring out common factors in $\kk[t]$, we can assume that $A_0(\lambda),B_0(\lambda)\neq0$ for all $\lambda\in\kk$.
Since $f-\lambda$ and $g-\lambda$ are irreducible, it follows that $(f-\lambda)A(\lambda)=B(\lambda)(g-\lambda)$ is a coprime/comaximal relation. 

Now consider the system of affine equations over $\kk[t]$,
\begin{equation}\label{e:affsys}
(f-t)A(t)+B_0(t)B(t)=1,\qquad \deg A(t),\deg B(t)<\deg f
\end{equation}
in coefficients of $A(t),B(t)\in\kk[t]\otimes R$. 
By the preceding paragraph and Lemma \ref{l:comax_bds}(b), for every $\lambda\in\kk$, the specialization of the linear system \eqref{e:affsys} at $t=\lambda$ admits a solution. Furthermore, this solution is necessarily unique: if $a,b$ and $\tilde{a},\tilde{b}$ solve \eqref{e:affsys} at $t=\lambda$, then $(f-\lambda)(a-\tilde a)=B_0(\tilde b-b)$; irreducibility of $f-\lambda$ and degree comparison then imply $a-\tilde a=0$ and $\tilde b-b=0$. Let us transform the system \eqref{e:affsys} into a Smith normal matrix form,
$$\begin{pmatrix}
u_1&&& \\ &\ddots&& \\ && u_r&\phantom{a} \\ &&& \phantom{\vdots}
\end{pmatrix}z=
\begin{pmatrix}v_1\\ \vdots \\ v_r \\ \vdots
\end{pmatrix},\qquad u_j\in\kk[t]\setminus\{0\},\ v_j\in\kk[t],$$
where $z$ is the vector of coefficients of $A(t),B(t)$.
Since \eqref{e:affsys} at $t=\lambda$ is uniquely solvable, it follows that $v_j(\lambda)=0$ for $j>r$, and $u_j(\lambda)\neq0$ for $1\le j\le r$. Since this holds for every $\lambda\in\kk$, it follows that $u_1,\dots,u_r\in\kk\setminus\{0\}$, and $v_j=0$ for $j>r$. Thus, \eqref{e:affsys} admits a solution $A_1(t),B_1(t)\in\kk[t]\otimes R$. 

In the same way we see that there are $A_2(t),B_2(t)\in\kk[t]\otimes R$ satisfying $A_2(t)A_0(t)+B_2(t)(g-t)=1$. Together with \eqref{e:linsys} and \eqref{e:affsys}, this means that $(f-t)A_0(t)=B_0(t)(g-t)$ is a comaximal relation in $\kk[t]\otimes  R$. Hence, $f=g$ by \cite[Proposition 0.6.5]{cohn}.

Now, assume (b) holds. By \cite[Proposition 0.6.5]{cohn}, it suffices to see that $(f-t)a=a(g-t)$ is a comaximal relation in $\kk[t]\otimes R$.
Consider the system of affine equations over $\kk[t]$,
\begin{equation}\label{e:affsys1}
(f-t)A(t)+a B(t)=1,\qquad \deg A(t)<\deg a,\deg B(t)<\deg f
\end{equation}
in coefficients of $A(t),B(t)\in\kk[t]\otimes R$. 
By the assumption and Lemma \ref{l:comax_bds}(b), for every $\lambda\in\kk$, the specialization of the linear system \eqref{e:affsys1} at $t=\lambda$ admits a solution. Moreover, this solution is unique. Suppose, way of contradiction, that $c,d$ and $\tilde{c},\tilde{d}$ solve \eqref{e:affsys1} at $t=\lambda$, and $c\neq\tilde{c}$. Then $(f-\lambda)(c-\tilde c)=a(\tilde d-d)$, and so
$$(\tilde d-d)(c-\tilde c)^{-1} = a^{-1}(f-\lambda) = (g-\lambda)a^{-1}$$
in the universal skew field of fractions of $ R$.
Thus, there exists $0\neq q\in R$ such that $c-\tilde c=aq$ by left comaximality of $a$ and $g-\lambda$ \cite[Proposition 2.3.10]{cohn}, contradicting $\deg(c-\tilde c)<\deg a$.
From hereon, the same argument involving a Smith normal form as in the case (a) applies, showing that $(f-t)a=a(g-t)$ is comaximal in $\kk[t]\otimes R$, and thus $f=g$.
\end{proof}

The irreducibility assumption in Proposition \ref{p:irr_similar}(a) is essential, as the following example demonstrates.

\begin{exa}\label{exa:unexpected}
Denote
\begin{align*}
&a=yx^3y+xy+yx,\quad b=xyxyx+xy+yx,\\
&u=1+x^2y,\quad v=1+xyx,\quad w=1+yx^2.
\end{align*}
Observe that these five polynomials are irreducible, $bu=va$ and $av=wb$. Thus, $a,b$ are stably associated, and $u,v,w$ are stably associated. 
In particular, $ab,w$ are left coprime and $ba,u$ are right coprime. Hence, $(ab)u=w(ba)$ is a coprime relation.
By Theorem \ref{t:st_assoc} it follows that $ab$ and $ba$ are stably associated, yet $ab\neq ba$.\footnote{
To authors' best knowledge, the presented pair $a,b$ is the first known example of this phenomenon; cf. the open question in \cite[Exercise 0.5.4]{cohn}.
} 
Thus, $f=ab$ and $g=ba$ are distinct polynomials such that $f-\lambda$ and $g-\lambda$ are stably associated for every $\lambda\in\kk$ (though $f-t$ and $g-t$ are not stably associated in $\kk[t]\otimes\px$ by \cite[Proposition 0.6.5]{cohn}).
Since $f-\lambda$ and $g-\lambda$ are rank-equivalent for all $\lambda\in\kk$, it follows (by looking at the possible Jordan canonical forms) that $f$ and $g$ are pointwise similar on all pairs of $3\times 3$ matrices. On the other hand, if $\xi=\sqrt{29+13\sqrt{5}}$ and
$$X=\begin{pmatrix}
1&0&0&0\\0&\frac{\sqrt{5}-1}{2}&0&0\\0&0&-1&0\\
0&0&0&\frac{11-5\sqrt{5}}{4}\xi
\end{pmatrix},\quad
Y=\begin{pmatrix}
0&\frac{-5-\sqrt{5}}{10}&0&0\\
1&-1&2&0\\
1&\frac{-1}{2\sqrt{5}}&0&\frac{5-3\sqrt{5}}{10}\xi\\
\frac{\sqrt{5}-3}{4}&\frac{-\sqrt{5}}{2}&\frac{\sqrt{5}-3}{2}&\xi-4-2\sqrt{5}
\end{pmatrix},$$
then $f(X,Y)^2=0$ and $g(X,Y)^2\neq0$. 
Hence, $f(X,Y)$ and $g(X,Y)$ are not similar.\footnote{
There is no particular significance behind the numbers in $X$ and $Y$; this matrix pair was constructed essentially by brute force.
}
\end{exa}

On the other hand, Proposition \ref{p:irr_similar}(b) is one of the ingredients for showing that pointwise similar polynomials are equal. 
Another ingredient is the absence of nonzero homomorphisms between certain modules over $\px$, which is established by the following two lemmas.

\begin{lem} \label{lem:ext_t_neq_0}
Let $f \in R=\px$, and let $p, q \in \kk[t]$ be coprime univariate polynomials. Then, 
$$\Hom_R\big(R/Rp(f), R/Rq(f)\big) = \Ext^1_R\big(R/Rp(f), R/Rq(f)\big) = \{0\}.$$
In particular, $R/R(p(f)q(f)) \cong R/Rp(f) \oplus R/Rq(f)$.
\end{lem}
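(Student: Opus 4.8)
The plan is to exploit the fact that since $p,q\in\kk[t]$ are coprime, there exist $u,v\in\kk[t]$ with $up+vq=1$; substituting $t\mapsto f$ gives $u(f)p(f)+v(f)q(f)=1$ in $R$. Set $P=p(f)$, $Q=q(f)$, so that $u(f)P+v(f)Q=1$. I would first handle the $\Hom$ statement. A homomorphism $\phi\in\Hom_R(R/RP,R/RQ)$ is determined by $\phi(1+RP)=a+RQ$ for some $a\in R$, subject to the well-definedness constraint $Pa\in RQ$, i.e.\ $Pa=bQ$ for some $b\in R$. The key point is that $P$ and $Q$ are both polynomials in $f$, hence commute, so $Pa=aQ$ would be the natural guess — but $a$ need not commute with $f$. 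Instead I would argue via the relation $u(f)P+v(f)Q=1$: from $Pa=bQ$ one gets $a = u(f)Pa + v(f)Qa = u(f)bQ + v(f)Qa$. Since $u(f)$ and $v(f)$ commute with $Q$... wait, $Qa$ need not be in $RQ$ either. The cleaner route is to observe that $P$ acts invertibly on $R/RQ$: indeed on $R/RQ$ we have $u(f)P = 1 - v(f)Q \equiv 1$, so left multiplication by $P$ on $R/RQ$ is an isomorphism with inverse left multiplication by $u(f)$ (using that $P$, $u(f)$, $Q$, $v(f)$ pairwise commute, being polynomials in $f$). Now for $\phi\in\Hom_R(R/RP,R/RQ)$, the element $m=\phi(1+RP)$ satisfies $P\cdot m = \phi(P+RP)=\phi(0)=0$ in $R/RQ$; since $P$ acts invertibly on $R/RQ$, $m=0$, hence $\phi=0$.

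For the $\Ext^1$ statement, I would use the free resolution $0\to R \xrightarrow{\;\cdot P\;} R \to R/RP\to 0$ of the right\,... actually, since these are left modules, the resolution is $0\to R\xrightarrow{\rho_P} R\to R/RP\to 0$ where $\rho_P$ is right multiplication by $P$ (a map of left $R$-modules). Then $\Ext^1_R(R/RP,N)$ is the cokernel of $\Hom_R(R,N)\xrightarrow{(\rho_P)^*}\Hom_R(R,N)$, i.e.\ of $N\xrightarrow{\;\cdot P\;}N$ where now $\cdot P$ denotes the map $n\mapsto$ (the image under the connecting map), which after the standard identification $\Hom_R(R,N)\cong N$ becomes left multiplication... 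I need to be careful with sides here. The upshot: $\Ext^1_R(R/RP,N)\cong N/PN$ when the resolution is by right multiplication, or a kernel/cokernel of an appropriate endomorphism of $N$ induced by $P$. Taking $N=R/RQ$, the relevant endomorphism is (up to identification) left or right multiplication by $P$ on $R/RQ$, which we showed is bijective. Hence $\Ext^1_R(R/RP,R/RQ)=0$.

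The last assertion follows from a standard argument: the short exact sequence $0\to Rp(f)/R(p(f)q(f))\to R/R(p(f)q(f))\to R/Rp(f)\to 0$ has the property that $Rp(f)/R(p(f)q(f))\cong R/Rq(f)$ (left multiplication by $p(f)$ gives the isomorphism $R/Rq(f)\to Rp(f)/Rp(f)q(f)$, using that $r q(f)\in R$ maps into $R p(f) q(f)$ iff $r\in R p(f)$, since... here one uses that $R$ is a domain and $p(f)$ is a non-zero-divisor together with the fact that $p(f), q(f)$ commute). Since $\Ext^1_R(R/Rp(f),R/Rq(f))=0$, this sequence splits, giving $R/R(p(f)q(f))\cong R/Rp(f)\oplus R/Rq(f)$.

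The main obstacle I anticipate is the bookkeeping of left-versus-right multiplication: because we are working with left $R$-modules presented as $R/R(\cdot)$, the free resolutions are given by \emph{right} multiplication maps, and one must track carefully which multiplications survive as module maps and which of $u(f),v(f),p(f),q(f)$ commute with what. The saving grace throughout is that all these elements lie in the commutative subring $\kk[f]\subseteq R$, so they commute with each other; the only non-commuting ingredient is a general $a\in R$, and the whole argument is arranged so that such $a$ only appears being acted on by the invertible operator "multiplication by $P$", never needing to commute past anything.
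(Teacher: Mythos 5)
Your approach mirrors the paper's: resolve $R/Rp(f)$ freely, apply $\Hom_R(-,R/Rq(f))$, and invoke the B\'ezout relation $u(f)p(f)+v(f)q(f)=1$. However, the pivotal step — ``left multiplication by $P$ on $R/RQ$ is an isomorphism with inverse left multiplication by $u(f)$'' — does not go through. For $L_{u(f)}\circ L_P$ to be the identity on $R/RQ$, one needs $u(f)Pr-r=-v(f)Qr\in RQ$ for \emph{every} $r\in R$, and this would require $Qr\in RQ$, i.e., $q(f)$ to normalize the left ideal $Rq(f)$ — which fails in the free algebra whenever $r$ does not commute with $f$. (You yourself noticed exactly this obstruction a few lines earlier, ``$Qa$ need not be in $RQ$ either'', but the ``cleaner route'' runs into the same wall.) The identity $u(f)P\equiv 1\pmod{RQ}$ is a statement about a single coset, not about the operator $L_{u(f)P}$ on the whole module, because $u(f)P-1$ lies in $RQ$ but not in the annihilator (the bound) of $R/RQ$. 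Concretely, with $f=x$, $p=t$, $q=t-1$, the map $L_x$ on $R/R(x-1)$ is \emph{not} surjective: one checks that $y+Rq(f)$ is not of the form $xr+Rq(f)$, so this step cannot be repaired as stated. (The B\'ezout relation does show that \emph{right} multiplication by $p(f)$ on $R/Rq(f)$ is invertible with inverse right multiplication by $u(f)$, but right multiplication is not the map induced on $\Hom_R(R,R/Rq(f))\cong R/Rq(f)$ by the free resolution.)

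What survives and how to prove it: injectivity of $L_P$ — hence the vanishing of $\Hom_R(R/Rp(f),R/Rq(f))$ — is true, but the right tool is factorization theory rather than the operator argument. A nonzero homomorphism forces, by \cite[Exercise 3.2.11]{cohn}, an irreducible factor of $p(f)$ to be stably associated to a factor of $q(f)$; writing $p,q$ as products of linear factors over the algebraically closed $\kk$ and using the unique factorization of $\px$, this produces stably associated factors of $f-\alpha$ and $f-\beta$ with $\alpha\ne\beta$, contradicting Lemma~\ref{l:twoeig}. (This is precisely Lemma~\ref{l:tech_isospec_split} with $g=f$.) Likewise, the direct-sum decomposition $R/R(p(f)q(f))\cong R/Rp(f)\oplus R/Rq(f)$ should not be deduced from a blanket $\Ext^1$-vanishing; it follows directly from the Chinese Remainder Theorem once one checks that $Rp(f)+Rq(f)=R$ (clear from $a(f)p(f)+b(f)q(f)=1$) and $Rp(f)\cap Rq(f)=R(p(f)q(f))$ (from $rp(f)=sq(f)$, multiplying $p(f)a(f)+q(f)b(f)=1$ on the left by $s$ and using that $a(f),b(f),p(f),q(f)$ all commute gives $s=(sa(f)+rb(f))p(f)\in Rp(f)$, whence $rp(f)=sq(f)\in Rp(f)q(f)=R(p(f)q(f))$). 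I'd recommend reorganising your argument along these lines: prove $\Hom=0$ via stable association, and prove the splitting via the CRT computation; the operator-invertibility shortcut cannot carry the weight you want it to.
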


\begin{proof}
Let us consider the free resolution of the left module $R/Rp(f)$,
$$0\xrightarrow{\phantom{\ \cdot p(f)}}
R\xrightarrow{\ \cdot p(f)}R
\xrightarrow{\phantom{\ \cdot p(f)}}R/Rp(f)\xrightarrow{\phantom{\ \cdot p(f)}} 0.$$
Applying the functor $\Hom_R(\rule{2ex}{0.07ex}, R/Rq(f))$, we get the exact sequence
\begin{equation}\label{e:exact}
\begin{aligned}
0&\xrightarrow{\phantom{\ \cdot p(f)}}
&\Hom_R(R/Rp(f), R/Rq(f))
&\xrightarrow{\phantom{\ \cdot p(f)}}&R/q(f)R
&\xrightarrow{\ \cdot p(f)}
&R/q(f)R \\
&\xrightarrow{\phantom{\ \cdot p(f)}}
&\Ext^1_R(R/Rp(f), R/Rq(f))
&\xrightarrow{\phantom{\ \cdot p(f)}}&0\,.\qquad& &
\end{aligned}
\end{equation}
Here, we used the isomorphism $\Hom_R(R,R/Rq(f))\to R/Rq(f)$ determined by $\varphi\mapsto\varphi(1)$. 
Since $p$ and $q$ are coprime, there exist $a,b \in \kk[t]$, such that $pa + bq = 1$. Therefore, $a(f)p(f) + b(f)q(f) = 1$. So the induced map of right multiplication by $p(f)$ has an inverse given by right multiplication by $a(f)$. Therefore, the exact sequence \eqref{e:exact} implies the first part of the lemma. Since all extensions of $R/Rp(f)$ by $R/Rg(f)$ are trivial, the second part of the lemma follows.
\end{proof}

We also require a partial generalization of Lemma \ref{lem:ext_t_neq_0}.

\begin{lem}\label{l:tech_isospec_split}
If $f,g\in\px$ are isospectral and $p,q\in\kk[t]$ are coprime, then
$$\Hom_R\big(R/Rp(f), R/Rq(g)\big)=\{0\}.$$
\end{lem}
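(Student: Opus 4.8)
The plan is to reduce to the case already handled in Lemma \ref{lem:ext_t_neq_0} by exploiting the structure afforded by isospectrality via the non-composite reduction of Lemma \ref{lem:isospec_non_comp}. First I would write $f=P(\tilde f)$, $g=P(\tilde g)$ for a single $P\in\kk[t]$ and non-composite isospectral $\tilde f,\tilde g$, as provided by Lemma \ref{lem:isospec_non_comp}. Then $p(f)=p(P(\tilde f))=\bar p(\tilde f)$ and $q(g)=\bar q(\tilde g)$ for $\bar p=p\circ P$, $\bar q=q\circ P\in\kk[t]$. Since $p$ and $q$ are coprime and $P$ is a common ``inner'' polynomial, $\bar p$ and $\bar q$ need \emph{not} be coprime, but their only common factors come from $P$; more precisely $\gcd(\bar p,\bar q)$ divides a power of $\gcd(p,q)\circ P = 1$ composed appropriately — I would need to check that $\gcd(p\circ P,\,q\circ P)=1$, which holds because a common root $\alpha$ of $p\circ P$ and $q\circ P$ would force $P(\alpha)$ to be a common root of $p$ and $q$. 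So in fact $\bar p,\bar q$ \emph{are} coprime, and it would suffice to prove the lemma with $p,q$ replaced by $\bar p,\bar q$ and $f,g$ by $\tilde f,\tilde g$. Hence \textbf{we may assume $f,g$ are non-composite}.

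Next I would use the factorization-theoretic consequences of isospectrality established just above. Factor $p=\prod_i (t-\alpha_i)^{m_i}$ and $q=\prod_j (t-\beta_j)^{n_j}$; coprimeness means $\{\alpha_i\}\cap\{\beta_j\}=\varnothing$. For non-composite isospectral $f,g$, Lemma \ref{lem:isospec_non_comp}'s proof shows $f-\gamma$ and $g-\gamma$ are stably associated for all but finitely many $\gamma\in\kk$, and in fact (enlarging the bad set if needed) $f-\alpha_i$ and $f-\beta_j$, respectively $g-\alpha_i$ and $g-\beta_j$, factor into irreducibles with no stably-associated overlap by Lemma \ref{l:twoeig}. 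The module $R/Rp(f)$ has a filtration whose subquotients are $R/R(f-\alpha_i)'$ for irreducible factors of $f-\alpha_i$, and likewise $R/Rq(g)$ is filtered by $R/R(g-\beta_j)'$. A nonzero homomorphism $R/Rp(f)\to R/Rq(g)$ would, by the filtrations and the snake-lemma/long-exact-sequence bookkeeping, produce a nonzero homomorphism between some subquotient of the first and some sub or quotient of the second — concretely a nonzero map $R/Rp_0 \to R/Rq_0$ where $p_0$ is a factor of $f-\alpha_i$ and $q_0$ a factor of $g-\beta_j$. Since $\End_R(R/Rh)$-type Hom's between $R/Rp_0$ and $R/Rq_0$ are governed by stable association (Theorem \ref{t:st_assoc}, via \cite[Corollary 0.5.5]{cohn} and finite-dimensionality of the relevant Hom spaces), such a nonzero map forces a factor of $f-\alpha_i$ to be stably associated to a factor of $g-\beta_j$. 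But isospectrality gives $g-\beta_j$ stably associated to $f-\beta_j$, so a factor of $f-\alpha_i$ is stably associated to a factor of $f-\beta_j$ with $\alpha_i\ne\beta_j$ — contradicting Lemma \ref{l:twoeig}.

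A cleaner way to package the filtration argument, which I would prefer to write, is: let $r=\gcd(p,q)$ in $\kk[t]$ over the algebraic closure step by step. Actually since $\kk$ is algebraically closed and $p,q$ coprime, I would instead induct on $\deg p + \deg q$, peeling off one linear factor at a time using the short exact sequences $0\to R/R(f-\alpha_i)\xrightarrow{\cdot(p/(t-\alpha_i))(f)} R/Rp(f)\to R/R(p/(t-\alpha_i))(f)\to 0$ and applying $\Hom_R(-,R/Rq(g))$, together with the analogous sequence on the target side and $\Ext^1$-vanishing inputs. The base case is a nonzero map $R/R(f-\alpha)\to R/R(g-\beta)$ with $\alpha\ne\beta$: by \cite[Corollary 0.5.5]{cohn} and the abelian-category setup of Lemma \ref{l:infrank}, any nonzero morphism between such cyclic torsion modules has image $R/R h$ for a common factor $h$ of $f-\alpha$ (on a quotient) and of $g-\beta$ (on a sub), giving a factor of $f-\alpha$ stably associated to a factor of $g-\beta$; combined with isospectrality ($g-\beta$ stably associated to $f-\beta$) this contradicts Lemma \ref{l:twoeig}. \textbf{The main obstacle} I anticipate is the homological bookkeeping in the inductive step: to conclude $\Hom_R(R/Rp(f),R/Rq(g))=0$ one needs not just vanishing of $\Hom$ but control of $\Ext^1$ into the subquotients of the target, and the target subquotients $R/R(g-\beta_j)'$ are not of the split form covered by Lemma \ref{lem:ext_t_neq_0}; so I would either need an $\Ext^1$-vanishing statement for $\Hom_R(R/Rp(f), R/R(g-\beta))$-type pairs — plausibly provable by the same free-resolution argument as in Lemma \ref{lem:ext_t_neq_0} once the $\Hom$ vanishes and one chases the six-term sequence — or I would argue directly on the level of the map: given $\varphi:R/Rp(f)\to R/Rq(g)$ nonzero, restrict to the bottom piece of the source filtration where it is either zero (then descend) or nonzero (then hit the base case), thereby avoiding $\Ext^1$ altogether. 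This ``restrict to a submodule and run the base case'' tactic is, I expect, the most robust route and the one I would commit to in the write-up.
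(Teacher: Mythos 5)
Your core argument is the same as the paper's: a nonzero homomorphism $R/Rp(f)\to R/Rq(g)$ forces an irreducible factor of $p(f)$ to be stably associated to a factor of $q(g)$, which by Lemma~\ref{l:isospecfactors} is in turn stably associated to a factor of $q(f)$; since $\kk$ is algebraically closed, these factors live in $f-\alpha$ and $f-\beta$ with $\alpha$ a root of $p$ and $\beta$ a root of $q$, coprimeness gives $\alpha\ne\beta$, and Lemma~\ref{l:twoeig} yields the contradiction. You arrive at this, so the proposal is correct in substance, but two comments are worth making.

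First, the opening reduction to non-composite $f,g$ via Lemma~\ref{lem:isospec_non_comp} is an unnecessary detour: the argument just described works verbatim for arbitrary isospectral $f,g$. Worse, the reduction seems to have tempted you into the incorrect claim that ``isospectrality gives $g-\beta$ stably associated to $f-\beta$''. Isospectral polynomials need not have stably associated translates (they are not rank-equivalent in general, cf.\ $xy$ vs.\ $yx$). What isospectrality actually gives you is exactly Lemma~\ref{l:isospecfactors}: every irreducible factor of $q(g)$ is stably associated to \emph{some} factor of $q(f)$, which is all you need. Second, the filtration/snake-lemma/$\Ext^1$ apparatus you set up and then worry about is dispensable: the statement ``a nonzero map $R/Ra\to R/Rb$ forces an irreducible factor of $a$ to be stably associated to a factor of $b$'' is \cite[Exercise 3.2.11]{cohn}, which the paper invokes directly. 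Your final ``restrict to a simple submodule of the source; if the restriction is nonzero, its image is a simple submodule of the target; if zero, pass to the quotient and recurse'' is in effect a correct proof of that exercise (so your concern about controlling $\Ext^1$ into the target subquotients was rightly abandoned in favour of this), but it is a long way around a citable fact, and it should be stated with a complete-factorization refinement of $p(f)$ rather than a peeling of single linear factors $f-\alpha_i$, since the latter are not generally irreducible.
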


\begin{proof}
Suppose $\Hom_R(R/Rp(f), R/Rq(g))\neq\{0\}$. Then an irreducible factor $h$ of $p(f)$ is stably associated to a factor of $q(g)$ by \cite[Exercise 3.2.11]{cohn}. consequently to a factor of $q(f)$ by Lemma \ref{l:isospecfactors}. 
Thus, there exist distinct $\alpha,\beta\in\kk$ (the first a root of $p$, the second a root of $q$) such that a factor of $f-\alpha$ is stably associated to a factor of $f-\beta$. However, this is a contradiction by Lemma \ref{l:twoeig}. Hence, $\Hom_R(R/Rp(f), R/Rq(g))=\{0\}$.
\end{proof}

The next lemma allows us to reduce the pointwise similarity characterization to the case of non-composite polynomials.

\begin{lem}\label{lem:reduce_non_comp}
Every pair of pointwise similar polynomials in $\px$ is of the form $p(f),p(g)$ where $p\in\kk[t]$, and $f,g\in\px$ are non-composite and pointwise similar.
\end{lem}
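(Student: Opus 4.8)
The statement follows the now-familiar template of Lemma~\ref{lem:isospec_non_comp}: write $f = p(\tilde f)$ and $g = q(\tilde g)$ for monic $p,q\in\kk[t]$ and non-composite $\tilde f,\tilde g$, and show $p=q$ (up to an affine change of variable) and that $\tilde f,\tilde g$ are pointwise similar. The key point is that pointwise similarity of $f$ and $g$ is the assertion that $(f-\lambda)^k$ and $(g-\lambda)^k$ are stably associated for all $\lambda\in\kk$ and all $k\in\N$ (via Jordan canonical forms, as noted in the introduction to Section~\ref{sec:sim}), and in particular $f$ and $g$ are isospectral. So first I would invoke Lemma~\ref{lem:isospec_non_comp} to get $p$, $q$, $\tilde f$, $\tilde g$ with $f=p(\tilde f)$, $g=q(\tilde g)$, $\tilde f,\tilde g$ non-composite and isospectral; its proof actually gives more, namely that $p=q$ after an affine transformation (so WLOG $p=q$) and that $\tilde f-\beta$, $\tilde g-\beta$ are stably associated for all but finitely many $\beta\in\kk$.

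\textbf{Upgrading stable association of $\tilde f-\beta$ to pointwise similarity of $\tilde f,\tilde g$.} It remains to show $\tilde f$ and $\tilde g$ are pointwise similar, i.e.\ that $(\tilde f-\lambda)^m$ and $(\tilde g-\lambda)^m$ are stably associated for every $\lambda\in\kk$ and $m\in\N$. Fix $\lambda$ and $m$. The idea is to pull this back from a statement about $f=p(\tilde f)$ and $g=p(\tilde g)$. Choose $\mu\in\kk$ such that $p-\mu$ has $\deg p$ pairwise distinct roots $\beta_1,\dots,\beta_r$, and moreover such that one of these roots, say $\beta_1$, equals $\lambda$ while the others $\beta_2,\dots,\beta_r$ avoid the finitely many ``bad'' values where stable association of $\tilde f-\beta_j,\tilde g-\beta_j$ might fail; such $\mu$ exists because the set of admissible $\mu$ is cofinite and the condition ``$\lambda$ is a root of $p-\mu$'' determines $\mu$ (it forces $\mu=p(\lambda)$, and then one checks $p-p(\lambda)$ has distinct roots away from the bad set for all but finitely many choices of the remaining structure — more carefully, one should first enlarge the bad set to include the critical values of $p$, then observe $p^{-1}(p(\lambda))$ consists of $\deg p$ distinct points for $\lambda$ outside finitely many exceptions, and handle those few $\lambda$ separately by a limiting/continuity argument or by a direct factorization). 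Then $(f-\mu)^m = \bigl(\prod_j(\tilde f-\beta_j)\bigr)^m$; using that the factors $\tilde f-\beta_j$ are pairwise ``coprime-modulo-isospectrality'' (Lemma~\ref{lem:ext_t_neq_0} and Lemma~\ref{l:tech_isospec_split}: $R/R(\tilde f-\beta_i)^a$ and $R/R(\tilde f-\beta_j)^b$ have no nonzero homs or $\mathrm{Ext}^1$ for $i\ne j$), the module $R/R(f-\mu)^m$ splits as a direct sum $\bigoplus_j R/R(\tilde f-\beta_j)^m$, and likewise for $g$. Pointwise similarity of $f,g$ gives $R/R(f-\mu)^m \cong R/R(g-\mu)^m$ (by Theorem~\ref{t:st_assoc}(ii)); matching the indecomposable-type summands — again using the Hom-vanishing to see the decomposition is essentially unique, i.e.\ a Krull–Schmidt-type argument in the abelian category $\mathscr T$ of Lemma~\ref{l:infrank} — forces $R/R(\tilde f-\beta_1)^m\cong R/R(\tilde g-\beta_1)^m$, that is, $(\tilde f-\lambda)^m$ and $(\tilde g-\lambda)^m$ are stably associated. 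Since $\lambda,m$ were arbitrary, $\tilde f$ and $\tilde g$ are pointwise similar.

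\textbf{Where the difficulty lies.} The routine parts are the reduction via Lemma~\ref{lem:isospec_non_comp} and the module-splitting via Lemmas~\ref{lem:ext_t_neq_0} and~\ref{l:tech_isospec_split}. The main obstacle is the uniqueness of the decomposition: one needs that the summands $R/R(\tilde f-\beta_j)^m$ are ``independent enough'' that an isomorphism $\bigoplus_j R/R(\tilde f-\beta_j)^m \cong \bigoplus_j R/R(\tilde g-\beta_j)^m$ can be refined to isomorphisms of corresponding summands. The Hom-vanishing between the cross terms (distinct $\beta$'s) gives this by a standard argument: projecting the isomorphism and its inverse onto the $\beta_1$-components and using that all cross maps vanish shows the $\beta_1$-component is an isomorphism. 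A secondary technical nuisance is ensuring, for the finitely many exceptional $\lambda$ (critical values of $p$, or values where the clean choice of $\mu$ fails), that pointwise similarity of $\tilde f,\tilde g$ at those $\lambda$ still holds — this can be dispatched by noting that stable association of $(\tilde f-\lambda)^m,(\tilde g-\lambda)^m$ for $\lambda$ in a cofinite set already pins down the relevant data (Jordan block structure is lower-semicontinuous in a suitable sense), or more cleanly by observing that pointwise similarity is what we are proving and it suffices to establish it; since the exceptional set is finite and the argument above works for a cofinite set of $\lambda$, and pointwise similarity of $\tilde f,\tilde g$ on all tuples is equivalent to stable association of all $(\tilde f-\lambda)^m,(\tilde g-\lambda)^m$, one can alternatively feed the cofinite conclusion back through Proposition~\ref{p:irr_similar}-type reasoning or simply note the exceptional $\lambda$ contribute no new obstruction because the generic-$\lambda$ conclusion already yields $\tilde f=\tilde g$ would be too strong — instead one keeps the statement at the level of pointwise similarity and accepts that the finitely many $\lambda$ require the same splitting argument with a different auxiliary $\mu$, which exists since only one linear condition on $\mu$ is imposed.
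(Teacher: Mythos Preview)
Your approach is essentially the paper's, but you have created an unnecessary obstacle for yourself. You insist that $p-\mu$ have \emph{distinct} roots, which forces you to worry about the finitely many $\lambda$ with $p(\lambda)$ a critical value of $p$; your proposed workarounds for these are not convincing (in particular, $\mu=p(\lambda)$ is forced, so there is no ``different auxiliary $\mu$'', and stable association is not amenable to a limiting argument). The paper avoids this entirely: for arbitrary $\lambda$, simply factor $p(t)-p(\lambda)=(t-\lambda)^k q(t)$ with $q(\lambda)\neq0$ and apply Lemma~\ref{lem:ext_t_neq_0} to split $R/R\bigl(p(\tilde f)-p(\lambda)\bigr)\cong R/R(\tilde f-\lambda)^k\oplus R/Rq(\tilde f)$, and likewise for $\tilde g$. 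Your Hom-vanishing argument (via Lemma~\ref{l:tech_isospec_split}) then matches the $(\tilde f-\lambda)^k$ and $(\tilde g-\lambda)^k$ summands exactly as you describe; the avoidance of the ``bad $\beta$'' set from the proof of Lemma~\ref{lem:isospec_non_comp} is irrelevant here and can be dropped.

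The one remaining point is that this only yields stable association of $(\tilde f-\lambda)^{mk}$ and $(\tilde g-\lambda)^{mk}$ for all $m$ (by taking powers of $p-p(\lambda)$), where $k\ge1$ is the fixed multiplicity of $\lambda$. To descend to arbitrary exponents one invokes Lemma~\ref{l:power}: if $h_1^k$ and $h_2^k$ are stably associated, then so are $h_1$ and $h_2$. Apply this with $h_j=(\tilde f-\lambda)^m$ and $(\tilde g-\lambda)^m$ to conclude.
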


\begin{proof}
By Lemma \ref{lem:isospec_non_comp}, pointwise similar polynomials are of the form $p(f),p(g)$ where $p\in\kk[t]$ and $f,g\in R$ are non-composite and isospectral. Now fix $\lambda \in \kk$ and note that there exists $k \in \N$, such that $p(t) - p(\lambda) = (t-\lambda)^k q(t)$, where $q\in\kk[t]$ and $(t-\lambda)^k$ are coprime. By the assumption and Lemma \ref{lem:ext_t_neq_0}, 
\begin{equation}\label{e:iso_split}
\begin{split}
R/R(f - \lambda)^k \oplus R/Rq(f) 
&\cong R/R(p(f)-p(\lambda)) \\
&\cong R/R(p(g) - p(\lambda)) \cong R/R(g - \lambda)^k \oplus R/Rq(g).
\end{split}
\end{equation}
Since $\Hom_R(R/R(f - \lambda)^k, R/Rq(g))=\{0\}$ and $\Hom_R(R/Rq(f),R/R(g - \lambda)^k)=\{0\}$ by Lemma \ref{l:tech_isospec_split}, the isomorphism \eqref{e:iso_split} implies $R/R(f-\lambda)^k \cong R/R(g-\lambda)^k$. 
Repeating the above argument with powers of $p - p(\lambda)$, we see that 
$R/R(f-\lambda)^{\ell k} \cong R/R(g-\lambda)^{\ell k}$ for all $\ell \in \N$. By Lemma \ref{l:power}, $(f-\lambda)^{\ell}$ is stably associated to $(g-\lambda)^{\ell}$. 
Thus, $\rk (f(\uX)-\lambda I)^{\ell}=\rk (g(\uX)-\lambda I)^{\ell}$ for all $\lambda\in\kk$, $\ell\in\N$ and matrix tuples $\uX$. Using the Jordan canonical form we then conclude that $f$ and $g$ are pointwise similar.
\end{proof}

We are now ready to show that pointwise similarity is equality.

\begin{thm}\label{t:similar}
Let $f, g \in \px$ be pointwise similar. Then, $f = g$.
\end{thm}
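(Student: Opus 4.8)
The plan is to reduce to the non-composite case and then exhibit a single intertwiner that yields a coprime relation at every scalar, so that Proposition~\ref{p:irr_similar}(b) applies. First, by Lemma~\ref{lem:reduce_non_comp} we may write $f=p(\tilde f)$ and $g=p(\tilde g)$ with $p\in\kk[t]$ fixed and $\tilde f,\tilde g$ non-composite and pointwise similar; since it suffices to prove $\tilde f=\tilde g$, we assume from the outset that $f,g$ are non-composite (the case $f\in\kk$ being immediate, as scalar matrices are similar iff equal). As pointwise similarity implies isospectrality, Theorem~\ref{t:isospec} furnishes a nonzero $a\in R=\px$ with $fa=ag$; fix one of minimal degree, so that $(f-\lambda)a=a(g-\lambda)$ holds for every $\lambda\in\kk$. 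The goal is to show that this relation is coprime for every $\lambda$, after which Proposition~\ref{p:irr_similar}(b) gives $f=g$.

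Non-compositeness guarantees that $f-\lambda$ and $g-\lambda$ are irreducible for all $\lambda$ outside some finite set $E$ (the property exploited in the proof of Lemma~\ref{lem:isospec_non_comp}). For $\lambda\notin E$, coprimeness of $(f-\lambda)a=a(g-\lambda)$ follows from minimality of $a$: if $f-\lambda$ were a left factor of $a$, say $a=(f-\lambda)a'$, then, as $f-\lambda$ commutes with $f$, cancelling $f-\lambda$ on the left in $fa=ag$ (legitimate since $R$ is a domain) yields $fa'=a'g$ with $\deg a'<\deg a$, contradicting minimality; irreducibility of $f-\lambda$ then forces $f-\lambda$ and $a$ to be left coprime. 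Symmetrically, using that $g-\lambda$ commutes with $g$, cancelling $g-\lambda$ on the right shows that $a$ and $g-\lambda$ are right coprime. Hence $(f-\lambda)a=a(g-\lambda)$ is a coprime relation for every $\lambda\notin E$.

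The crux is the finitely many exceptional $\lambda_0\in E$, for which $f-\lambda_0$ is reducible and neither irreducibility nor the cancellations above force coprimeness; indeed, Example~\ref{exa:unexpected} shows that stable association of $f-\lambda$ and $g-\lambda$ alone (equivalently, rank-equivalence of $f-\lambda_0$ and $g-\lambda_0$) is insufficient. What must be used is the full content of pointwise similarity: by the Jordan canonical form and Theorem~\ref{t:rankequiv}, $(f-\lambda_0)^k$ is stably associated to $(g-\lambda_0)^k$ for \emph{every} $k\in\N$. Since $(f-\lambda_0)^ka=a(g-\lambda_0)^k$, right multiplication by $a$ induces compatible left $R$-module maps $\mu_a^{(k)}\colon R/R(f-\lambda_0)^k\to R/R(g-\lambda_0)^k$, $\bar c\mapsto\overline{ca}$, between modules of equal finite length, and $a$ and $g-\lambda_0$ are left comaximal precisely when $\mu_a^{(1)}$ is surjective. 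The plan is to leverage the stable association of all the powers together with the $\Hom$-vanishing Lemmas~\ref{lem:ext_t_neq_0} and~\ref{l:tech_isospec_split} (which isolate the $\lambda_0$-primary behaviour from the rest of the spectrum), e.g.\ by passing to the $(f-\lambda_0)$-adic inverse limit of the $R/R(f-\lambda_0)^k$, to force $\mu_a^{(1)}$ to be an isomorphism; coprimeness of $(f-\lambda_0)a=a(g-\lambda_0)$ then follows from the equivalence of coprimeness and comaximality in a fir, \cite[Corollary~3.1.4]{cohn}. An equivalent route is to mirror the proof of Proposition~\ref{p:irr_similar}(a): solve $(f-t)A(t)=B(t)(g-t)$ with $\deg A,\deg B<\deg f$ over $\kk[t]$ (solvable at each $\lambda$ by stable association and Lemma~\ref{l:comax_bds}(a)), clear common $\kk[t]$-factors, and promote $(f-t)A_0(t)=B_0(t)(g-t)$ to a comaximal relation in $\kk[t]\otimes R$ by a Smith normal form argument, where the auxiliary affine systems are automatically solvable and uniquely solvable at $\lambda\notin E$ by irreducibility and the higher stable associations supply exactly these two properties at the $\lambda_0\in E$. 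In either approach, once a comaximal relation of the form $(f-t)A_0(t)=B_0(t)(g-t)$ is obtained in $\kk[t]\otimes R$, \cite[Proposition~0.6.5]{cohn} concludes $f=g$. I expect this transfer of the higher-power stable associations at the reducible eigenvalues into comaximality of a single relation to be the main obstacle, since it is precisely the information that the weaker rank-equivalence hypothesis fails to provide.
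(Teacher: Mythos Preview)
Your reduction to the non-composite case and the verification that $(f-\lambda)a=a(g-\lambda)$ is coprime whenever $f-\lambda$ is irreducible are correct and clean. The genuine gap is exactly where you flag it: at the finitely many $\lambda_0$ with $f-\lambda_0$ reducible, neither of your proposed routes closes the argument. In route (a), knowing that $R/R(f-\lambda_0)^k\cong R/R(g-\lambda_0)^k$ abstractly for all $k$ does not force the \emph{particular} map $\mu_a^{(k)}$ to be an isomorphism; these modules are not semisimple, equal length does not promote a module map to a bijection, and nothing in the compatible tower $\{\mu_a^{(k)}\}_k$ transmits the abstract isomorphisms to the map induced by $a$. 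In route (b), after clearing common $\kk[t]$-factors you only know $A_0(\lambda_0),B_0(\lambda_0)\neq 0$, which does not yield coprimeness of $(f-\lambda_0)A_0(\lambda_0)=B_0(\lambda_0)(g-\lambda_0)$ once $f-\lambda_0$ is reducible; the affine system then need not be uniquely solvable at $\lambda_0$, and the Smith normal form step stalls. The sentence ``the higher stable associations supply exactly these two properties'' is precisely the missing lemma.

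The paper closes the gap by a different device that avoids attacking the reducible $\lambda_0$ head-on. For a given $\lambda$, choose an auxiliary $\beta$ with $f-\beta$ and $g-\beta$ irreducible. Pointwise similarity makes $(f-\lambda)(f-\beta)$ stably associated to $(g-\lambda)(g-\beta)$, and Lemma~\ref{lem:ext_t_neq_0} splits $R/R\big((f-\lambda)(f-\beta)\big)\cong R/R(f-\lambda)\oplus R/R(f-\beta)$ (and likewise for $g$). The $\Hom$-vanishing of Lemmas~\ref{lem:ext_t_neq_0} and~\ref{l:tech_isospec_split} forces the resulting isomorphism to be block-diagonal, and irreducibility at $\beta$ makes the $(f-\beta)$-block rigid: $\dim_\kk\Hom_R\big(R/R(f-\beta),R/R(g-\beta)\big)=1$ by Theorem~\ref{t:eigering}(b). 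A short computation with these constraints shows that the element $a_\beta$ realizing the product isomorphism already satisfies $(f-\lambda)a_\beta=a_\beta(g-\lambda)$ comaximally. Finally, Remark~\ref{r:unique} identifies each such comaximal intertwiner (after stripping any left factor commuting with $f$) with the fixed minimal $a$, so $(f-\lambda)a=a(g-\lambda)$ is coprime for \emph{every} $\lambda$ and Proposition~\ref{p:irr_similar}(b) applies. The auxiliary irreducible value $\beta$ is what supplies the rigidity that your higher-power approach lacks.
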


\begin{proof}
By Lemma \ref{lem:reduce_non_comp}, we may assume that $f$ and $g$ are non-composite. By \cite[Theorem 3.2]{vol20} there exists $\beta\in\kk\setminus\{0\}$ such that $f-\beta$ and $g-\beta$ are irreducible. 
Let $a \in R\setminus\{0\}$ denote an intertwinner of $f$ and $g$ of minimal degree; note that $(f-\beta)a=a(g-\beta)$ is a coprime relation. Since $f$ and $g$ are pointwise similar, $f(f-\beta)$ is stably associated to $g(g-\beta)$. Then,
\begin{equation}\label{e:quad}
R/Rf \oplus R/R(f-\beta) \cong R/R(f(f-\beta)) \cong R/R(g(g-\beta)) \cong R/Rg \oplus R/R(g-\beta)
\end{equation}
by Lemma \ref{lem:ext_t_neq_0}. The middle isomorphism in \eqref{e:quad} is given by $1 \mapsto a_\beta$ with $f(f-\beta) a_\beta = b_\beta g(g-\beta)$ comaximal. Stable association of $f-\beta$ and $g-\beta$ implies 
$\Hom_R(R/Rf,R/R(g-\beta))\cong \Hom_R(R/Rf,R/R(f-\beta))=\{0\}$
by Lemma \ref{lem:ext_t_neq_0}; similarly, stable association of $f$ and $g$ implies $\Hom_R(R/Rg,R/R(f-\beta))=\{0\}$.
Thus, the maps $R/Rf \to R/Rg$ and $R/R(f-\beta) \to R/R(g-\beta)$ induced by \eqref{e:quad} are isomorphisms. Since \eqref{e:quad} maps $(1,1)$ to $(a_\beta,a_\beta)$, there exist $c_\beta, d_\beta \in R$ such that 
\[
f a_\beta = c_\beta g \quad\text{and}\quad 
(f-\beta) a_\beta = d_\beta (g-\beta)
\]
are comaximal relations. 
Combining these relations gives $\beta (a_\beta - d_\beta) = (c_\beta - a_\beta) g$. 
Since $f-\beta$ and $g-\beta$ are irreducible, Theorem \ref{t:eigering} implies $\dim_{\kk} \Hom_R(R/R(f-\beta), R/R(g-\beta)) = 1$. Therefore, up to rescaling, we may assume that $a_\beta = a + h (g-\beta)$ and $d_\beta = a + (f-\beta) h$ for some $h\in R$. 
Thus, $a_\beta - d_\beta = h(g - \beta) - (f - \beta) h = h g - f h$, and consequently,
\[
(c_\beta - a_\beta) g = \beta (a_\beta - d_\beta) 
= \beta h g - \beta f h.
\]
Hence, $\beta f h = (a_\beta - c_\beta + \beta h) g$. On the other hand,
\[
c_\beta g = f a_\beta = f (a + h (g -\beta)) = f a + f h g - \beta f h = a g + f h g - \beta f h.
\]
Hence, $\beta f h = (a - c_\beta + f h) g$. Combining the two, we get $(a_\beta - c_\beta + \beta h) g = (a - c_\beta + f h) g$. Therefore, $a_\beta = a + (f-\beta) h = d_\beta$, and consequently $a_\beta = c_\beta$, so $f$ and $g$ are comaximally intertwined (i.e., $fa_\beta=a_\beta g$ is comaximal). 

Applying the above argument to $f-\lambda$ and $g-\lambda$ (in place of $f$ and $g$) for arbitrary $\lambda \in \kk$, we conclude that there exists $w_\lambda\in R$ such that
$(f - \lambda)w_\lambda=w_\lambda (g-\lambda)$ is a comaximal relation. Moreover, we may assume that $w_\lambda$ is not left-divisible by an element of $R$ that commutes with $f$ (otherwise, dividing $w_\lambda$ on the left with it again gives $w_\lambda'$ such that $(f - \lambda)w_\lambda'=w_\lambda' (g-\lambda)$ is comaximal).
Since $fw_\lambda=w_\lambda g$, Remark \ref{r:unique} implies that $w_\lambda$ is a scalar multiple of $a$. Hence, $(f - \lambda)a=a(g-\lambda)$ is comaximal for every $\lambda$, so $f=g$ by Proposition \ref{p:irr_similar}(b).
\end{proof}

As a consequence, we obtain a criterion for witnessing noncommutativity in the free algebra through matricial evaluations.

\begin{cor}
If $a,b\in\px$ do not commute, there exists a matrix tuple $\uX$ such that the ranks of $\big(a(\uX)b(\uX)\big)^k$ and $\big(b(\uX)a(\uX)\big)^k$ differ for some $k\in\N$.
\end{cor}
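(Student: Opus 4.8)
The plan is to argue by contradiction. Put $f=ab$ and $g=ba$, and suppose that $\rk\big(a(\uX)b(\uX)\big)^k=\rk\big(b(\uX)a(\uX)\big)^k$ for every matrix tuple $\uX$ and every $k\in\N$. I will show this forces $f$ and $g$ to be pointwise similar; since Theorem~\ref{t:similar} then gives $ab=f=g=ba$, this contradicts the noncommutativity of $a,b$. (Note $a,b\neq0$, as otherwise they commute, so $f,g$ and all the polynomials appearing below are nonzero because $\px$ is a domain.) For each $\uX$ the matrices $f(\uX)$ and $g(\uX)$ are square over $\kk$ of the same size, and two such matrices are similar precisely when, for every $\lambda\in\kk$ and every $k\in\N$, the powers $(f(\uX)-\lambda I)^k$ and $(g(\uX)-\lambda I)^k$ have equal rank (this data determines the Jordan block sizes at each eigenvalue). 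So the task reduces to extending the assumed rank identity from $\lambda=0$ to all $\lambda\in\kk$.

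For $\lambda\neq0$ this should come out of the factorization theory already developed. First I would iterate the elementary intertwining relation $(f-\lambda)a=a(g-\lambda)$ to obtain $(f-\lambda)^ka=a(g-\lambda)^k$ for all $k$. Next, expanding by the binomial theorem and using $(ab)^j=a(ba)^{j-1}b$ and $(ba)^j=b(ab)^{j-1}a$ for $j\ge1$, I would write
$$(f-\lambda)^k=(-\lambda)^k+ac_k,\qquad (g-\lambda)^k=(-\lambda)^k+c_k'a$$
for suitable $c_k,c_k'\in\px$. When $\lambda\neq0$ the constant $(-\lambda)^k$ is nonzero, so a nonconstant common left factor of $(f-\lambda)^k$ and $a$ would have to divide $(-\lambda)^k$, which is impossible; hence $(f-\lambda)^k$ and $a$ are left coprime, and symmetrically $a$ and $(g-\lambda)^k$ are right coprime. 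Thus $(f-\lambda)^ka=a(g-\lambda)^k$ is a coprime relation, so $(f-\lambda)^k$ and $(g-\lambda)^k$ are stably associated by Theorem~\ref{t:st_assoc}, and in particular rank-equivalent. This yields $\rk\big(f(\uX)-\lambda I\big)^k=\rk\big(g(\uX)-\lambda I\big)^k$ for all $\lambda\in\kk\setminus\{0\}$, all $k\in\N$, and all $\uX$.

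Combining this with the contradiction hypothesis (the case $\lambda=0$), the rank identity holds for every $\lambda\in\kk$, every $k$, and every $\uX$; hence $f(\uX)$ and $g(\uX)$ have the same Jordan canonical form, so $f$ and $g$ are pointwise similar, and Theorem~\ref{t:similar} gives $ab=ba$, the desired contradiction. I expect the only genuinely delicate point to be the coprimeness bookkeeping in the second paragraph --- specifically, isolating the nonzero constant $(-\lambda)^k$ inside $(f-\lambda)^k$ and $(g-\lambda)^k$ that makes the intertwining relation remain coprime after passing to powers; once that is in place, the rest is a direct appeal to Theorems~\ref{t:st_assoc} and~\ref{t:similar} together with the Jordan canonical form.
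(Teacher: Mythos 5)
Your argument is correct and takes essentially the same route as the paper: establish stable association of $(ab-\lambda)^k$ and $(ba-\lambda)^k$ for $\lambda\neq 0$ via the coprime relation $(ab-\lambda)^k a = a(ba-\lambda)^k$, combine this rank equivalence with the $\lambda=0$ rank hypothesis and the Jordan canonical form to deduce pointwise similarity, and invoke Theorem~\ref{t:similar}. The paper merely asserts the comaximality of the relation, whereas you supply the binomial-expansion bookkeeping showing that the nonzero constant term $(-\lambda)^k$ forces coprimeness, which is exactly the justification needed.
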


\begin{proof}
Note that $(ab-\lambda)^k$ and $(ba-\lambda)^k$ are stably associated for every $k\in\N$ and $\lambda\in\kk\setminus\{0\}$, due to the comaximal relation $(ab-\lambda)^k a=a(ba-\lambda)^k$. If $\rk\big(a(\uX)b(\uX)\big)^k=\rk \big(b(\uX)a(\uX)\big)^k$ for all $\uX$ and $k\in\N$, then $\rk ((ab)(\uX)-\lambda I)^{k}=\rk ((ba)(\uX)-\lambda I)^{k}$ for all $\lambda\in\kk$, $k\in\N$ and matrix tuples $\uX$. Hence, the Jordan canonical form shows that $ab$ and $ba$ are pointwise similar, so $ab=ba$ by Theorem \ref{t:similar}.
\end{proof}

\section{Norm and singular value equivalence}\label{sec:real}

Throughout this section, let $\kk=\C$. The final main result of this paper (Theorem \ref{t:realmix}) characterizes pairs of polynomials that have pointwise equal norms. Investigating this norm equivalence leads one to look at a suitable real structure on the free algebra. 
In addition to $\cx$, we consider its extension, the free $*$-algebra $\cxs$ generated by $x_1,\dots,x_n,x_1^*,\dots,x_n^*$, and endowed the antilinear involution $*$ determined by $x_j\leftrightarrow x_j^*$. Note that $f^*(\uX,\uX^*)=\left(f(\uX,\uX^*)\right)^*$ for all $\uX\in\mtxc{k}^n$.

The following is a well-known fact in complex analysis, which we record for convenience.

\begin{lem}\label{l:classic}
Let $\cO\subseteq\C^n$ be an Euclidean open set, and let $\cD$ be an analytically dense subset of $\cO$. If $\phi,\psi$ are nonzero analytic functions on $\cO$ such that $|\phi(\underline{z})|=|\psi(\underline{z})|$, then there exists $\zeta\in\C$ with $|\zeta|=1$ such that $\psi=\zeta \phi$.
\end{lem}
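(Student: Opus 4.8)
The plan is to first promote the modulus identity from the dense set $\cD$ to all of $\cO$, and then to exploit the classical fact that a holomorphic function of constant modulus on a domain must be constant. Throughout I would assume $\cO$ is connected (otherwise one runs the argument on each connected component; the unimodular constant $\zeta$ is then determined componentwise, and the statement as used below only needs the connected case).

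First I would observe that since $\cD$ is analytically dense it is in particular dense in $\cO$, while $\underline z \mapsto |\phi(\underline z)|^2 - |\psi(\underline z)|^2$ is a continuous function vanishing on $\cD$; hence it vanishes on all of $\cO$, so $|\phi(\underline z)| = |\psi(\underline z)|$ for every $\underline z \in \cO$. In particular the zero loci coincide, $Z := \{\phi = 0\} = \{\psi = 0\}$, and this is a \emph{proper} analytic subset of the domain $\cO$ because neither $\phi$ nor $\psi$ is identically zero. Consequently $U := \cO \setminus Z$ is again a domain (the complement of a proper analytic subset of a domain is connected), and it is dense in $\cO$.

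On $U$ the ratio $h := \psi/\phi$ is holomorphic and nowhere vanishing, and $|h(\underline z)| = |\psi(\underline z)|/|\phi(\underline z)| = 1$ at every point of $U$. A holomorphic function on a domain whose modulus is constant is constant: a nonconstant holomorphic function of several variables is an open map, so its image would be an open subset of $\C$ contained in a circle, which is impossible. Therefore $h \equiv \zeta$ for some $\zeta \in \C$ with $|\zeta| = 1$, i.e.\ $\psi = \zeta \phi$ on the dense open set $U$, and hence on all of $\cO$ by continuity. I do not anticipate a genuine obstacle here: the only points needing care are the elementary topological facts (that ``analytically dense'' entails ordinary density, and that removing a proper analytic subset from a domain leaves a domain), while the open mapping theorem does the real work.
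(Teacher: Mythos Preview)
Your overall strategy matches the paper's: form the ratio $h=\psi/\phi$ off the zero set of $\phi$, establish $|h|\equiv 1$ there, and conclude that $h$ is constant (you cite the open mapping theorem where the paper cites the maximum modulus principle; these are interchangeable here).

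The gap is in your first step. You assert that analytic density entails ordinary (Euclidean) density, and then extend $|\phi|=|\psi|$ from $\cD$ to all of $\cO$ by continuity. Under the standard meaning of ``analytically dense'' (a uniqueness set for holomorphic functions), this implication is false: $\R^n$ is analytically dense in any connected open subset of $\C^n$ that meets it, yet it has empty Euclidean interior. This is precisely the situation in the lemma's sole application (Proposition~\ref{p:radius}), where $\cD$ is the set of hermitian matrix tuples inside a Euclidean open set of complex matrix tuples---a totally real submanifold of half the ambient real dimension. So the continuity route does not cover the case the lemma is designed for, and your parenthetical that this is an ``elementary topological fact'' is where the argument actually breaks.

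The paper does not go through Euclidean density: it forms the ratio first, notes $|h|=1$ on $\cD\cap(\cO\setminus V)$, and then passes directly to $|h|=1$ on all of $\cO\setminus V$ by invoking the analytic density of $\cD\cap(\cO\setminus V)$ there, before applying the maximum modulus principle. Whatever ``analytically dense'' is taken to mean in this paper, the extension of the modulus identity must be argued through that notion rather than through topological density and continuity.
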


\begin{proof}
Let $V=\{\phi=0\}\subset\cO$. Then $\frac{\psi}{\phi}$ is analytic on the open set $\cO\setminus V$, and $|\frac{\psi}{\phi}|=1$ on $\cD\cap(\cO\setminus V)$. Since $\cD\cap(\cO\setminus V)$ is analytically dense in $\cO\setminus V$, it follows that $|\frac{\psi}{\phi}|=1$ on $\cO\setminus V$. Since the latter is open, the maximum modulus principle implies that $\frac{\psi}{\phi}$ is constant, so $\psi=\zeta\phi$ for some $\zeta\in\C$ with $|\zeta|=1$.
\end{proof}

The main message of the following proposition is that noncommutative polynomials with pointwise equal spectral radii are isospectral up to scaling by a constant of modulus 1. For a later use, we establish a slightly stronger statement, where $\her{k}$ denotes the real space of hermitian $k\times k$ matrices.

\begin{prop}\label{p:radius}
Let $f,g\in\cx$. Assume that for every $k\in\N$ and $\uX\in\her{k}^n$, $f(\uX)$ and $g(\uX)$ have the same spectral radius. Then there exists $\zeta\in\C$ with $|\zeta|=1$ such that $\zeta f$ and $g$ are isospectral.
\end{prop}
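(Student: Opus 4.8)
The plan is to combine the maximum modulus principle (Lemma \ref{l:classic}) with the eigenvalue‑coincidence characterization of isospectrality (Theorem \ref{t:isospec}(i)). First, dispose of the degenerate case: if $f$ is a constant $c$, then $\rho(g(\uX))\equiv|c|$ forces every coefficient of the characteristic polynomial of $g(\uX)$ to be a bounded polynomial function of $\uX$, hence constant, so $g-g(0)$ is pointwise nilpotent and therefore $g\in\kk$; one then takes $\zeta=g(0)/c$ (and symmetrically if $g$ is constant). So assume from now on that $f$ and $g$ are non-constant.

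The core is a genericity statement: for every non-constant $h\in\cx$ and every sufficiently large $k$, the set
\[
\cV_k(h)=\bigl\{\uX\in\mtxc{k}^n:\ h(\uX)\text{ is invertible with }k\text{ distinct eigenvalues, exactly one of maximal modulus}\bigr\}
\]
meets $\her{k}^n$. I would start from the Bertini‑type fact (cf. \cite[Corollary 2.10]{BreVol}) that for $k\gg0$ the locus where $h(\uX)$ has $k$ distinct eigenvalues is a nonempty Zariski‑open subset of $\mtxc{k}^n$, hence meets $\her{k}^n$ in a Zariski‑dense subset; then, starting from a hermitian tuple with $h$ invertible and with distinct eigenvalues, a first‑order perturbation along a generic hermitian direction separates the moduli of all eigenvalues (the derivatives of distinct simple eigenvalues being generically distinct linear functionals of the direction, since $h$ is non-constant), producing a hermitian tuple in $\cV_k(h)$. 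Applying this to $f$ and $g$ and intersecting, one gets, for all large $k$, a hermitian $\uX_0$ at which both $f$ and $g$ have a simple, strictly dominant (in particular nonzero) eigenvalue; near $\uX_0$ the dominant eigenvalues $\mu_f(\uX)$ and $\mu_g(\uX)$ are holomorphic and nonvanishing on a Euclidean‑open $\cO\subseteq\mtxc{k}^n$, and $|\mu_f|=\rho(f(\cdot))=\rho(g(\cdot))=|\mu_g|$ on $\cO\cap\her{k}^n$. As $\cO\cap\her{k}^n$ is analytically dense in $\cO$ (a real form is a set of uniqueness for holomorphic functions), Lemma \ref{l:classic} produces a unimodular $\zeta_k$ with $\mu_g=\zeta_k\mu_f$ on $\cO$; since $\mu_f(\uX)\in\sigma(f(\uX))$ and $\zeta_k\mu_f(\uX)=\mu_g(\uX)\in\sigma(g(\uX))$, the matrices $\zeta_k f(\uX)$ and $g(\uX)$ share an eigenvalue on the Zariski‑dense set $\cO$, hence — this being a Zariski‑closed condition — for every $\uX\in\mtxc{k}^n$.

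It remains to make $\zeta_k$ independent of $k$. Let $Z_k$ be the set of unimodular $\zeta$ for which $\zeta f(\uX)$ and $g(\uX)$ share an eigenvalue for every $\uX\in\mtxc{k}^n$. Evaluating at a tuple with $f$ invertible confines $Z_k$ to a finite set, and it is nonempty for all large $k$ by the previous paragraph. Using $f(\uX\oplus\uY)=f(\uX)\oplus f(\uY)$ one sees at once that $Z_k\cup Z_{k'}\subseteq Z_{k+k'}$; conversely, if $\zeta\notin Z_k\cup Z_{k'}$, then for a generic second summand $\uY$ the relevant spectra in the $\uX$‑ and $\uY$‑blocks and the cross terms are all disjoint, so $\zeta\notin Z_{k+k'}$. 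Hence $Z_{k+k'}=Z_k\cup Z_{k'}$, so $Z_{2^m k}=Z_k$ for all $m$. Fixing a large $k$ and $\zeta\in Z_k$, the polynomials $\zeta f(\uX)$ and $g(\uX)$ share an eigenvalue for every $\uX$ of size $2^m k$, $m\in\N$, and Theorem \ref{t:isospec}(i) gives that $\zeta f$ and $g$ are isospectral.

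The main obstacle is the genericity statement: guaranteeing, for infinitely many dimensions $k$, a \emph{hermitian} tuple at which $f$ (and $g$) has a unique eigenvalue of maximal modulus. Distinct eigenvalues is routine, but uniqueness of the maximal‑modulus eigenvalue can fail identically in small dimensions — e.g. $x_1x_2-x_2x_1$ evaluates on $2\times2$ hermitian matrices to traceless skew‑hermitian matrices, whose eigenvalues are $\pm it$ — so one must both exclude finitely many $k$ and carry out the perturbation inside the real locus $\her{k}^n$ rather than in $\mtxc{k}^n$.
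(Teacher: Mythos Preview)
Your overall architecture matches the paper's proof: distinct‑eigenvalue genericity via \cite[Corollary 2.10]{BreVol}, holomorphic eigenvalue branches, Lemma~\ref{l:classic} on the hermitian slice to produce a unimodular $\zeta_k$ at each level, then a uniformization of $\zeta_k$ followed by Theorem~\ref{t:isospec}(i). Two points deserve correction.

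\textbf{The ``main obstacle'' is self‑inflicted.} You do not need a \emph{strictly} dominant eigenvalue on hermitian tuples. On a small simply‑connected $\cO\subset\cO_k$ the eigenvalues of $f$ and $g$ are holomorphic functions $\lambda_1^f,\dots,\lambda_k^f,\lambda_1^g,\dots,\lambda_k^g$. The hermitian slice $\cO\cap\her{k}^n$ is covered by the finitely many closed sets $\{|\lambda_i^f|=\rho(f(\cdot)),\ |\lambda_j^g|=\rho(g(\cdot))\}$, so one of them has nonempty interior in $\cO\cap\her{k}^n$; shrinking $\cO$ accordingly, you get branches $\lambda_k:=\lambda_i^f$ and $\mu_k:=\lambda_j^g$ with $|\lambda_k|=|\mu_k|$ on the (analytically dense) hermitian slice, and Lemma~\ref{l:classic} applies. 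This is exactly what the paper does, without ever asserting uniqueness of the maximal‑modulus eigenvalue; your perturbation argument and the worry about the commutator example are unnecessary.

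\textbf{The inclusion $Z_k\cup Z_{k'}\subseteq Z_{k+k'}$ is not justified.} Membership in $Z_{k+k'}$ is a statement about \emph{all} $(k+k')\times(k+k')$ tuples, and block‑diagonal tuples are not Zariski‑dense (a generic tuple has no nontrivial common invariant subspace), so the direct‑sum identity $f(\uX\oplus\uY)=f(\uX)\oplus f(\uY)$ says nothing about general $\uW$. The reverse inclusion $Z_{k+k'}\subseteq Z_k\cup Z_{k'}$, which you do prove correctly, is all you need: it gives a decreasing chain $Z_k\supseteq Z_{2k}\supseteq Z_{4k}\supseteq\cdots$ of finite nonempty sets, which stabilizes, and any $\zeta$ in the stable value lies in $Z_{2^mk}$ for all large $m$. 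The paper takes a cleaner route: it first upgrades ``share an eigenvalue on a dense set at level $k$'' to ``spectra coincide for all $\uX\in\mtxc{k}^n$'' by invoking the argument of Theorem~\ref{t:isospec}(i)$\Rightarrow$(ii) (irreducibility of the generic characteristic polynomial over the center of $\mathrm{UD}_k$). With full spectral equality in hand, the direct‑sum trick gives $\zeta_m\,\sigma(f(\uX))=\zeta_{\ell m}\,\sigma(f(\uX))$ for every $\ell$, forcing $\{\zeta_{\ell m}\}_\ell$ into a finite set and yielding a single $\zeta$ hit infinitely often.
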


\begin{proof}
The statement is clear if either $f$ or $g$ is constant; thus, we assume that $f,g\in\cx\setminus\C$.
Let $\cO_k$ denote the set of $\uX\in\mtxc{k}^n$ such that $f(\uX)$ and $g(\uX)$ each have $n$ pairwise distinct eigenvalues. By \cite[Corollary 2.10]{BreVol}, there is a cofinite $K\subseteq\N$ such that for all $k\in K$, the set $\cO_k$ is nonempty and Zariski open in $\mtxc{k}^n$. 
For $k\in K$ and $\uX\in\cO_k$, the eigenvalues of $f(\uX),g(\uX)$ can be viewed as algebraic functions $\cO_k\to\C$. 
Then there exists a Euclidean open subset $\cO_k'\subset\cO_k$ and holomorphic functions $\lambda_k,\mu_k$ on $\cO'_k$ such that $\lambda_k(\uX)$ (resp. $\mu_k(\uX)$) is a maximal (by absolute value) eigenvalue of $f(\uX)$ (resp. $g(\uX)$), for all $\uX\in\cO'_k$.
By assumption, $|\lambda_k|=|\mu_k|$ on $\cO'_k\cap\her{k}^n$, which is analytically dense in $\cO'_k$.
By Lemma \ref{l:classic}, there exists $\zeta_k\in\C$ with $|\zeta_k|=1$ such that $\mu_k=\zeta_k\lambda_k$. That is, $\zeta_k f(\uX)$ and $g(\uX)$ share an eigenvalue for all $\uX$ in a Zariski dense subset of $\mtxc{k}^n$.
As in the proof of Theorem \ref{t:isospec}(i)$\Rightarrow$(ii), we see that
the spectra of $\zeta_k f(\uX)$ and $g(\uX)$ coincide for every $\uX\in\mtxc{k}^n$.
Now fix some $m\in K$ and $\uX\in\mtxc{m}^n$ with $f(\uX)\neq\{0\}$ (e.g., $\uX\in\cO'_m$). 
Then, the spectrum of $f(\uX)$ satisfies
$$\zeta_m\cdot\sigma\big(f(\uX)\big)
=\sigma\big(g(\uX)\big)=\sigma\big(g(\uX^{\oplus\ell})\big)
=\zeta_{\ell m}\cdot\sigma\big(f(\uX^{\oplus\ell})\big)
=\zeta_{\ell m}\cdot\sigma\big(f(\uX)\big)
$$
for all $\ell\in\N$. Thus, there is $\zeta\in\C$ with $|\zeta|=1$ such that $\zeta=\zeta_k$ for infinitely many $k\in K$. Hence, $\zeta f$ and $g$ are isospectral.
\end{proof}

The following theorem combines the results on isospectrality from Section \ref{sec:eig} with the tracial Nullstellensatz for $\cxs$ from \cite{BreKlep-tracial}.

\begin{thm}\label{t:realmix}
The following are equivalent for $f,g\in\cx$:
\begin{enumerate}[(i)]
\item $f$ and $g$ have pointwise the same Frobenius norm;
\item $f$ and $g$ have pointwise the same operator norm;
\item $f$ and $g$ have pointwise the same singular values;
\item $g=\zeta f$ for some $\zeta\in\C$ with $|\zeta|=1$.
\end{enumerate}
\end{thm}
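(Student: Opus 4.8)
The plan is to push all three metric conditions through the self-adjoint element $f^*f\in\cxs$ and then read off $g=\zeta f$ at the level of coefficients. The implications (iv)$\Rightarrow$(iii)$\Rightarrow$(i) and (iii)$\Rightarrow$(ii) are immediate, since $\zeta f(\uX)$ and $f(\uX)$ always have the same singular values, and $\|\cdot\|_F$ and $\|\cdot\|$ are symmetric functions of the singular values; so the content is (i)$\Rightarrow$(iv) and (ii)$\Rightarrow$(iv). The bridge is the identity $f(\uX)^*f(\uX)=(f^*f)(\uX,\uX^*)$ for $\uX\in\mtxc{k}^n$ (with $\uX^*=(X_1^*,\dots,X_n^*)$): the squared singular values of $f(\uX)$ are exactly the eigenvalues of the positive semidefinite matrix $(f^*f)(\uX,\uX^*)$, so $\|f(\uX)\|_F^2=\tr\big((f^*f)(\uX,\uX^*)\big)$ and $\|f(\uX)\|^2=\rho\big((f^*f)(\uX,\uX^*)\big)$, the spectral radius.

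\textbf{Frobenius norm.} Here the hypothesis reads $\tr\big((f^*f-g^*g)(\uX,\uX^*)\big)=0$ for all $\uX$, so by the tracial Nullstellensatz \cite{BreKlep-tracial} the polynomials $f^*f$ and $g^*g$ are cyclically equivalent, i.e.\ $f^*f-g^*g$ lies in the linear span of commutators in $\cxs$. I would grade $\cxs$ by bidegree (number of unstarred versus starred letters); cyclic equivalence preserves bidegree. Every monomial of $f^*f$ has the form $v^*w$ with $v,w$ words in the unstarred variables, and the key combinatorial observation is that $v^*w$ is the \emph{unique} monomial of this shape in its cyclic-equivalence class (the starred letters occupy a fixed cyclic arc of positions, invariant only under the trivial rotation). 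Comparing the coefficient of $v^*w$ on the two sides, for each fixed pair of lengths $k,\ell\ge1$ one gets $\overline{a_v}a_w=\overline{b_v}b_w$ for all $|v|=k$, $|w|=\ell$, where $a_w,b_w$ are the coefficients of $w$ in $f,g$; fixing a top-degree word $w_0$ with $a_{w_0}\neq0$ propagates a single $\mu$ with $|\mu|=1$ through all positive-degree homogeneous components, so $g_m=\mu f_m$ for $m\ge1$. The remaining bidegrees $(\ell,0)$ should then pin down $g(0)=\mu f(0)$, giving $g=\mu f$.

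\textbf{Operator norm.} First restrict to Hermitian tuples $\uX\in\her{k}^n$: there $(f^*f)(\uX,\uX^*)=h(\uX)$ where $h:=(f^*f)|_{x_j^*\mapsto x_j}\in\cx$ is self-adjoint with $h(\uX)\succeq0$ (similarly $h'$ for $g$), so (ii) says $h(\uX)$ and $h'(\uX)$ have equal spectral radius on all Hermitian tuples. Proposition \ref{p:radius} then produces $|\zeta|=1$ with $\zeta h$ and $h'$ isospectral, and evaluating at a Hermitian tuple where $h\neq0$ (hence has a positive eigenvalue) forces $\zeta=1$, so $h$ and $h'$ are isospectral. Since isospectrality of $h,h'$ alone is too weak, I would then reintroduce the hypothesis on general (non-Hermitian) tuples: on a dense open subset of $\mtxc{k}^n$ the largest singular value of $f(\uX)$ is a simple eigenvalue of $(f^*f)(\uX,\uX^*)$, and tracking the associated left/right singular vectors against the action of $g$, in the spirit of the proof of Theorem \ref{t:isospec}(i)$\Rightarrow$(ii) (but for singular rather than ordinary eigenvalues), should yield $g=\zeta f$.

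\textbf{Main obstacle.} In both cases the hard step is passing from averaged spectral data back to a coefficientwise identity. For the Frobenius norm, cyclic equivalence is blind to commutator summands of $f$, so controlling $g(0)$ against $f(0)$ — the bidegrees involving the constant term — is the genuinely delicate point: already for $f=c+[a,b]$ and $g=-c+[a,b]$ the traces of $f^*f$ and $g^*g$ agree, so extracting the scalar seems to require strictly more than the trace information, and this is the step I expect to be hardest (and where any residual hypotheses on $f,g$ must be used). For the operator norm, the difficulty is that the Hermitian restriction only recovers isospectrality of $h$ and $h'$, which is weaker than $g=\zeta f$; to close the gap one cannot invoke Lemma \ref{l:classic} directly, because the top singular value of $f(\uX)$ is merely a real-analytic — not holomorphic — function on $\mtxc{k}^n$, so the holomorphic structure has to be recovered by hand from the singular-vector data, which I expect to be the technical heart of the argument.
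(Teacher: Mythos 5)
Your overall strategy coincides with the paper's: for (i)$\Rightarrow$(iv) the paper also invokes the tracial Nullstellensatz of Bre\v{s}ar--Klep to get cyclic equivalence of $ff^*$ and $gg^*$ and then reads off the coefficients, and for (ii)$\Rightarrow$(iii) the paper also funnels the hypothesis through hermitian evaluations and Proposition~\ref{p:radius} (though it does so by substituting $x_j\mapsto y_j+iz_j$ with $\uy,\uz$ hermitian, which avoids the real-analytic difficulty you flag). So at the level of method, you are on the same track.

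However, the ``main obstacle'' you raised for (i)$\Rightarrow$(iv) is not merely a hard step: the example you suggested, $f=c+[a,b]$ versus $g=-c+[a,b]$, is a genuine counterexample to the statement. Take $f=1+xy-yx$ and $g=-1+xy-yx$. Writing $C=[X,Y]$, one has $f(X,Y)^*f(X,Y)=I+C+C^*+C^*C$ and $g(X,Y)^*g(X,Y)=I-C-C^*+C^*C$, so
\[
\|f(X,Y)\|_F^2=k+\|C\|_F^2=\|g(X,Y)\|_F^2
\]
for every $k$ and every $(X,Y)\in\mtxc{k}^2$ (using $\tr[X,Y]=0$). Yet $g\neq\zeta f$ for any $\zeta$, and (iii) also fails: for
\[
X=\begin{pmatrix}0&1&0\\0&0&1\\1&0&0\end{pmatrix},\qquad
Y=\begin{pmatrix}1&0&0\\0&2&0\\0&0&4\end{pmatrix},
\]
one computes $\det f(X,Y)=-5$ and $\det g(X,Y)=-7$, so $f(X,Y)$ and $g(X,Y)$ have different products of singular values. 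Thus (i) holds while (ii), (iii), (iv) all fail, and the equivalence as stated cannot be correct.

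The paper's own proof of (i)$\Rightarrow$(iv) has exactly the hole you anticipated. It observes that two words $u_1v_1^*$ and $u_2v_2^*$ with $u_j,v_j\in\mx\setminus\{1\}$ are cyclically equivalent only when $u_1=u_2$ and $v_1=v_2$, and then concludes $ff^*=gg^*$. But this observation \emph{excludes} the monomials $u\cdot 1^*$, $1\cdot v^*$, and $1$ coming from the constant term of $f$; for those the cyclic class of $u$ consists of \emph{all} cyclic rotations of $u$, and cyclic equivalence of $ff^*$ and $gg^*$ only constrains the \emph{cyclic sums} $\bar a_1\sum_{u'\sim u}a_{u'}$, which vanish identically for $f=1+xy-yx$. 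So the argument breaks exactly when $f$ has a nonzero constant term and the non-constant part has vanishing cyclic sums, which is your example. In short: your proposal's method matches the paper's, your instinct about the constant term is correct, and it uncovers a bug not only in your own attempt but in the statement and its proof in the paper. (For (ii)$\Rightarrow$(iii) the paper's $\uy+i\uz$ substitution is the right way to get around the holomorphicity issue you raised; but since the paper then routes (iii)$\Rightarrow$(i)$\Rightarrow$(iv), its argument for (ii)$\Rightarrow$(iv) is broken too, even though (ii)$\Rightarrow$(iv) is not contradicted by the example above.)
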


\begin{proof}
The implications (iv)$\Rightarrow$(iii)$\Rightarrow$(ii)\&(i) are clear.

(ii)$\Rightarrow$(iii): Let $\uy=(y_1,\dots,y_n)$ and $\uz=(z_1,\dots,z_n)$ be two tuples of freely noncommuting self-adjoint variables. Viewing $f,g\in\cx$ as elements in $\cxs$, we consider $F=f(\uy+i\uz)f^*(\uy+i\uz)$ and $G=g(\uy+i\uz)g^*(\uy+i\uz)$ in $\C\!\Langle \uy, \uz \Rangle$.
Since $f$ and $g$ have the same operator norm on $\mtxc{k}^n$, $F$ and $G$ have the same spectral radius on $\her{k}^{2n}$.
By Proposition \ref{p:radius}, there exists $\xi\in\C$ with $|\xi|=1$ such that $\zeta F$ and $G$ are isospectral. Furthermore, $\xi=1$ since $F$ and $G$ are positive semidefinite on $\her{k}^{2n}$. Thus, $f$ and $g$ have the same singular values on $\mtxc{k}^n$, for every $k\in\N$.

(i)$\Rightarrow$(iv):  
By assumption, $\tr(f(\uX)f(\uX)^*)=\tr(g(\uX)g(\uX)^*)$ for all $\uX$.
Consider $f f^*, g g^*\in\cxs$.
By \cite[Corollary 5.21]{BreKlep-tracial}, $ff^*$ and $gg^*$ are \emph{cyclically equivalent}, in the sense that one obtains $gg^*$ by cyclically rotating the words in the expansion of $ff^*$ \cite[Remark 5.5]{BreKlep-tracial}.
Note that all words in expansions of $ff^*$ and $gg^*$ are of the form $uv^*$ for $u,v\in\mx$. Observe that words $u_1v_1^*$ and $u_2v_2^*$ with $u_j,v_j\in\mx\setminus\{1\}$ are cyclically equivalent if and only if $u_1=v_1$ and $u_2=v_2$. Thus, cyclic equivalence of $ff^*$ and $gg^*$ implies that $ff^*=gg^*$. Since $f,g\in\cx$ and $f^*,g^*\in \C\!\Langle \ux^*\Rangle$, it follows that $g=\zeta f$ for some $\zeta\in\C$, and furthermore $\zeta\bar{\zeta}=1$ because of the relation $ff^*=gg^*$.
\end{proof}

Looking at noncommutative polynomials in free variables and their formal adjoints naturally leads to further pointwise equivalences. For example, while pointwise (unitarily) similar polynomials in $\cx$ are necessarily equal by Theorem \ref{t:similar}, there are non-trivial pairs of pointwise unitarily similar polynomials in $\cxs$, such as $xx^*$ and $x^*x$. Such pointwise equivalences on $\cxs$ have a distinct real flavor, and admit larger equivalence classes than their $*$-free analogs presented in earlier sections (for example, various nonconstant polynomials in $\cxs$ are rank-equivalent to 1, cf. \cite{KPV}). Their exploration calls for methods beyond the scope of this paper.

\bibliographystyle{abbrv}
\bibliography{ncpoly}

\end{document}